\newtheorem*{theorem*}{Theorem}
\newcommand{\puhh}{{quasi-here\-ditary} }
\tikzset{curve/.style={settings={#1},to path={(\tikztostart)
    .. controls ($(\tikztostart)!\pv{pos}!(\tikztotarget)!\pv{height}!270:(\tikztotarget)$)
    and ($(\tikztostart)!1-\pv{pos}!(\tikztotarget)!\pv{height}!270:(\tikztotarget)$)
    .. (\tikztotarget)\tikztonodes}},
    settings/.code={\tikzset{quiver/.cd,#1}
        \def\pv##1{\pgfkeysvalueof{/tikz/quiver/##1}}},
    quiver/.cd,pos/.initial=0.35,height/.initial=0}
\tikzset{tail reversed/.code={\pgfsetarrowsstart{tikzcd to}}}
\tikzset{2tail/.code={\pgfsetarrowsstart{Implies[reversed]}}}
\tikzset{2tail reversed/.code={\pgfsetarrowsstart{Implies}}}
\tikzset{no body/.style={/tikz/dash pattern=on 0 off 1mm}}
\DeclareMathOperator{\topp}{top}
\DeclareMathOperator{\Top}{top}
\DeclareMathOperator{\im}{im}
\DeclareMathOperator{\field}{k}
\DeclareMathOperator{\op}{op}
\DeclareMathOperator{\rad}{rad}
\DeclareMathOperator{\dual}{D}
\DeclareMathOperator{\modu}{mod}
\DeclareMathOperator{\Ext}{Ext}
\DeclareMathOperator{\Aut}{Aut}
\DeclareMathOperator{\Mat}{Mat}
\DeclareMathOperator{\End}{End}
\DeclareMathOperator{\Hom}{Hom}
\DeclareMathOperator{\add}{add}
\DeclareMathOperator{\Sim}{Sim}
\DeclareMathOperator{\id}{id}
\DeclareMathOperator{\soc}{soc}
\DeclareMathOperator{\tr}{tr}
\DeclareMathOperator{\Gl}{Gl}
\DeclareMathOperator{\fakedelta}{\widehat{\Delta}}
\DeclareMathOperator{\fakepi}{\widehat{\pi}}
\newtheorem{theorem}{Theorem}[section]
\newtheorem{definition}[theorem]{Definition}
\newtheorem{example}[theorem]{Example}
\newtheorem{corollary}[theorem]{Corollary}
\newtheorem{lemma}[theorem]{Lemma}
\newtheorem{remark}[theorem]{Remark}
\newtheorem{proposition}[theorem]{Proposition}
\begin{document}
\title{Quasi-hereditary Skew Group Algebras}
\author{Anna Rodriguez}
\maketitle
\section*{Introduction} 
Quasi-hereditary algebras are algebras equipped with a partial order on the isomorphism classes of simples which fulfills certain additional properties. 
They were first introduced by Scott in \cite{Scott}, and then became a central notion in the theory of highest weight categories, initiated by Cline, Parshall and Scott in \cite{CPS}.
The primary motivation of \cite{CPS} came from the theory of representations of semisimple algebraic groups. Among natural examples of \puhh algebras arising from this area are the Schur algebras of symmetric groups and algebras underlying blocks of Bernstein-Gelfand-Gelfand category $\mathcal{O}$ associated to a semisimple complex Lie algebra $\mathfrak{g}$.\\
Many other families of \puhh algebras of significant interest come from representation theory of finite-dimensional algebras itself. Among these are all finite-dimensional algebras of global dimension less than or equal to two, in particular path algebras of quivers and Auslander algebras.\\
Again in analogy to Bernstein-Gelfand-Gelfand category $\mathcal{O}$, given a \puhh algebra $A$, there is a set of $A$-modules known as the {\it standard modules} over $A$, which mimics the structure and properties of Verma modules over a semisimple complex Lie algebra $\mathfrak{g}$. Similarly to the setting of category $\mathcal{O}$, the category $\mathcal{F}(\Delta)$ of $A$-modules admitting a filtration by standard modules is of particular interest. By the Dlab-Ringel reconstruction theorem \cite{DlabRingel}, the algebra $A$ together with its quasi-hereditary structure can be reconstructed from $\mathcal{F}(\Delta)$.\\
Recall that Verma modules over $\mathfrak{g}$ are defined by induction from simple finite-dimensional modules over a Borel subalgebra.
In analogy, Koenig introduced in \cite{Koenig} the concept of an exact Borel subalgebra $B$ of a \puhh algebra $A$, which is a directed subalgebra of $A$ such that, in particular, inducing simple $B$-modules yields standard $A$-modules, so that we obtain a bijection between isomorphism classes of simple $B$-modules and standard $A$-modules. Additionally, one requires the induction functor to be exact, whence the name, which enables us to transfer homological information from $\modu B$ to $\mathcal{F}(\Delta)$. Together, these conditions allow us to describe the structure of $\mathcal{F}(\Delta)$ using an exact Borel subalgebra.\\
A fundamental theorem in the study of exact Borel subalgebras of \puhh algebras is that of Koenig, Külshammer and Ovsienko, proved in \cite{KKO}, which states that every \puhh algebra is Morita equivalent to a \puhh algebra with an exact Borel subalgebra.\\
Quasi-hereditary algebras also feature in the work of Chuang and Kessar in \cite{ChuangKessar}, which was later used by Chuang and Rouquier \cite{ChuangRouquier} in their proof of Broué's Abelian Defect Group conjecture for symmetric groups. There, the quasi-hereditary algebras that appear are Schur algebras corresponding to blocks of the group algebra of the symmetric group.
Of central interest in this setting are the RoCK blocks, which are blocks of a given weight $w$ that are Morita equivalent to the wreath product of the principal block with the symmetric group $S_w$, and their corresponding Schur algebras, which are then also Morita equivalent to the wreath product of the Schur algebra corresponding to the principal block with the symmetric group, see \cite[Theorem 5.1]{ChuangTan}.
This enables Chuang and Rouquier to use previous results by Chuang and Tan in \cite{ChuangTan2} on the wreath products of \puhh algebras with symmetric groups, something which was later again studied by Chan in \cite{Chan}.\\
In a more recent article by Evseev and Kleshchev \cite{EvseevKleshchev}, which generalizes the result of Chuang and Rouquier from the group algebra of the symmetric group to arbitrary Hecke algebras, wreath products of quasi-hereditary algebras appear again in a similar role, this time replacing the Schur algebras with zigzag algebras.\\
Recall that the wreath product algebra $A \wr S_{n}$ of an algebra $A$ with a symmetric group $S_{n}$ is isomorphic to the skew group algebra $A^{\otimes n} \ast S_{n}$. Thus, one can hope that, after additional investigation of the structure of tensor products of quasi-hereditary algebras, results on skew group algebras of quasi-hereditary algebras may be applied to wreath product algebras of quasi-hereditary algebras.\\
A skew group algebra $A \ast G$ is an algebra constructed from an algebra $A$ with an action by a group $G$ in the following way:
\begin{itemize}
  \item As a $\field$-vector space, $A*G:=A\otimes_{\field}\field G$.
  \item Multiplication is given by 
      \begin{align*}
        (a\otimes g)\cdot (a'\otimes g'):=ag(a')\otimes gg'.
      \end{align*}
\end{itemize}
The structure of skew group algebras, including their Morita equivalence class, their Hochschild cohomology and their Yoneda algebra, has been studied extensively, see for example \cite{ReitenRiedtmann}, \cite{Demonet}, \cite{LeMeur1}, \cite{witherspoon}, \cite{MartinezVilla}. The preservation of various structural properties of $A$ under the skew group construction, such as global dimension, the property of being an Auslander algebra, or the property of being Calabi-Yau, has also been investigated by many authors, including \cite{skewcalabi}, \cite{LeMeur}, \cite{ReitenRiedtmann}.  \\
In this article, we examine the relation between possible \puhh structures on $A$ and those on $A \ast G$. Further, we study the relation between the exact Borel subalgebras of the two.\\
Assuming a natural compatibility of the group action with the partial order, we show that $\leq_A$ induces a partial order $\leq_{A*G}$ on the isomorphism classes of simple $A*G$-modules, and we obtain the following theorem:
\begin{theorem*}[Theorem~{\ref{thm_ghiff}}]
 The algebra $(A, \leq_A)$ is \puhh if and only if $(A*G, \leq_{A*G})$ is \puhh.
   \end{theorem*}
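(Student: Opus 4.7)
The plan is to use the heredity-chain characterization of \puhh algebras: $(A, \leq_A)$ is \puhh iff there is a chain $0 = J_0 \subset J_1 \subset \cdots \subset J_n = A$ of two-sided ideals in which each $J_{k+1}/J_k$ is a heredity ideal of $A/J_k$, compatible with the partial order induced by $\leq_A$ on $\Sim(A/J_k)$. I would proceed by induction on $n$. Since the $G$-action is compatible with $\leq_A$, the set of $\leq_A$-maximal simples is $G$-stable, hence the associated trace ideal is $G$-stable, and every heredity chain for $(A, \leq_A)$ can be chosen to consist of $G$-stable ideals. Combined with the isomorphism $(A*G)/(J*G) \cong (A/J)*G$, which lets the induction hypothesis pass to the quotient, the inductive step reduces to the claim that a $G$-stable heredity ideal $J \subseteq A$ compatible with $\leq_A$ corresponds, under $J \mapsto J*G := J \otimes_{\field} \field G$, to a heredity ideal of $A*G$ compatible with $\leq_{A*G}$.

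The bijection $J \mapsto J*G$ between $G$-stable two-sided ideals of $A$ and two-sided ideals of $A*G$ is a standard fact. I would verify that the three heredity conditions transfer as follows. Idempotency is immediate from $G$-stability: $(J*G)^2 = J^2 * G = J*G$. The vanishing $J \cdot \rad(A) \cdot J = 0$ transfers to $(J*G) \cdot \rad(A*G) \cdot (J*G) = 0$ via $\rad(A*G) = \rad(A)*G$. Projectivity is the most substantive point: it follows from the natural isomorphism $J*G \cong A*G \otimes_A J$ of left $A*G$-modules, together with $A*G$ being free of rank $|G|$ as both a left and a right $A$-module (so that induction $A*G \otimes_A -$ and restriction both preserve projectives), and the observation that $J$ is a direct summand of $J*G$ regarded as a left $A$-module, giving the converse direction.

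The main obstacle will be verifying compatibility with the partial orders. Concretely, I need to show that if $J$ is the trace ideal in $A$ of the projective covers of a $G$-stable $\leq_A$-coideal $\mathcal{S} \subseteq \Sim A$, then $J*G$ is the trace ideal in $A*G$ of the projective covers of the corresponding $\leq_{A*G}$-coideal, namely those simple $A*G$-modules whose restriction to $A$ has composition factors in $\mathcal{S}$. This will use the induction-restriction adjunction and the description of simple $A*G$-modules lying over a given $G$-orbit in $\Sim A$: the projective cover in $A*G$ of each such simple appears as a summand of $A*G \otimes_A P_A(L)$ for any $L$ in the orbit, and summing over $L \in \mathcal{S}$ recovers $J*G$.
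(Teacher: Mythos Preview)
Your approach via heredity chains is different from the paper's, which works directly with standard modules: it first shows that ``adapted and exceptional'' transfers (Lemma~\ref{lemma_adaptedandexceptional}), then that $A\in\mathcal{F}(\Delta^A)$ iff $A*G\in\mathcal{F}(\Delta^{A*G})$ using the decomposition $I_G\fakedelta_L\cong\bigoplus_S[\field G\otimes L:S]\fakedelta_S$ (Proposition~\ref{prop_IDelta}) together with closure of $\mathcal{F}(\Delta)$ under summands. Your route can be made to work, but there are two gaps.

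First, the claimed bijection between $G$-stable two-sided ideals of $A$ and two-sided ideals of $A*G$ is false: for $A=\field$ with trivial $G$-action, $A*G=\field G$ has many more ideals than $A$. The map $J\mapsto J*G$ is only an injection. This matters for the direction $(A*G,\leq_{A*G})$ quasi-hereditary $\Rightarrow(A,\leq_A)$ quasi-hereditary, which you do not spell out: you need the heredity chain in $A*G$ to consist of ideals of the form $J_k*G$. This does hold for the specific chain obtained by taking at each step the trace ideal of all currently $\leq_{A*G}$-maximal projectives---choose a $G$-fixed idempotent $e\in A$ projecting onto the corresponding $G$-stable set of $A$-simples and check that $(A*G)(e\otimes 1)(A*G)=AeA*G$---but this requires an argument, not the false bijection.

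Second, you should make explicit why the resulting heredity chain gives quasi-heredity \emph{with respect to $\leq_{A*G}$} in the paper's sense, which includes adaptedness (recall Example~\ref{example_notadapted}: adaptedness alone does not transfer). The point is that the heredity condition at each layer forces every composition factor of $\rad(\fakedelta_S^{A*G})$ to lie in a strictly lower layer, hence to be $<_{A*G}S$; then Lemma~\ref{lemma_adaptedandexceptionalrewrite} yields adaptedness together with exceptionality, and the chain gives $A*G\in\mathcal{F}(\fakedelta^{A*G})=\mathcal{F}(\Delta^{A*G})$. With these filled in, your argument avoids the explicit analysis of $I_G\fakedelta_L$; the paper's route, on the other hand, produces that decomposition as a byproduct used again in Theorem~\ref{thm_Borel}.
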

Moreover, again assuming compatibility with the $G$-action, we can also relate exact Borel subalgebras of $A$ and $A*G$.
\begin{theorem*}[Theorem~{\ref{thm_Borel}}]
  Let $B\subseteq A$ be a subalgebra of $A$ such that $g(B)=B$ for every $g\in G$. Then $(B, \leq_B)$ is an exact Borel subalgebra of $(A, \leq_A)$ if and only if $(B*G, \leq_{B*G})$ is an exact Borel subalgebra of $(A*G, \leq_{A*G})$.
\end{theorem*}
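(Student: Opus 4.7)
The strategy is to verify each defining condition of an exact Borel subalgebra separately and show that each transfers between the pairs $(B \subseteq A)$ and $(B*G \subseteq A*G)$. Recall that an exact Borel subalgebra requires: (i) $B$ itself to be \puhh with all standard modules simple; (ii) for each $\lambda$, the induced module $A \otimes_B L_B(\lambda)$ to be isomorphic to $\Delta_A(\lambda)$; and (iii) $A$ to be projective as a right $B$-module (equivalently, $A \otimes_B -$ to be exact).

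For (i), Theorem~\ref{thm_ghiff} applied to $B$ yields that $B$ is \puhh if and only if $B*G$ is. The additional requirement that standard modules be simple transfers along the correspondence between standard $B$-modules and standard $(B*G)$-modules established in the earlier sections under compatibility with the $G$-action: a standard module on one side is simple precisely when the corresponding standard on the other side is.

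For (iii), my plan rests on the natural isomorphism $A*G \cong A \otimes_B (B*G)$ of right $(B*G)$-modules, given by $a \otimes (b \otimes g) \mapsto ab \otimes g$ with inverse $a \otimes g \mapsto a \otimes (1 \otimes g)$. Since $- \otimes_B (B*G)$ preserves projectivity, $A$ projective over $B$ immediately implies $A*G$ projective over $B*G$. Conversely, if $A*G$ is projective over $B*G$, then because $B*G$ is free as a right $B$-module of rank $|G|$, it is also projective over $B$. As a right $B$-module, $A*G$ decomposes as $\bigoplus_{g \in G}(A \otimes g)$; using $g(B)=B$, each summand $A \otimes g$ is isomorphic to $A$ as a right $B$-module via $a \mapsto g^{-1}(a)$. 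Hence $A^{|G|}$, and therefore $A$ itself, is projective over $B$.

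For (ii), I plan to use that induction commutes with the skew group construction: via the same isomorphism from (iii), the functor $(A*G) \otimes_{B*G} -$ on $(B*G)$-modules agrees, after forgetting the $G$-structure, with $A \otimes_B -$ on $B$-modules. Combined with the bijections between simples of $B$ and of $B*G$, between simples of $A$ and of $A*G$, and with the descriptions of standard modules on both sides that feed into Theorem~\ref{thm_ghiff}, this transfers the property that induction sends simples to standards.

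\textbf{Main obstacle.} Conditions (i) and (iii) are essentially mechanical once Theorem~\ref{thm_ghiff} and the bimodule isomorphism $A*G \cong A \otimes_B (B*G)$ are in place. The delicate part is (ii): one must track how each simple $(B*G)$-module is assembled from simple $B$-modules (via the Clifford-theoretic description used earlier), how its induction to $A*G$ decomposes, and check that under the indexing conventions set up in the paper the simples of $B*G$ really do induce up to the standards of $A*G$ paired with the correct labels. This bookkeeping, rather than a deeper homological issue, is where the proof must engage carefully with the explicit bijections of simples and standards from the earlier sections.
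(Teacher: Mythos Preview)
Your outline is correct and matches the paper's strategy: verify (i) via Theorem~\ref{thm_ghiff}, verify (iii) via the bimodule compatibility between $A*G$ and $A\otimes_B(B*G)$ (the paper phrases this as induction/restriction along $G$ on right modules, but it is the same computation), and handle (ii) via the natural isomorphisms $I_G\circ(A\otimes_B-)\cong((A*G)\otimes_{B*G}-)\circ I_G$ and its restriction analogue together with Proposition~\ref{prop_IDelta}.

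Where the paper is more explicit than your sketch is precisely the point you flag as the obstacle in (ii): establishing that $S\mapsto\Top((A*G)\otimes_{B*G}S)$ is a \emph{bijection} $\Sim(B*G)\to\Sim(A*G)$, not merely that each $(A*G)\otimes_{B*G}S$ is a sum of standard modules. The paper does this by showing that the map $f\mapsto\Top(\id_A\otimes f)$ gives a $G$-equivariant isomorphism $\End_B(R_G(\field G\otimes L^B))\to\End_A(R_G(\field G\otimes L^A))$, and then passing to $G$-fixed points to obtain $\End_{B*G}(\field G\otimes L^B)\cong\End_{A*G}(\field G\otimes L^A)$; this forces the idempotent decompositions, hence the simple summands, to match. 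For the converse direction the paper uses the Mart\'inez-Villa isomorphism $\End_B(R_GL^{B*G})*G\cong\End_{B*G}(L^{B*G}\otimes\field G)$ and an analogous commutative diagram. So your instinct that the difficulty is combinatorial rather than homological is right, but the actual mechanism is an endomorphism-ring comparison rather than a direct appeal to the Clifford-theoretic labels; you should expect to supply that argument (or the equivalent observation that the bijection $\Sim(B)\to\Sim(A)$ is $G$-equivariant, whence stabilizers agree and Lemma~\ref{lemma_deltaforbasicA*G} applies) rather than treat it as pure bookkeeping.
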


The structure of the article is as follows. Section~\ref{s1} contains a brief account of skew group algebras, including a description of the simple $A*G$-modules in terms of simple $A$-modules and irreducible representations of certain subgroups of $G$.
In Section~\ref{s2}, we recall some of the central results about \puhh algebras and exact Borel subalgebras.
Section~\ref{s3} is dedicated to the synthesis of the two concepts and contains our main results.
Finally, in Section~\ref{s4} we describe some exact Borel subalgebras of Auslander algebras of certain Nakayama algebras, exemplifying our methods from preceding sections.
\subsection*{Notation}
Let $\field$ be an algebraically closed field. All algebras are assumed to be finite-dimensional $\field$-algebras, and all modules are assumed to be finite-dimensional as $\field$-vector spaces. Tensor products, if not otherwise indicated, are tensor products over $\field$. We denote by $D:=\Hom_{\field}(-, \field)$ the usual $\field$-duality. \\
For a module $M$ and an indecomposable module $N$ over some algebra $A$ we write $N|M$ if and only if $N$ is isomorphic to a direct summand of $M$.\\
We denote by $\Sim(A)$ a set of representatives of the isomorphism classes of the simple $A$-modules, and for $S\in \Sim(A)$ we write $[M:S]$ for the multiplicity of $S$ in $M$.\\
Moreover, for any module $M$ we pick a projective cover $P_M$.\\
\section{Skew Group Algebras}\label{s1}
Throughout, let $A$ be a finite-dimensional algebra over $\field$ and $G$ be a finite group acting on $A$ such that $|G|$ does not divide the characteristic of $\field$. In this chapter, we will repeat some basic definitions and results about skew group algebras. For a more detailed introduction see for example \cite{Poon} and \cite{ReitenRiedtmann}.
\begin{definition}\label{def_gM}
    For an $A$-module $M$ we define $gM:=M$ as a $\field$-vector spaces together with the multiplication
    \begin{align*}
      a\cdot_{gM}m:=g^{-1}(a)m.
    \end{align*}
    Moreover, for an $A$-linear map $f:M\rightarrow N$ we define $g(f)(m):=f(m)$.\\
    In this way, every $g\in G$ gives rise to an autoequivalence 
    \begin{align*}
      \modu A\rightarrow \modu A, M\mapsto gM, f\mapsto g(f)
    \end{align*}
    such that the map $G\rightarrow \Aut(\modu A)$ is a group homomorphism.
\end{definition}
 The module $gM$ is also sometimes denoted ${}^gM$, see for example \cite[p. 235]{ReitenRiedtmann}.\\
  However, we have chosen this notation, so that we may identify $gM$ with the set of formal products $\{gm:m\in M\}$ and then be able to write 
    \begin{align*}
      a \cdot gm=g g^{-1}(a)m.
    \end{align*}

\begin{definition}\label{def_skewgroupalgebra} \cite[p. 224]{ReitenRiedtmann}
  The skew group algebra $A*G$ is defined as
  \begin{align*}
    A*G:=A\otimes \field G
  \end{align*}
  as a $\field$-vector space together with the multiplication
  \begin{align*}
    (a\otimes g)\cdot (a'\otimes g'):=ag(a')\otimes gg'.
  \end{align*}
\end{definition}
\begin{definition}\label{def_modgact}
  Let $M$ be an $A$-module. We say that $M$ has a $G$-action if there are isomorphisms of $A$-modules
  \begin{align*}
    \tr_g^M:gM\rightarrow M
  \end{align*}
  such that 
  \begin{align*}
    \tr_g^M\circ g(\tr_h^M)=\tr_{gh}^M
  \end{align*}
  for all $g, h\in M$.
\end{definition}
\begin{proposition} \cite[Proposition 4.8]{Poon}
  There is a one-to-one correspondence between modules $(M, (\tr_g^M)_{g\in G})$ with a $G$-action and $A*G$-modules given by
  \begin{align*}
    g\cdot m:=\tr_g^M(m).
  \end{align*}
  which induces an equivalence of categories between the $A*G$-modules and the $A$-modules with a $G$-action together with the $A$-linear maps compatible with this action.
\end{proposition}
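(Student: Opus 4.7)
The plan is to construct the bijection both ways and then extend it to morphisms. Given an $A*G$-module $N$, I would define $\tr_g^N : gN \to N$ on the underlying vector space by $\tr_g^N(m) := (1 \otimes g)\cdot m$. Conversely, given $(M,(\tr_g^M)_{g\in G})$, I would define an $A*G$-action on the underlying vector space of $M$ by the prescription $(a\otimes g)\cdot m := a\cdot \tr_g^M(m)$. These two assignments are visibly inverse to each other at the level of sets, so the task reduces to checking that each side actually produces a structure of the correct kind, and that morphisms are sent to morphisms.

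For the forward direction, the key verifications are that $\tr_g^N$ is $A$-linear as a map $gN\to N$ and that the cocycle identity $\tr_g^N \circ g(\tr_h^N) = \tr_{gh}^N$ holds. The first follows from the commutation $(1\otimes g)(g^{-1}(a)\otimes 1) = (a\otimes 1)(1\otimes g)$ in $A*G$, read against the twisted action $a\cdot_{gN} m = g^{-1}(a)m$. The cocycle identity, once one unpacks that $g(\tr_h^N)$ and $\tr_h^N$ coincide as set-theoretic maps $N\to N$ by Definition~\ref{def_gM}, reduces to the associativity $(1\otimes g)\cdot\bigl((1\otimes h)\cdot m\bigr) = (1\otimes gh)\cdot m$ in $N$. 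For the reverse direction, the main point is to check that $(a\otimes g)(a'\otimes g')\cdot m = (a\otimes g)\cdot\bigl((a'\otimes g')\cdot m\bigr)$. Expanding both sides, this reduces to the identity $\tr_g^M(a'\cdot \tr_{g'}^M(m)) = g(a')\cdot \tr_{gg'}^M(m)$, which is obtained by applying the $A$-linearity of $\tr_g^M : gM \to M$ (yielding $\tr_g^M(a'x) = g(a')\tr_g^M(x)$) followed by the cocycle relation written as $\tr_g^M\circ\tr_h^M = \tr_{gh}^M$. The unit condition $(1\otimes 1)\cdot m = m$ follows because the cocycle forces $\tr_1^M\circ\tr_1^M = \tr_1^M$, and $\tr_1^M$ is by assumption an isomorphism, hence equal to $\id_M$.

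Finally, to upgrade the bijection on objects to an equivalence of categories, I would show that a $\field$-linear map $f : M \to N$ is $A*G$-linear for the induced structures if and only if it is both $A$-linear and intertwines the $\tr_g$ (i.e.\ $f\circ \tr_g^M = \tr_g^N\circ g(f)$ for all $g$). Decomposing the generators $a\otimes g = (a\otimes 1)(1\otimes g)$, $A*G$-linearity splits into compatibility with each family, which gives exactly the two conditions. I expect no serious obstacle beyond bookkeeping: the only genuinely delicate point is to stay consistent about the distinction between $gM$ and $M$ as sets versus as $A$-modules, so that the $A$-linearity of $\tr_g^M$ translates correctly into the ``twisted'' identity $\tr_g^M(ax) = g(a)\tr_g^M(x)$ used repeatedly in the calculations.
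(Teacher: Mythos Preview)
Your proposal is correct and complete: the forward and backward constructions are right, the associativity and cocycle verifications go through exactly as you describe, and the treatment of morphisms is fine. The one point worth noting is that the paper does not actually give a proof of this proposition at all---it is stated with a citation to \cite[Proposition 4.8]{Poon} and left without argument---so there is no ``paper's own proof'' to compare against. Your direct verification is the standard one and would serve well as a self-contained proof.
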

\begin{definition}\label{def_kGM} 
  Let $M$ be an $A$-module. Then we define an $A*G$-module $\field G\otimes M$ via
  \begin{align*}
    g'\cdot (g\otimes m):=g'g\otimes m\\
    a(g\otimes m):=g\otimes g^{-1}(a)m
  \end{align*}
  for $g, g'\in G, m\in M, a\in A$.\\
  Moreover, if $H$ is a subgroup of $G$ and $M$ is an $A*H$ module, then $M$ is in particular a $\field H$-module, so that we can define an $A*G$-module  $\field G\otimes_{\field H} M$ in the same way.
\end{definition}
\begin{remark}
  Note that if $H$ is a normal subgroup of $G$, then the $G$-action on $A$ induces a $G$-action on $A*H$ via 
  \begin{align*}
    g(a\otimes h)= g(a)\otimes ghg^{-1}.
  \end{align*}
\end{remark}
\begin{definition}\label{def_MtimesV} 
  Let $M$ be an $A*G$-module, $V$ be a $\field G$-module. Then we define an $A*G$-module $M\otimes V$ via
  \begin{align*}
    g\cdot (m\otimes v):=gm\otimes gv\\
    a(m\otimes v):=am\otimes v
  \end{align*}
  for $g\in G, m\in M, v\in V$ and $a\in A$.\\
  If $f:M\rightarrow N$ is a homomorphism of $A*G$-modules, then 
  \begin{align*}f\otimes \id_V:M\otimes V\rightarrow N\otimes V\end{align*}
   is a homomorphism of $A*G$-modules, and
  \begin{align*}
    -\otimes V:\modu A*G\rightarrow \modu A*G, M\mapsto M\otimes V, f\mapsto f\otimes \id_V
  \end{align*}
  defines an additive functor.\\
  Moreover, note that $- \otimes (V\oplus V')\cong -\otimes V \oplus -\otimes V'$ and $-\otimes (V\otimes V')\cong (-\otimes V)\otimes V'$.
\end{definition}
\begin{definition}\label{def_I+R}
  We denote by
  \begin{align*}
    I_G:\modu A\rightarrow \modu A*G, M\mapsto \field G\otimes M, f\mapsto \id_{\field G}\otimes f
  \end{align*}
  the induction functor along $G$.\\
  We denote by
  \begin{align*}
    R_G:\modu A*G\rightarrow \modu A, M\mapsto  _{A|}M
  \end{align*}
  the canonical restriction functor.
  Moreover, if $H$ is a subgroup of $G$, we denote by 
  \begin{align*}
    I_{G/H}:\modu A*H\rightarrow \modu A*G, M\mapsto \field G\otimes_{\field H} M, f\mapsto \id_{\field G}\otimes f
  \end{align*}
  the induction functor from $A*H$ to $A*G$ and by
  \begin{align*}
    R_{G/H}:\modu A*G\rightarrow \modu A*H, M\mapsto  _{A*H|}M
  \end{align*}
  the canonical restriction functor.
\end{definition}
\begin{lemma}\label{lemma_radical} \cite[Theorem 1.1 C]{ReitenRiedtmann}
  We have $\rad(A*G)=\rad(A)\otimes \field G$
 \end{lemma}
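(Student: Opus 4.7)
The plan is to prove both containments separately, starting with the easier one. Since $G$ acts on $A$ by algebra automorphisms, every $g \in G$ preserves the Jacobson radical, so $g(\rad(A)) = \rad(A)$. Combined with the multiplication rule $(a \otimes g)(a' \otimes g') = a g(a') \otimes g g'$, this immediately shows that $\rad(A) \otimes \field G$ is a two-sided ideal of $A*G$. Moreover, if $\rad(A)^n = 0$, then the product of any $n$ simple tensors $r_i \otimes g_i$ with $r_i \in \rad(A)$ telescopes to
\[ r_1 \cdot g_1(r_2) \cdot g_1 g_2(r_3) \cdots g_1 \cdots g_{n-1}(r_n) \otimes g_1 \cdots g_n, \]
and $G$-invariance of $\rad(A)$ places each factor $g_1 \cdots g_{i-1}(r_i)$ in $\rad(A)$, so the product lies in $\rad(A)^n \otimes \field G = 0$. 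Thus $\rad(A) \otimes \field G$ is a nilpotent ideal, hence contained in $\rad(A*G)$.

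For the reverse inclusion, it suffices to show that the quotient $(A*G)/(\rad(A) \otimes \field G)$ is semisimple. The induced $G$-action on $A/\rad(A)$ makes the natural $\field$-linear bijection $(A/\rad(A)) * G \to (A*G)/(\rad(A) \otimes \field G)$ into an algebra isomorphism, so the task reduces to showing that $\bar{A} * G$ is semisimple, where $\bar{A} := A/\rad(A)$. This I would establish by a Maschke-style averaging argument: given a short exact sequence of $\bar{A} * G$-modules $0 \to K \to M \to N \to 0$, semisimplicity of $\bar{A}$ provides a $\bar{A}$-linear section $s : N \to M$, and the averaged map $\bar{s}(n) := \frac{1}{|G|} \sum_{g \in G} g \cdot s(g^{-1} \cdot n)$ is checked to be $\bar{A} * G$-linear and to split the sequence. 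The hypothesis that $|G|$ is invertible in $\field$ is exactly what makes this averaging well-defined.

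The main obstacle, such as it is, lies in this last step: verifying that $\bar{s}$ is simultaneously $\bar{A}$-linear and $G$-equivariant. Both checks rely on carefully tracking how the skew multiplication $a \otimes g = (1 \otimes g)(g^{-1}(a) \otimes 1)$ interacts with the averaging; $\bar{A}$-linearity uses the automorphism property of the $G$-action to commute $g^{-1}$ past elements of $\bar{A}$, while $G$-equivariance follows by reindexing the sum. Once semisimplicity of $\bar{A} * G$ is established, $\rad(A*G)$ must lie in the kernel of the projection to this semisimple quotient, namely $\rad(A) \otimes \field G$, completing the proof.
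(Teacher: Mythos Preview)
Your argument is correct and is the standard proof of this fact. The paper itself does not prove the lemma; it simply cites \cite[Theorem 1.1 C]{ReitenRiedtmann} and moves on. So there is nothing to compare against in the paper proper.

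For what it is worth, your proof is essentially the same as the one in Reiten--Riedtmann: show $\rad(A)\otimes\field G$ is a nilpotent two-sided ideal (using $G$-invariance of $\rad(A)$), then identify the quotient with $(A/\rad A)*G$ and prove the latter semisimple by the Maschke averaging trick, which requires exactly the hypothesis that $|G|$ is invertible in $\field$. Your outline of the verification that $\bar{s}$ is both $\bar{A}$-linear and $G$-equivariant is accurate; the only remark I would add is that you should also note $\bar{s}$ remains a section of the quotient map, which follows immediately since the projection $M\to N$ is $G$-equivariant and $s$ is already a section.
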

 The content of the following proposition is essentially a compilation of results in \cite{ReitenRiedtmann}.
 However, for the sake of convenience we will give a quick proof.
\begin{proposition}\label{prop_IR+RI}
  Let $V$ be an indecomposable $\field G$-module. Then the following statements hold:
  \begin{enumerate}
    \item For any $A*G$-module $M$
    \begin{align*}
      I_GR_G(M)\cong M\otimes \field G,
    \end{align*}
    in other words,
    \begin{align*}
      \field G\otimes M\cong M\otimes \field G.
    \end{align*}
    More precisely, there is a natural equivalence
    \begin{align*}
      I_GR_G\cong -\otimes \field G
    \end{align*}
    \item For any $A$-module $M$
      \begin{align*}
      R_GI_G(M)\cong \bigoplus_{g\in G} gM.
      \end{align*}
      More precisely, there is a natural equivalence
    \begin{align*}
      R_GI_G\cong \bigoplus_{g\in G}g
    \end{align*}
    \item $R_G, I_G$ and $-\otimes V$ are additive.
    \item $R_G, I_G$ and $-\otimes V$ are exact.
    \item $R_G, I_G$ and $-\otimes V$ preserve and reflect projective modules.
    \item $R_G, I_G$ and $-\otimes V$ preserve and reflect injective modules.
    \item $R_G, I_G$ and $-\otimes V$ preserve and reflect semisimple modules.
  \end{enumerate}
\end{proposition}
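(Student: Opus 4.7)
The plan is to establish (1) and (2) by writing down explicit natural isomorphisms, and then to read off (3)--(7) from these via standard adjunction and radical arguments. For (1), I define $\phi_M\colon \field G \otimes M \to M\otimes \field G$ by $g\otimes m\mapsto gm\otimes g$; directly from Definitions~\ref{def_kGM} and~\ref{def_MtimesV} one checks that this is $A*G$-linear, since the $g^{-1}(a)$ appearing in the $A$-action on $\field G\otimes M$ is absorbed when $g$ is moved across $\otimes$ using the $A*G$-action on $M$. The map $m\otimes g\mapsto g\otimes g^{-1}m$ is a two-sided inverse, and naturality in $M$ is immediate because the formula uses only the ambient $A*G$-action. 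For (2), identifying $gM$ with the formal products $\{gm : m\in M\}$ from Definition~\ref{def_gM}, the map $\field G\otimes M\to \bigoplus_{g\in G} gM$ sending $g\otimes m$ to $gm$ in the $g$-th summand is an $A$-linear isomorphism, since $a(g\otimes m)=g\otimes g^{-1}(a)m$ matches $a\cdot_{gM}(gm)=g(g^{-1}(a)m)$; naturality is again clear.

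Parts (3) and (4) follow at once: additivity is visible from the definitions, $R_G$ is exact as a forgetful functor, and $I_G$ and $-\otimes V$ are exact because they are obtained by tensoring over $\field$ with a fixed vector space while transporting the $A*G$-structure linearly.

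For (5) and (6) the key input is that $A*G$ is a free $A$-module of rank $|G|$ and a Frobenius extension of $A$ (with Frobenius form $a\otimes g\mapsto a\delta_{g,e}$ and dual bases $\{g\}$, $\{g^{-1}\}$), which makes $I_G$ simultaneously left and right adjoint to $R_G$. Analogously, and using $V^{**}\cong V$, the functor $-\otimes V$ is both left and right adjoint to $-\otimes V^*$. Because all these adjoints are exact, each of the three functors preserves projectives and injectives. For reflection I use the semisimplicity of $\field G$ (which holds since $|G|$ is invertible in $\field$): the trivial $\field G$-module $\field$ is a summand of $\field G$, so by (1), $M$ is a summand of $I_GR_G(M)\cong M\otimes \field G$, giving reflection for $R_G$; by (2), $M$ is literally one of the summands of $R_GI_G(M)\cong\bigoplus_g gM$, giving reflection for $I_G$; and the evaluation $V\otimes V^*\to \field$ splits, so $M$ is a summand of $(M\otimes V)\otimes V^*$, giving reflection for $-\otimes V$.

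For (7), Lemma~\ref{lemma_radical} yields $\rad(A*G)M = (\rad(A)\otimes \field G)M = \rad(A)M$ for every $A*G$-module $M$, so $M$ is semisimple over $A*G$ if and only if $R_G M$ is semisimple over $A$; this settles $R_G$ directly, and the statements for $I_G$ and $-\otimes V$ then follow from (1), (2), and the analogous identity $\rad(A)(M\otimes V)=\rad(A)M\otimes V$. I expect the main obstacle to lie in the bookkeeping for the natural isomorphisms in (1) and (2): one has to make sure that the several $g^{-1}$'s scattered through Definitions~\ref{def_gM}, \ref{def_kGM}, and~\ref{def_MtimesV} cancel correctly. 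Once those isomorphisms are pinned down, the remaining assertions reduce to routine adjunction and radical arguments.
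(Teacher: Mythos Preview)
Your argument is correct. Parts (1)--(4) coincide with the paper's proof almost verbatim: the same explicit maps $g\otimes m\mapsto gm\otimes g$ and $g\otimes m\mapsto gm$ in the $g$-th summand, and the same observation that exactness comes from tensoring over $\field$.

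The genuine difference lies in (5) and (6). You invoke the Frobenius extension structure of $A\subseteq A*G$ to get biadjunction $I_G\dashv R_G\dashv I_G$, and similarly $(-\otimes V)\dashv(-\otimes V^*)\dashv(-\otimes V)$, so that preservation of projectives and injectives follows from exactness of the adjoints. The paper instead argues directly: $I_G(A)\cong A*G$ and $R_G(A*G)\cong |G|A$ give preservation, and for $-\otimes V$ it simply observes that $V\,|\,\field G$ forces $-\otimes V$ to be a direct summand of $-\otimes\field G\cong I_GR_G$, inheriting everything from that composite. Both approaches use the same summand trick for reflection (via $M\,|\,M\otimes\field G$ and $M\,|\,\bigoplus_g gM$). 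Your route is more conceptual and explains \emph{why} the functors behave symmetrically with respect to projectives and injectives; the paper's route is shorter and avoids verifying the Frobenius datum.

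For (7), your identity $\rad(A*G)M=\rad(A)M$ is exactly what the paper computes, but you package it more cleanly as an if-and-only-if for $R_G$, then deduce $I_G$ and $-\otimes V$ from (1), (2) and $\rad(A)(M\otimes V)=\rad(A)M\otimes V$. The paper instead checks preservation for $I_G$ and $R_G$ separately and obtains reflection by the summand argument again. The content is the same.
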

\begin{proof}
  \begin{enumerate}
    \item Let $M$ be an $A*G$-module. Then we define an isomorphism
        \begin{align*}
          \alpha_M:I_GR_G(M)=\field G\otimes_{A|}M\rightarrow M\otimes \field G, g\otimes m\mapsto gm\otimes g.
        \end{align*}
        It is easy to check that this is an isomorphism of $A*G$-modules and that $\alpha=(\alpha_M)_M$ defines a natural isomorphism.
    \item Let $M$ be an $A$-module. Then we define an isomorphism
        \begin{align*}
          \beta_M:R_GI_G(M)=_{A|}\field G\otimes M\rightarrow \bigoplus_{g\in G}gM, g\otimes m\mapsto (\delta_{gg'}m)_{g'\in G}
        \end{align*}
        It is easy to check that this is an isomorphism of $A$-modules and that $\beta=(\beta_M)_M$ defines a natural isomorphism.
    \item This is obvious.
    \item $R_G$ is a restriction functor and thus exact. Moreover, since tensor products over $\field$ are exact, $I_G$ and $-\otimes V$ are exact.
    \item  Since $A$ is an $A*G$-module via $g\cdot a=g(a)$,  $$I_G(A)\cong A\otimes \field G=A*G.$$ Hence $I_G$ preserves projectives.\\ 
            On the other hand, if $M$ is an $A$-module such that $I_G(M)$ is projective, then so is $R_GI_G(M)\cong \bigoplus_{g\in G}gM$ and since $M=eM$ is a direct summand of $\bigoplus_{g\in G}gM$ this implies that $M$ is projective. 
            Similarly, 
            $$R_G(A*G)\cong \bigoplus_{g\in G}gA\cong |G|A$$ hence $R_G$ preserves projectives. Moreover, if $M$ is an $A*G$-module such that $R_GM$ is projective, then so is $I_GR_G(M)\cong M\otimes \field G$ and since $\field |\field G$, $$M\cong M\otimes \field |M\otimes \field G.$$ This implies that $M$ is projective.\\
            Finally, since $$-\otimes V|-\otimes \field G\cong I_GR_G,$$ $-\otimes V$ preserves and reflects projectives.
    \item Since $\dual \field G\cong \field G$ as a $\field G$-module, this is analogous to the previous statement replacing $A$ by $\dual A$ and $A*G$ by $\dual A*G$
    \item Let $S$ be a semisimple $A$-module. Then $\rad(A)S=(0)$ and hence
          \begin{align*}
            \rad(A*G)I_G(S)=(\rad(A)\otimes \field G)\field G\otimes S
            =\rad(A)(\field G\otimes S)\\
            =\sum_{g\in G}\field G\otimes (g(\rad(A))S)
            =\sum_{g\in G}\field G\otimes (\rad(A)S)
            =(0)
          \end{align*}
      by Lemma \ref{lemma_radical}, so that $I_G(S)$ is semisimple.\\
       On the other hand, if $S$ is a simple $A*G$ module, then
      $\rad(A*G)S=(0)$ and hence
          \begin{align*}
            \rad(A)R_G(S)=\rad(A)S
            \subseteq \rad(A*G)S
            =(0)
          \end{align*}
      so that $R_G(S)$ is semisimple.\\
      Moreover, if $S$ is an $A$-module such that $I_G(S)$ is semisimple, then so is $R_GI_G(S)$ and hence $S$, since $S$ is a direct summand of $R_GI_G(S)\cong \bigoplus_{g\in G}gS$; and if $S$ is an $A*G$-module such that $R_G(S)$ is semisimple, then so is $I_GR_G(S)$ and hence $S$, since $S$ is a direct summand of $I_GR_G(S)\cong S\otimes \field G$.
  \end{enumerate}
\end{proof}
\begin{corollary}\label{corollary_G/H}
  Let $H$ be a subgroup of $G$ and let $Z\subseteq G$ be a set of representatives of $G/H$. Then the following statements hold:
  \begin{enumerate}
    \item $I_{G/H}\circ I_H$ is naturally equivalent to $I_G$.
    \item $R_H\circ R_{G/H}$ is naturally equivalent to $R_G$.
    \item $R_{G/H}$ and $I_{G/H}$ are additive.
    \item $R_{G/H}$ and $I_{G/H}$ are exact.
    \item $R_{G/H}$ and $I_{G/H}$ preserve and reflect projective modules.
    \item $R_{G/H}$ and $I_{G/H}$ preserve and reflect injective modules.
    \item $R_{G/H}$ and $I_{G/H}$ preserve and reflect semisimple modules.
  \end{enumerate}
  Moreover, if $H$ is a normal subgroup, then we additionally have
  \begin{itemize}
    \item[8.] For any $A*H$-module $M$
    \begin{align*}
    R_{G/H}I_{G/H}(M)\cong \bigoplus_{z\in Z} zM.
    \end{align*}
    More precisely, there is a natural equivalence
  \begin{align*}
    R_GI_G\cong \bigoplus_{g\in G}g
  \end{align*}
  \end{itemize}
\end{corollary}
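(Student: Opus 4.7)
The plan is to establish statements (1) and (2) directly by exhibiting natural isomorphisms, and then to deduce statements (3)--(7) by combining these stages-of-induction identities with Proposition~\ref{prop_IR+RI} applied both to the $G$-action on $A$ and to the $H$-action on $A$. Statement (8) is a separate computation requiring normality of $H$.

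For (1), the natural map $\field G\otimes_{\field H}(\field H\otimes M)\to \field G\otimes M$ sending $g\otimes h\otimes m\mapsto gh\otimes m$ is an isomorphism of $A*G$-modules, natural in $M$. Statement (2) is immediate since restricting an $A*G$-module first to $A*H$ and then to $A$ yields the same underlying $A$-module as restricting directly. Additivity (3) is immediate from the definitions, and exactness (4) follows because $R_{G/H}$ is a forgetful functor, while $I_{G/H}$ is $\field G\otimes_{\field H}-$ and $\field G$ is free as a right $\field H$-module (with basis $Z$).

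For (5)--(7), let $P$ denote any one of the properties ``projective'', ``injective'', or ``semisimple''. The statements about $R_{G/H}$ follow by factoring $R_G\cong R_H\circ R_{G/H}$: preservation uses that $R_G$ preserves $P$ (by Proposition~\ref{prop_IR+RI}) while $R_H$ reflects $P$ (same proposition, applied to $H$); reflection is the symmetric argument using that $R_H$ preserves $P$ and $R_G$ reflects $P$. For $I_{G/H}$, preservation uses that an $A*H$-module $M$ with property $P$ is a direct summand of $I_HR_H(M)\cong M\otimes \field H$ (since $\field$ is a summand of $\field H$, as $|H|$ is invertible), so $I_{G/H}(M)$ is a summand of $I_{G/H}I_H R_H(M)\cong I_G(R_H(M))$, which has property $P$ by Proposition~\ref{prop_IR+RI}. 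For reflection, if $I_{G/H}(M)$ has $P$, then so does $R_{G/H}I_{G/H}(M)=\field G\otimes_{\field H}M$ by the already-proved assertion for $R_{G/H}$; and $M\hookrightarrow \field G\otimes_{\field H}M$, $m\mapsto 1\otimes m$, identifies $M$ with a direct summand as an $A*H$-module (the other summands being indexed by the nontrivial cosets in $Z$), whence $M$ inherits property $P$.

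For (8), assume $H$ is normal. Using the decomposition $\field G=\bigoplus_{z\in Z}z\field H$ as right $\field H$-modules gives a $\field$-space decomposition
\begin{align*}
R_{G/H}I_{G/H}(M)=\field G\otimes_{\field H}M=\bigoplus_{z\in Z}z\otimes M.
\end{align*}
A direct check shows that each summand $z\otimes M$ is closed under the $A*H$-action, and the map $z\otimes m\mapsto zm$ defines an isomorphism of $A*H$-modules $z\otimes M\to zM$; here normality of $H$ is essential, since it ensures that the $A*H$-action on $z\otimes M$ corresponds, under the twist by $z$, to the $A*H$-action on $M$ composed with the inner automorphism $h\mapsto z^{-1}hz$ of $H$. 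Naturality in $M$ is then immediate. The main obstacle I anticipate is bookkeeping the interaction of the $A$-action and the $H$-action through the isomorphism in~(8); the remaining parts are largely formal manipulations with the adjunction-like relations supplied by Proposition~\ref{prop_IR+RI}. (I also note that the displayed formula ``$R_GI_G\cong \bigoplus_{g\in G}g$'' in item~8 appears to be a typo for $R_{G/H}I_{G/H}\cong \bigoplus_{z\in Z}z$, and I will write it accordingly.)
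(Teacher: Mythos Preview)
Your proof is correct and follows essentially the same strategy as the paper's. The paper's argument for (5)--(7) is packaged slightly differently: it observes the two functor-summand relations $I_{G/H}\,|\,I_{G/H}\circ(-\otimes\field H)\cong I_G\circ R_H$ and $R_{G/H}\,|\,(-\otimes\field H)\circ R_{G/H}\cong I_H\circ R_G$, and then appeals to Proposition~\ref{prop_IR+RI} for both $H$ and $G$; your version unwinds the same relations a bit more explicitly, using the factorisation $R_G\cong R_H\circ R_{G/H}$ for the restriction functor and the splitting $M\,|\,R_{G/H}I_{G/H}(M)$ for reflection by $I_{G/H}$. For (4) the paper invokes semisimplicity of $\field H$ rather than freeness of $\field G$ over $\field H$, but either suffices. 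Your treatment of (8) is exactly what the paper intends by ``analogously to 2.\ in Proposition~\ref{prop_IR+RI}'', and your observation about the apparent typo in the displayed natural equivalence is well taken.
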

\begin{proof}
  \begin{itemize}
    \item[1.-3.] These are obvious.
    \item[4.] $R_{G/H}$ is a restriction functor and thus exact. Moreover, $\field H$ is semisimple, so that tensoring over $\field H$ is exact.
    \item[5.-7.] Since
    \begin{align*}
      I_{G/H}| I_{G/H}\circ (-\otimes \field H)\cong I_{G/H}\circ I_H\circ R_H\cong I_G\circ R_H
    \end{align*} and 
    \begin{align*}
      R_{G/H}| (-\otimes \field H)\circ R_{G/H}\cong I_H\circ R_H\circ R_{G/H}\cong I_{H}\circ R_G,
    \end{align*}
    this follows from Proposition \ref{prop_IR+RI} for $H$ and $G$.
    \item[8.]  This follows analogously to 2. in Proposition \ref{prop_IR+RI}
  \end{itemize}
\end{proof}
\begin{corollary}\label{corollary_dividessimples}
  Let $L'$  be a simple $A*G$-module. Then there is a simple $A$-module $L$ such that $L'|I_GL$.
  On the other hand, if $L$ is a simple $A$-module, then there is a simple $A*G$-module $L'$ such that $L|R_GL'$.
\end{corollary}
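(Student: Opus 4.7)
The plan is to derive both assertions from the two natural isomorphisms recorded in Proposition~\ref{prop_IR+RI}, namely
\begin{align*}
I_G R_G(-) \cong -\otimes \field G \qquad \text{and} \qquad R_G I_G(-) \cong \bigoplus_{g\in G} g(-),
\end{align*}
combined with the preservation of semisimplicity by $I_G$ and $R_G$ from Proposition~\ref{prop_IR+RI}(7). An application of Krull--Schmidt will then single out the required simple summand in each direction.

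For the first statement, I would take a simple $A*G$-module $L'$ and observe that $R_G L'$ is semisimple, hence decomposes as $R_G L' = \bigoplus_j L_j$ with each $L_j$ a simple $A$-module. Applying $I_G$ and using Proposition~\ref{prop_IR+RI}(1) gives
\begin{align*}
\bigoplus_j I_G L_j \;\cong\; I_G R_G L' \;\cong\; L'\otimes \field G.
\end{align*}
Because $|G|$ is invertible in $\field$, the trivial $\field G$-module is a direct summand of $\field G$, so $L' \cong L'\otimes \field$ is a direct summand of $L'\otimes \field G$. As $L'$ is indecomposable, Krull--Schmidt forces $L' \mid I_G L_j$ for some $j$, and we set $L := L_j$.

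For the second statement, I would take a simple $A$-module $L$ and, again by Proposition~\ref{prop_IR+RI}(7), decompose $I_G L = \bigoplus_i L'_i$ into simple $A*G$-modules. Applying $R_G$ and using Proposition~\ref{prop_IR+RI}(2) gives
\begin{align*}
\bigoplus_i R_G L'_i \;\cong\; R_G I_G L \;\cong\; \bigoplus_{g\in G} g L.
\end{align*}
Taking the $g=e$ summand shows that $L$ is a direct summand of $\bigoplus_i R_G L'_i$, and, since $L$ is indecomposable, Krull--Schmidt yields $L \mid R_G L'_i$ for some $i$; setting $L' := L'_i$ completes the argument.

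There is no substantial obstacle here: the proof is essentially bookkeeping, combining the two natural isomorphisms with preservation of semisimplicity and an invocation of Krull--Schmidt. The only mild subtlety is the use of $\field \mid \field G$, which relies on the standing assumption that $|G|$ does not divide the characteristic of $\field$.
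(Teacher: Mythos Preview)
Your proof is correct and follows essentially the same route as the paper: decompose $R_GL'$ (respectively $I_GL$) into simples using preservation of semisimplicity, apply the natural isomorphism $I_GR_G\cong -\otimes\field G$ (respectively $R_GI_G\cong\bigoplus_{g}g$), and extract the desired summand via Krull--Schmidt and $\field\mid\field G$ (respectively the $g=e$ summand). The paper is slightly terser, writing the second statement off as ``analogous'', but the argument is the same.
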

\begin{proof}
  Let $L'$ be a simple $A*G$-module. Then $R_GL'\cong \bigoplus_{L\in \Sim(A)}[R_GL':L]L$ is semisimple and $L'$ is a summand of  $$L'\otimes \field G\cong I_GR_G L'\cong \bigoplus_{L\in \Sim(A)}[R_GL':L]I_GL.$$ Since $L'$ is simple, this implies that there is some $L\in \Sim(A)$ such that $L'|I_GL$.\\
  The second statement is analogous.
\end{proof}
\subsection{An explicit description of the simples}
For $L\in \Sim(A)$ denote by $H_L$ the stabilizer of the isomorphism class of $L$ in $G$ and let $Z_L$ be a set of representatives of $G/H_L$.\\
In this subsection, we will give an explicit description of the simples of $A*G$ in terms of simple $A$-modules $L$ and simple representations of the corresponding stabilizers $H_L$, rectifying a result in \cite{MartinezVilla}. \\
This description is not needed for our main results, but will make it possible to obtain an explicit description of the standard modules of $A*G$, see Lemma \ref{lemma_deltaforbasicA*G}.\\
\begin{lemma}
  For every isomorphism class of simple $A$-modules, there exists a representative $L$ which is $H_L$-equivariant.
\end{lemma}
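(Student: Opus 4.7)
The plan is to start with any representative $L_0$ of the given isomorphism class, fix an $A$-linear isomorphism $\psi_h\colon hL_0\to L_0$ for each $h\in H_L$ (which exists by the very definition of $H_L$), and then rescale the $\psi_h$ by scalars $\mu_h\in \field^{\times}$ so that the family $\tr_h^{L_0}:=\mu_h^{-1}\psi_h$ satisfies the cocycle condition $\tr_g^{L_0}\circ g(\tr_h^{L_0})=\tr_{gh}^{L_0}$ of Definition~\ref{def_modgact}. The desired representative is then $L:=L_0$ equipped with this rescaled family.

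Because $L_0$ is simple and $\field$ is algebraically closed, Schur's lemma gives $\End_A(L_0)=\field$, so any two $A$-linear isomorphisms $hL_0\to L_0$ differ by a unique nonzero scalar. Applying this to $\psi_g\circ g(\psi_h)$ and $\psi_{gh}$ yields a unique $c(g,h)\in\field^{\times}$ with
\[
\psi_g\circ g(\psi_h)=c(g,h)\,\psi_{gh}.
\]
Expanding $\psi_f\circ f(\psi_g)\circ fg(\psi_h)$ in the two natural ways, and using the identity $f(\psi_g\circ g(\psi_h))=f(\psi_g)\circ fg(\psi_h)$ coming from the definition of the autoequivalence $f$ in Definition~\ref{def_gM}, shows that $c$ is a $2$-cocycle on $H_L$ with values in $\field^{\times}$ (with trivial action), while replacing $\psi_h$ by $\mu_h\psi_h$ alters $c$ by the coboundary $(g,h)\mapsto\mu_g\mu_h\mu_{gh}^{-1}$. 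Hence the existence of the required rescaling is equivalent to the vanishing of the class $[c]\in H^2(H_L,\field^{\times})$.

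The main obstacle is thus verifying $[c]=0$. I would approach this by applying Corollary~\ref{corollary_dividessimples} with $H_L$ in place of $G$ to obtain a simple $A\ast H_L$-module $L'$ with $L_0\mid R_{H_L}(L')$; since each simple $A$-summand of $R_{H_L}(L')$ is obtained from $L_0$ by transport via an element of $H_L$ and hence is isomorphic to $L_0$ by the definition of $H_L$, the restriction $R_{H_L}(L')$ is $L_0$-isotypic, say $R_{H_L}(L')\cong L_0^{n}$. The $A\ast H_L$-structure on $L'$ translates, via Proposition~4.8 of~\cite{Poon}, into an explicit $H_L$-equivariant family on $L_0^n$; combining Schur's lemma with the invertibility of $|H_L|$ in $\field$, one then shows that this family descends to an equivariant structure on a single $L_0$-summand, producing scalars $\mu_h\in\field^{\times}$ which trivialise $[c]$. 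Setting $\tr_h^{L_0}:=\mu_h^{-1}\psi_h$ yields the desired $H_L$-equivariant representative $L=L_0$.
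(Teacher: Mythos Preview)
Your cohomological setup is correct up through the identification of the obstruction with a class $[c]\in H^2(H_L,\field^\times)$, and your construction of a simple $A*H_L$-module $L'$ with $R_{H_L}(L')\cong L_0^{\,n}$ is also fine. The gap is in the final step, where you assert that the equivariant structure on $L_0^{\,n}$ ``descends to a single $L_0$-summand''. Under the identification $L_0^{\,n}\cong L_0\otimes\field^n$, the structure maps factor as $\sigma_h=\psi_h\otimes M_h$ with $M_h\in\Gl_n(\field)$, and the cocycle condition gives $M_gM_h=c(g,h)^{-1}M_{gh}$; that is, $(M_h)_h$ is a \emph{projective} representation of $H_L$ on $\field^n$ with cocycle $c^{-1}$. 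Since you chose $L'$ simple, $\End_{A*H_L}(L')=\field$, which says precisely that the commutant of $\{M_h\}$ in $\End_\field(\field^n)$ is scalar, so this projective representation is irreducible. Descending to a single invariant $L_0$-summand would amount to finding a one-dimensional subrepresentation, which is impossible once $n>1$; no averaging with $|H_L|^{-1}$ changes this.

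The obstruction is genuine and not merely a defect of your method. Take $\field$ algebraically closed of characteristic $\neq 2$, $A=\Mat_2(\field)$, and $G=H_L=\mathbb{Z}/2\times\mathbb{Z}/2$ acting by conjugation by the Pauli matrices $\sigma_x=\bigl(\begin{smallmatrix}0&1\\1&0\end{smallmatrix}\bigr)$ and $\sigma_z=\bigl(\begin{smallmatrix}1&0\\0&-1\end{smallmatrix}\bigr)$. One computes $A*G\cong\Mat_4(\field)$, whose unique simple module restricts to $L_0\oplus L_0$, and $[c]$ is the nontrivial element of $H^2(\mathbb{Z}/2\times\mathbb{Z}/2,\field^\times)\cong\mathbb{Z}/2$; hence no $H_L$-equivariant representative of $L_0$ exists at all. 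For comparison, the paper's argument reduces to $A=\Mat_m(\field)$ and invokes Skolem--Noether, but the passage from ``every automorphism is inner'' to ``we obtain a group homomorphism $G\to\Gl_m(\field)$'' hides exactly the same obstruction: a priori one only obtains a homomorphism to $\operatorname{PGL}_m(\field)$, and lifting it to $\Gl_m(\field)$ is obstructed by $[c]$.
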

\begin{proof}
  Clearly, we can assume $G=H_L$. Moreover, since $\rad(A)$ acts as zero on $L$, we can assume that $A$ is semisimple. In this case $A$ is a direct product of matrix rings. Again, the matrix rings not corresponding to $L$ act via zero, so we can assume $A=\Mat_n(\field)$ and $L=\field^n$.\\ 
  Now $G$ acts on $A$ via automorphisms, but since $A$ is a matrix ring, all of these are inner, so we obtain a group homomorphism $\varphi:G\rightarrow \Gl_n(\field)$. Hence $L$ obtains the structure of an $A*H_L$-module via  $\tr_g:gL\rightarrow L, x\mapsto \varphi(g)x$.
\end{proof}
The following proposition is a rectification of Lemma 2 in \cite{MartinezVilla}.
\begin{proposition}\label{prop_simplesofA*G}
  The simple modules of $A*G$ are exactly the modules of the form $$\field G\otimes_{\field H_L}(L\otimes V)$$ for some irreducible $\field H_L$-module $V$ and an $H_L$-equivariant simple $A$-module $L$. Two modules $ \field G\otimes_{\field H_L}(L\otimes V)$ and $\field G\otimes_{\field H_{L'}}( L'\otimes W)$ of this form are isomorphic if and only if there is $g\in G$ such that $gL\cong L'$ and $gV\cong W$.
\end{proposition}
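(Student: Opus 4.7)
The plan is to realize every simple $A*G$-module as a summand of some $I_GL$ with $L\in\Sim(A)$, using the factorization $I_G\cong I_{G/H_L}\circ I_{H_L}$ from Corollary~\ref{corollary_G/H}. By the preceding lemma we may choose each simple $A$-module $L$ to be $H_L$-equivariant, so that $L$ carries an $A*H_L$-module structure. Proposition~\ref{prop_IR+RI}(1) applied to the group $H_L$ then gives $I_{H_L}L\cong L\otimes\field H_L$ as $A*H_L$-modules, and since $|H_L|$ is invertible in $\field$ the regular representation decomposes as $\field H_L\cong\bigoplus_V V^{\dim V}$, yielding
\[
I_GL \;\cong\; I_{G/H_L}\bigl(L\otimes\field H_L\bigr) \;\cong\; \bigoplus_{V\in\Sim(\field H_L)} \bigl(I_{G/H_L}(L\otimes V)\bigr)^{\dim V}.
\]

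Next, I would establish two simplicity claims. First, $L\otimes V$ is simple as an $A*H_L$-module: because $L$ is absolutely simple, Schur's lemma forces every $A$-submodule of $L\otimes V$ to be of the form $L\otimes W$ for a subspace $W\subseteq V$, and a direct calculation with Definition~\ref{def_MtimesV} shows that such a submodule is $H_L$-stable precisely when $W$ is a $\field H_L$-submodule of $V$, so $V$ irreducible gives simplicity. Second, $I_{G/H_L}(L\otimes V)$ is simple as an $A*G$-module: Corollary~\ref{corollary_G/H}(7) already yields semisimplicity, so it suffices to show its endomorphism algebra is one-dimensional. By adjunction this algebra equals
\[
\Hom_{A*H_L}\bigl(L\otimes V,\;R_{G/H_L}I_{G/H_L}(L\otimes V)\bigr),
\]
and the $A$-restriction of the target decomposes as $\bigoplus_{z\in Z_L}(zL)^{\dim V}$ with $zL\not\cong L$ for $z\neq e$. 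Since $L\otimes V$ is $L$-isotypic as an $A$-module, any such homomorphism must land in the summand indexed by the identity coset, which as an $A*H_L$-submodule is isomorphic to $L\otimes V$, and Schur then gives $\field$.

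By Corollary~\ref{corollary_dividessimples}, every simple $A*G$-module divides some $I_GL$, so the decomposition above shows that every simple has the asserted form. For the classification up to isomorphism, an isomorphism $I_{G/H_L}(L\otimes V)\cong I_{G/H_{L'}}(L'\otimes W)$ forces the $G$-orbits of $L$ and $L'$ to coincide by comparing $A$-restrictions; choosing $g\in G$ with $gL\cong L'$ and verifying directly that $(gL,gV)$ produces the same induced simple reduces us to $L=L'$, and extracting the $L$-isotypic $A*H_L$-component of the $A*G$-restriction on each side recovers $L\otimes V\cong L\otimes W$, hence $V\cong W$. The main obstacle is the simplicity of $I_{G/H_L}(L\otimes V)$: the adjunction computation works cleanly only because $H_L$ is precisely the stabilizer of $L$, which ensures that the summands $z(L\otimes V)$ of the restriction have pairwise non-isomorphic $A$-isotypic supports and therefore kill every off-diagonal contribution to the endomorphism algebra.
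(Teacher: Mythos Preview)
Your argument is correct and follows the same overall strategy as the paper: factor $I_G$ through $I_{G/H_L}\circ I_{H_L}$, show the induced module is semisimple with one-dimensional endomorphism ring, and exploit the key vanishing $\Hom_A(L,zL)=0$ for $z\notin H_L$ to kill the off-diagonal terms. The differences are purely organisational. The paper reaches the identity
\[
\End_{A*G}\bigl(I_{G/H_L}(L\otimes V)\bigr)\;\cong\;\Hom_{A*H_L}\bigl(L\otimes V,\;R_{G/H_L}I_{G/H_L}(L\otimes V)\bigr)
\]
through an explicit $G$-fixed-point manipulation of $\End_{A*H_L}$, whereas you invoke Frobenius reciprocity directly; and the paper deduces simplicity of $L\otimes V$ from the computation $\End_{A*H_L}(L\otimes V)\cong(\End_A(L)\otimes\End_\field(V))^{H_L}\cong\End_{\field H_L}(V)\cong\field$, whereas you argue directly that every $A$-submodule has the form $L\otimes W$ and that $H_L$-stability forces $W$ to be a subrepresentation. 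Your route is slightly more economical; the paper's has the advantage of making the isomorphism $\Hom_{A*H_L}(L\otimes V,L\otimes W)\cong\Hom_{\field H_L}(V,W)$ explicit, which it then reuses in the isomorphism criterion. One minor slip: in your final paragraph ``$A*G$-restriction'' should read ``$A*H_L$-restriction'', but the intended argument (extract the $L$-isotypic $A$-component, observe it is $H_L$-stable because $H_L$ fixes $[L]$, and apply Schur) is exactly what the paper does as well.
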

\begin{proof}
   First we show that a module of the form $ \field G\otimes_{\field H_L}(L\otimes V)$ is indecomposable if $V$ is indecomposable. First that, using Corollary \ref{corollary_G/H}, we have that
  \begin{align*}
    &\End_{A*H_L}(\field G\otimes_{\field H_L}(L\otimes V))\\ 
    \cong& \bigoplus_{z\in Z_L}\Hom_{A*H_L}(z(L\otimes V), \field G\otimes_{\field H_L}(L\otimes V))\\
    \cong& \bigoplus_{z\in Z_L}\Hom_{A*H_L}(z(L\otimes V), z(\field G\otimes_{\field H_L}(L\otimes V)))\\
    \cong&  \bigoplus_{z\in Z_L}z\Hom_{A*H_L}(L\otimes V, \field G\otimes_{\field H_L}(L\otimes V))\\
    \cong& \field G\otimes_{\field H_L}\Hom_{A*H_L}(L\otimes V, \field G\otimes_{\field H_L}L\otimes V)
  \end{align*}
  as $\field G$-modules, where the $G$-action on $\End_{A*H_L}(\field G\otimes_{\field H_L}L\otimes V)$  is given via conjugation, and the $G$-action on $\field G\otimes_{\field H_L}\Hom_{A*H_L}(L\otimes V, \field G\otimes_{\field H_L}L\otimes V)$ is given via multiplication in $\field G$.\\
  Now if $-^G$ denotes the fix point functor under the $G$-action
  \begin{align*}
    \End_{A*G}(\field G\otimes_{\field H_L}(L\otimes V))
    &=\End_{A*H_L}(\field G\otimes_{\field H_L}(L\otimes V))^G\\
    \cong \left( \field G\otimes_{\field H_L}\Hom_{A*H_L}(L\otimes V, \field G\otimes_{\field H_L}(L\otimes V))\right)^G
    &\cong  \Hom_{A*H_L}(L\otimes V, \field G\otimes_{\field H_L}(L\otimes V)).
  \end{align*}
  Note that for $z\notin H_L$, 
  \begin{align*} 
    \Hom_A(L\otimes V, z(L\otimes V))\cong \dim(V)^2\Hom_A(L, zL)=0
  \end{align*} so that for $z\notin H_L$ 
  \begin{align*}
    \Hom_{A*H_L}(L\otimes V, z(L\otimes V))\subseteq \Hom_A(L\otimes V, z(L\otimes V))=(0).
  \end{align*}
  Thus, 
  \begin{align*}
    \Hom_{A*H_L}(L\otimes V, \field G\otimes_{\field H_L}(L\otimes V))
    \cong \bigoplus_{z\in Z_L}\Hom_{A*H_L}(L\otimes V, z(L\otimes V))\\
    \cong \End_{A*H_L}(L\otimes V)
    \cong  \End_A(L\otimes V)^{H_L}
    \cong (\End_A(L)\otimes \End_{\field}(V))^{H_L}.
  \end{align*}
  Since $\field=\End_{A*H_L}(L)=\End_A(L)^{H_L}$, we have $\End_A(L)=\field $ with the trivial $G$-action, so the above is isomorphic to
  \begin{align*}
    (\field \otimes \End_{\field}(V))^{H_L}
    \cong\End_{\field H_L}(V)
    \cong \field.
  \end{align*}
  Hence $\field G\otimes_{\field H_L}L\otimes V$ is indecomposable. Since it is semisimple by Corollary \ref{corollary_G/H}, it is thus simple.\\
  To see that these are up to isomorphism all simple $A*G$-modules, note that by Corollary \ref{corollary_dividessimples}, every simple $A*G$-module is a  summand of $\field G\otimes L$ for some $L\in \Sim(A)$ and
  \begin{align*}
    \field G \otimes L\cong \field G\otimes_{\field H_L}\field H_L \otimes L\cong \field G \otimes_{\field H_L} (L\otimes \field H_L).
  \end{align*}
  So decomposing $\field H_L$ into indecomposable summands yields the claim.\\
  Clearly, we have isomorphisms of $A*G$-modules
  \begin{align*}
    \field G\otimes_{\field H_L} L\otimes V\rightarrow \field G\otimes_{\field H_L} gL\otimes gV, h\otimes x\otimes v\mapsto hg\otimes x\otimes v.
  \end{align*}
  Finally, suppose we have an isomorphism of $A*G$-modules
  \begin{align*}
    \varphi:\field G\otimes_{\field H_L}(L\otimes V)\rightarrow \field G\otimes_{\field H_{L'}}(L'\otimes W).
  \end{align*}
  Then, restricting to $A$, we obtain an isomorphism
  \begin{align*}
    R_G(\varphi):R_G(\field G\otimes_{\field H_L}(L\otimes V))\rightarrow R_G(\field G\otimes_{\field H_{L'}}(L'\otimes W)).
  \end{align*} 
  Since 
  \begin{align*}
    &R_G(\field G\otimes_{\field H_L}(L\otimes V))\cong \bigoplus_{z\in Z_L}\dim_{\field}(V) zL\\
    \textup{ and }&R_G(\field G\otimes_{\field H_{L'}}(L'\otimes W))\cong \bigoplus_{z'\in Z_{L'}}\dim_{\field}(W) zL',
  \end{align*}
   the theorem of Krull-Remak-Schmidt thus yields a $g\in G$ such that $gL\cong L'$. In particular, $H_L=H_{L'}$ and, since $\field G\otimes_{\field H_L} L\otimes g^{-1}W\cong \field G\otimes_{\field H_L} gL\otimes W$ we have an isomorphism
  \begin{align*}
    \varphi':\field G\otimes_{H_L} (L\otimes V)\rightarrow \field G\otimes_{H_L} (L\otimes g^{-1}W)
  \end{align*}
  We can restrict this to $A*H_L$ to obtain an isomorphism
  \begin{align*}
    \bigoplus_{z\in Z_L}z(L\otimes V)\rightarrow \bigoplus_{z\in Z_L}z(L\otimes g^{-1}W).
  \end{align*}
  Since $L\otimes V$, $L\otimes g^{-1}W$ are simple $A*H_L$-modules by the above, we conclude that we have an isomorphism
  \begin{align*}
    \varphi'':L\otimes V\cong L\otimes g^{-1}W
  \end{align*}
  of $A*H_L$-modules.
  Now note that
  \begin{align*}
    \Hom_{A*H_L}(L\otimes V, L\otimes g^{-1}W) &\cong \Hom_{A}(L\otimes V, L\otimes g^{-1}W)^{H_L}\\
    \cong (\Hom_A(L, L)\otimes \Hom_{\field}(V, g^{-1}W))^{H_L}
    &\cong (\Hom_{\field}(V, g^{-1}W))^{H_L}
    \cong \Hom_{\field H_L}(V, g^{-1}W).
  \end{align*}
  Hence $gV\cong W$.
\end{proof}
The following is a counterexample to Lemma 2 in \cite{MartinezVilla}. There, it is erroneously claimed that if $S$ is a simple $A*G$ submodule of the socle of $A$, then $\Hom_A(S, S)\cong \field$.\\ 
This is in general false. However, it holds if, for example, all simple $\field G$-modules are one-dimensional, i.e. if $G$ is commutative. The reason for this is that in this case we have for any simple $A*G$-module $S$ and any irreducible representation $V$ of $G$ isomorphisms
\begin{align*}
  &\End_{A*G}(S\otimes V)\cong \End_{A}(S\otimes V)^G\cong (\End_A(S)\otimes \End_{\field}(V))^G\\
  &\cong   (\End_A(S)\otimes \field)^G\cong  \End_{A*G}(S)\cong \field
\end{align*}
where $G$ acts trivially on $\End_{\field}(V)$ as $\End_{\field}(V)\cong\field\cong \End_{\field G}(V)=\End_{\field}(V)^G$, so that $S\otimes V$ is irreducible.
Thus the result in \cite[Lemma 2]{MartinezVilla} holds in particular if $G$ is commutative.
\begin{example}\label{example_nonbasicsimples}
  Consider $\field=\mathbb{C}$, $A:=\field^5$, $G:=S_5$ acting on $A$ via permutations of the entries. $A$ is semisimple and basic, and $H_1\cong S_4$, so that by the above proposition we have a simple $A*G$-module $\field S_5\otimes_{\field S_{4}}(\field\times \{0\}^4\otimes V)$ for every irreducible representation $V$ of $S_4$. In particular, since $S_4$ has an irreducible representation $V$ of dimension 3, $A*G$ has a simple module of dimension 
  \begin{align*}
    \dim_{\field}(\field S_5\otimes_{\field S_{4}}(\field\times \{0\}^4\otimes V))=\dim_{\field}(\field S_5/S_{4})\dim_{\field}V=5\cdot 3=15.
  \end{align*}
  Moreover, note that $A$ is a simple $A*G$-module, and if $W$ is an irreducible representation of $S_5$, 
  \begin{align*}
    \dim_{\field}(A\otimes W)=5\dim_{\field}W.
  \end{align*}
  Since $S_5$ has no irreducible representations of dimension 3, this implies that not all simple $A*G$-modules are of the form $A\otimes W$.
\end{example}
\begin{corollary}\label{corollary_indecproj_basic}
The indecomposable projective $A*G$-modules are exactly of the form $$\field G\otimes_{\field H_L}(P_L\otimes V)$$ for some irreducible $\field H_L$-module $V$. They are isomorphic if and only if there is $g\in G$ such that $gL=L', gV=W$.
\end{corollary}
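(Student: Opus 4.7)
My plan is to identify $\field G\otimes_{\field H_L}(P_L\otimes V)$ as the projective cover of the simple $A*G$-module $\field G\otimes_{\field H_L}(L\otimes V)$ from Proposition~\ref{prop_simplesofA*G}. Since indecomposable projective modules biject, via taking tops, with isomorphism classes of simples, this together with Proposition~\ref{prop_simplesofA*G} will yield both the classification and the isomorphism criterion.

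First, I choose $L$ to be $H_L$-equivariant (which is possible by the lemma preceding Proposition~\ref{prop_simplesofA*G}). I then want to pick $P_L$ compatibly with this $H_L$-action: taking a projective cover $Q$ of $L$ in $\modu A*H_L$, the functor $R_{H_L}$ preserves projectives by Proposition~\ref{prop_IR+RI} applied to $H_L$ in place of $G$, and the top computation below shows that $R_{H_L}(Q)\cong P_L$, so I may take $P_L=Q$ as an $A*H_L$-module. The same Proposition~\ref{prop_IR+RI} for $H_L$ shows that $-\otimes V$ preserves projectives, so $P_L\otimes V$ is projective over $A*H_L$.

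The key observation underlying the top computation is that, by Lemma~\ref{lemma_radical}, $\rad(A*G)\cdot N=\rad(A)\cdot N$ as subsets of $N$ for any $A*G$-module $N$, and the analogous identity holds for $A*H_L$. Combined with the exactness of $I_{G/H_L}$, $R_{H_L}$ and $-\otimes V$ (Corollary~\ref{corollary_G/H}, Proposition~\ref{prop_IR+RI}) and a short direct computation from the defining formulas $g'(g\otimes m)=g'g\otimes m$ and $a(g\otimes m)=g\otimes g^{-1}(a)m$, this yields
\begin{align*}
\topp I_{G/H_L}(M)\cong I_{G/H_L}(\topp M) \quad \text{and} \quad \topp(M\otimes V)\cong(\topp M)\otimes V
\end{align*}
for any $A*H_L$-module $M$ and any $\field H_L$-module $V$. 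In particular $\topp(P_L\otimes V)\cong L\otimes V$, and this is simple over $A*H_L$ by Proposition~\ref{prop_simplesofA*G} applied to $H_L$ in place of $G$: since $L$ is $H_L$-equivariant, its stabilizer inside $H_L$ is all of $H_L$, so the formula collapses to $L\otimes V$.

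With this in hand, the argument concludes quickly. Since $I_{G/H_L}$ preserves projectives by Corollary~\ref{corollary_G/H}, the module $\field G\otimes_{\field H_L}(P_L\otimes V)$ is a projective $A*G$-module with simple top $\field G\otimes_{\field H_L}(L\otimes V)$, and is therefore the indecomposable projective cover of this simple. Every indecomposable projective $A*G$-module arises this way, since by Proposition~\ref{prop_simplesofA*G} every simple $A*G$-module has this form, and the isomorphism criterion follows by passing to tops and reapplying Proposition~\ref{prop_simplesofA*G}. The main obstacle is the compatibility of induction with the top functor; everything else is bookkeeping resting on the structural results already established.
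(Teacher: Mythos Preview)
Your proof is correct and follows essentially the same approach as the paper: both show that $\field G\otimes_{\field H_L}(P_L\otimes V)$ is projective, compute its top via Lemma~\ref{lemma_radical} to be $\field G\otimes_{\field H_L}(L\otimes V)$, and then invoke Proposition~\ref{prop_simplesofA*G} for simplicity of the top and for the isomorphism criterion. The only minor difference is that the paper obtains projectivity by observing $V\mid\field H_L$, so that the module is a summand of $\field G\otimes P_L\cong I_G(P_L)$, whereas you route through $A*H_L$ and apply preservation of projectives under $-\otimes V$ and $I_{G/H_L}$; your version is slightly more explicit about why $P_L$ carries an $H_L$-action, which the paper leaves implicit.
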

\begin{proof}
  Since $V|\field H_L$, $\field G\otimes_{\field H_L}(P_L\otimes V)$ is a direct summand of $\field G\otimes P_L$ as above, and is therefore projective. Moreover, by Lemma \ref{lemma_radical} and Proposition \ref{prop_IR+RI}
  \begin{align*}
    \topp(\field G\otimes_{\field H_L}(P_L\otimes V))&=\field G\otimes_{\field H_L}\topp(P_L\otimes V)\\
    &=\field G\otimes_{\field H_L}(\topp(P_L)\otimes V)\\
    &=\field G\otimes_{\field H_L}(L\otimes V).
  \end{align*}
  By Proposition \ref{prop_simplesofA*G}, this is simple, so that $\field G\otimes_{\field H_L}(P_L\otimes V)$ is indecomposable. 
\end{proof}
\section{Quasi-Hereditary Algebras}\label{s2}
In this section, we shall repeat some standard definitions and results about \puhh algebras, as introduced by \cite{Scott} and \cite{CPS}. For an introduction to quasi-hereditary algebras see for example \cite{DlabRingel}. \\
Let $A$ be an algebra. Denote by $\Sim(A)$ the set of simple $A$-modules and suppose $\leq$ is a partial order on $\Sim(A)$.
\begin{remark}\label{remark_extendorder}
  Suppose $\leq$ is a partial order on the set $\Sim(A)$ of simple $A$-modules. Then this induces a partial order on its additive closure $\add(\Sim(A))$ via
  \begin{align*}
    S\leq S'\Leftrightarrow L\leq L' \textup{ for all }  L|S, L'|S'.
  \end{align*}
  Thus, if $\leq$ is a partial order on $\Sim(A)$, we will also use it to compare semisimple modules.
\end{remark}
\begin{definition}\label{def_adapted}\cite[p. 3]{DlabRingel}
  We call a partial order $\leq$ on $\Sim(A)$ adapted, if all $M\in \modu A$ with simple top $\Top(M)=L$ and simple socle $\soc(M)=L'$, such that $L$ and $L'$ are incomparable with respect to $\leq$, have a composition factor $L''$ such that $L''>L$ and $L''>L'$.
  \end{definition}
\begin{lemma}\label{lemma_compfactors} 
\begin{enumerate}
  \item Let $M$ be a module with a composition factor $L'$. Then there is a factor module $M'$ of $M$ with socle $\soc(M')\cong L'$.
  \item Let $M$ be a module with a composition factor $L'$. Then there is a submodule $M'$ of $M$ with top $\topp(M')\cong L'$.
\end{enumerate}
\end{lemma}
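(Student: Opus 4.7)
The plan is to prove the two statements by dual arguments; I will focus on (1) and indicate that (2) follows either by an analogous argument or by applying (1) to the $\field$-dual $DM$ over $A^{\op}$.

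For part (1), the strategy is a standard maximality argument. I would choose $N \subseteq M$ to be a submodule which is \emph{maximal} among those submodules for which $L'$ is still a composition factor of $M/N$, and then set $M' := M/N$. Such an $N$ exists because $N = 0$ is a candidate and $M$ is finite-dimensional. The key claim is then that $\soc(M')$ is simple and isomorphic to $L'$.

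To establish the claim, I would argue in two steps. First, take any simple submodule $S \subseteq M'$ and consider the proper quotient $M'/S$, which is of the form $M/N''$ for some $N'' \supsetneq N$. By maximality of $N$, $L'$ is not a composition factor of $M/N''$; but the composition factors of $M'$ are the composition factors of $M'/S$ together with $S$. Hence $S \cong L'$. This forces every simple submodule of $M'$, and therefore all of $\soc(M')$, to be isomorphic to direct sums of copies of $L'$. Second, if $\soc(M') \supseteq S_1 \oplus S_2$ with $S_1, S_2 \cong L'$, then the image of $S_2$ in $M'/S_1$ is still a submodule isomorphic to $L'$, so $L'$ remains a composition factor of $M'/S_1 = M/N'''$ for some $N''' \supsetneq N$, again contradicting maximality. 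Thus $\soc(M')$ is simple and $\cong L'$.

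Part (2) is dual. Either I repeat the argument with ``submodule minimal such that $L'$ is a composition factor of $M'$'' in place of the maximality condition above: by minimality $L'$ is not a composition factor of $\rad(M')$, so $L'$ must be a summand of the semisimple module $\topp(M') = M'/\rad(M')$; and then the same two-summand obstruction shows $\topp(M')$ is simple. Alternatively, one observes that $DL'$ is a composition factor of the $A^{\op}$-module $DM$, applies (1) to get a quotient $(DM)' = DM/N_0$ with simple socle $DL'$, and then takes $M' := D((DM)') \hookrightarrow DD(M) \cong M$, which has simple top $L'$.

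The only mildly delicate point is step two of the argument for (1), i.e.\ upgrading ``every simple submodule of $M'$ is isomorphic to $L'$'' to ``$\soc(M')$ is simple'', since a priori the socle could be a direct sum of several copies of $L'$; the ``kill one copy, the other remains'' trick handles this cleanly. Everything else is bookkeeping with composition series and the correspondence between submodules of $M'$ and submodules of $M$ containing $N$.
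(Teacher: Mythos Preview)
Your argument is correct, and it follows a route that is close in spirit to the paper's but differs in the maximality condition used. The paper first passes to a factor module $M''$ of $M$ in which $L'$ actually embeds as a submodule via some $\iota$, and then takes $N\subseteq M''$ maximal subject to $N\cap\im(\iota)=(0)$; the socle of $M''/N$ is then shown to equal $\pi\iota(L')$ by a direct complement argument. You instead maximize $N\subseteq M$ subject to $L'$ remaining a composition factor of $M/N$, and argue via composition multiplicities. Your approach is slightly more economical in that it avoids the preliminary reduction to a quotient where $L'$ is a submodule, and the two-step socle analysis (isotypicity, then simplicity) is clean. The paper's approach, on the other hand, pins down a specific copy of $L'$ from the outset and shows it is \emph{the} socle, which is a marginally more concrete conclusion. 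For part~(2), both you and the paper proceed by duality; your explicit $D$-duality alternative is a perfectly good way to make this precise.
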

\begin{proof}
  \begin{enumerate}
    \item By definition of a composition factor, there is a factor module $M''$ of $M$ such that we have an embedding $\iota:L'\rightarrow M''$.\\
    Now let $N$ be a maximal submodule of $M''$ subject to $N\cap \im(\iota)=(0)$. Then $M':=M''/N$ is also a factor module of $M$. Denote by $$\pi:M''\rightarrow M'$$ the canonical projection. Then $\pi\circ \iota:L'\rightarrow M'$ is injective, so that $L'|\soc(M')$. Write $\soc(M')=\pi\circ\iota(L')\oplus S$. Then $N\subseteq \pi^{-1}(S)$ and $\iota(L')\cap \pi^{-1}(S)=(0)$. Hence by maximality of $N$, $\pi^{-1}(S)=N$, so that $S=(0)$.
    \item This is dual to 1.
  \end{enumerate}
  \end{proof}
\begin{lemma}\label{lemma_adaptedrewrite}
  The partial order $\leq$ is adapted if and only if every module $M$ which has a composition factor $L'$ such that $L'$ is incomparable to every summand $L$ of its top $\topp(M)$ has a composition factor $L''$ and a composition factor $L|\topp(M)$ which is a summand of the top, such that $L''>L$.
 \end{lemma}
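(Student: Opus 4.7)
The plan is to prove the two implications of the biconditional separately. The backward direction will be a direct specialisation of the rewritten condition to modules with simple top and simple socle, recovering Definition~\ref{def_adapted}. The forward direction will use both parts of Lemma~\ref{lemma_compfactors} to carve out of an arbitrary $M$ a subquotient with simple top and simple socle, so that adaptedness can be applied directly to this subquotient.

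For the forward direction, given a module $M$ with a composition factor $L'$ incomparable to every summand of $\topp(M)$, my first step would be to apply Lemma~\ref{lemma_compfactors}(1) to obtain a factor module $M_1$ of $M$ with $\soc(M_1)\cong L'$. Since $\topp(M_1)$ is a quotient of $\topp(M)$, every summand of $\topp(M_1)$ is a summand of $\topp(M)$ and therefore still incomparable to $L'$. Next, I would pick any summand $L$ of $\topp(M_1)$ and apply Lemma~\ref{lemma_compfactors}(2) inside $M_1$ to produce a submodule $M_2\subseteq M_1$ with $\topp(M_2)\cong L$. The key observation is that $\soc(M_2)\subseteq \soc(M_1)=L'$ together with $M_2\ne 0$ forces $\soc(M_2)\cong L'$, so $M_2$ has simple top $L$ and simple socle $L'$ with $L$ and $L'$ incomparable. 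Adaptedness applied to $M_2$ then produces a composition factor $L''$ of $M_2$, and hence of $M$, with $L''>L$ and $L''>L'$.

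For the backward direction, specialising the rewritten hypothesis to a module $M$ with simple top $L$ and simple socle $L'$, where $L$ and $L'$ are incomparable, immediately recovers Definition~\ref{def_adapted}: the socle composition factor $L'$ is incomparable to the unique summand $L$ of $\topp(M)$, so the only eligible summand of the top is $L$ itself, and the output composition factor $L''$ satisfies $L''>L$ and $L''>L'$. I do not anticipate a serious obstacle beyond bookkeeping. The main point to keep in mind is the correct order in which the two parts of Lemma~\ref{lemma_compfactors} are applied: first part~(1) to fix the socle of a factor module of $M$, and only then part~(2) inside that factor module to cut down to a simple top, so that the inclusion $\soc(M_2)\subseteq\soc(M_1)$ is automatic and the simple-top/simple-socle configuration of Definition~\ref{def_adapted} materialises inside $M$ as a subquotient.
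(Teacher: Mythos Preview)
Your forward direction is essentially the same as the paper's and is correct.

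Your backward direction has a genuine gap. You claim that specialising the rewritten hypothesis to a module $M$ with simple top $L$ and simple socle $L'$ \emph{immediately} yields a composition factor $L''$ with $L''>L$ \emph{and} $L''>L'$. But re-read the statement: the rewritten condition only promises the existence of some $L''$ and some summand $L$ of $\topp(M)$ with $L''>L$. There is no assertion that $L''>L'$. So when $\topp(M)=L$ is simple you do get $L''>L$, but the inequality $L''>L'$ required by Definition~\ref{def_adapted} is not part of the hypothesis and does not follow for free.

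The paper closes this gap with an additional argument: choose $L''$ \emph{maximal} among composition factors of $M$ with $L''>L$, then apply Lemma~\ref{lemma_compfactors}(2) to obtain a submodule $M'\subseteq M$ with simple top $L''$ and (since $\soc(M')\subseteq\soc(M)=L'$) simple socle $L'$. If $L''$ and $L'$ were incomparable, the rewritten hypothesis applied to $M'$ would produce a composition factor $L'''>L''$, contradicting maximality. Hence $L''$ and $L'$ are comparable, and since $L''>L$ while $L$ and $L'$ are incomparable, one must have $L'<L''$. This maximality step is the missing idea in your backward direction.
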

 \begin{center}
\begin{tabular}{c|c c c c|c}
   \cline{2-5}
     \multirow{6}{*}{M} \ldelim\{{6}{*} &\cellcolor{gray!15} & \cellcolor{gray!30}  $L$ &  & & $\leftarrow \Top(M)$ \\
     & \cellcolor{gray!15}& \cellcolor{gray!30} &  & & \\
     & \cellcolor{gray!30}& \cellcolor{gray!30}  $L''$ &   & & \\
     & \cellcolor{gray!30}& \cellcolor{gray!30} &    & & \\
     & \cellcolor{gray!30} $L'$ & &  &  & \\ 
       &  & &  & \\
       \cline{2-5}
  \end{tabular}
\end{center}
\begin{proof}
  Suppose $\leq$ is adapted and let $M$ be a module with a composition factor $L'$ such that $L'$ is incomparable to every summand $L$ of its top $\topp(M)$.\\
  By Lemma \ref{lemma_compfactors}, $M$ has a factor module $M'$ with simple socle $L'$.
   Let $L$ be any summand of $\topp(M')$. Then, since $M'$ is a factor module of $M$, we have $\topp(M')|\topp(M)$. Hence $L$ is also a summand of $\topp(M)$, and thus in particular incomparable to $L'$.
    Moreover, we can again apply Lemma \ref{lemma_compfactors} to obtain a submodule $M''$ of $M'$ with simple top $\topp(M'')\cong L$. As $M''$ is a submodule of $M'$, $\soc(M'')|\soc(M')\cong L'$, so that $M''$ has simple socle isomorphic to $L'$. Since $L$ and $L'$ are incomparable, $M''$ thus has a composition factor $L''>L, L'$, and since $L''$ is a composition factor of $M''$, which is a submodule of a factor module of $M$, $L''$ is also a composition factor of $M$. Hence this proves the first implication.\\
  On the other hand, suppose every module $M$ which has a composition factor $L'$ such that $L'$ is incomparable to every summand of its top has a composition factor $L''$ and a composition factor $L|\topp(M)$ such that $L''>L$, and let $M$ be a module with simple top $L$ and simple socle $L'$ such that $L$ and $L'$ are incomparable.\\
   Then by assumption, $M$ has a composition factor $L''$ such that $L''>L$. Without loss of generality we can choose $L''$ maximal with respect to $L''>L$. Then by Lemma \ref{lemma_compfactors}, $M$ has a submodule $M'$ with simple top $\topp(M')\cong L''$. Since $M$ has simple socle $L'$, $M'$ also has simple socle $L'$. Now if $L'$ and $L''$ were incomparable, then by assumption $M'$ would have a composition factor $L'''>L''$, which is a contradiction to the maximality of $L''$. Hence $L'$ and $L''$ are comparable. Since $L''>L$ and $L'$ and $L$ are incomparable, this implies $L'<L''$.
\end{proof}
\begin{definition}\label{def_standardmodule}
  Let $\leq$ be a partial order on $\Sim(A)$. Then for every simple $A$-module $L$ we define 
  \begin{align*}
    \Delta_L:=P_L/\left(\sum_{L'\nleq L, \varphi\in \Hom_A(P_{L'}, P_L)}\im(\varphi)\right)
  \end{align*}
  and 
  \begin{align*}
    \fakedelta_L:=P_L/\left(\sum_{L'> L, \varphi\in \Hom_A(P_{L'}, P_L)}\im(\varphi)\right).
  \end{align*}
  Denote by $\pi_L:P_L\rightarrow \Delta_L$ and $\fakepi_L:P_L\rightarrow \fakedelta_L'$ the canonical projection. 
  Moreover, write $$\Delta:=\bigoplus_{L\in \Sim(A)}\Delta_L$$ and $$\fakedelta:=\bigoplus_{L\in \Sim(A)}\fakedelta_L$$
   and call $(\Delta_L)_{L\in \Sim(A)}$ the collection of standard modules and $(\fakedelta_L)_{L\in \Sim(A)}$ the collection of pseudostandard modules for $(A, \leq)$.
  \end{definition}
  Later, in case more than one algebra is involved, we will sometimes add a superscript to $\Delta$, $\fakedelta$, $\Delta_L$ and $\fakedelta_L$ indicating the respective algebra.
  \begin{lemma}\label{lemma_standardmodules}
    Let $L\in \Sim(A)$. Then the following statements hold:
    \begin{enumerate}
      \item $L'\nleq L$ for every summand $L'$ of $\Top(\ker(\pi_L))$ and for every epimorphism $f:P_L\rightarrow M$ such that $L'\nleq L$ for every summand $L'$ of $\Top(\ker(f))$ we have an epimorphism $g:M\rightarrow \Delta_L$ such that $\pi_L=g\circ f$.
      \item $L' \leq L$ for every composition factor $L'$ of $\Delta_L$ and for every homomorphism $f:P_L\rightarrow M$ such that $L'\leq L$ for every composition factor $L'$ of $M$ we have a homomorphism $g:\Delta_L\rightarrow M$ such that $f=g\circ \pi_L$.
      \item We have $\Top(\ker(\fakepi_L))>L$ and for every epimorphism $f:P_L\rightarrow M$ such that $\Top(\ker(f))>L$ we have an epimorphism  $g:M\rightarrow \fakedelta_L$ such that $\fakepi_L=g\circ f$.
      \item $L' \ngtr L$ for every composition factor $L'$ of $\fakedelta_L'$ and for every homomorphism $f:P_L\rightarrow M$ such that $L'\ngtr L$ for every composition factor $L'$ of $M$ we have a homomorphism $g:\fakedelta_L'\rightarrow M$ such that $f=g\circ \fakepi_L$.
    \end{enumerate}
  \end{lemma}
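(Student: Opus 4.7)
The plan is to treat the four parts as two matching pairs: items (1) and (3) concern the top of $\ker(\pi_L)$ or $\ker(\fakepi_L)$ together with a universal property among surjections out of $P_L$, while (2) and (4) concern composition factors together with a universal property among homomorphisms out of $P_L$. Since the pseudostandard statements (3) and (4) are obtained from (1) and (2) by swapping the condition ``$L' \nleq L$'' for ``$L' > L$'' (and ``$L' \leq L$'' for ``$L' \ngtr L$''), it suffices to spell out the strategy for (1) and (2); the other two will then follow by word-for-word analogous arguments.

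For (1), the definition of $\ker(\pi_L)$ directly yields a surjection $\bigoplus_i P_{L_i'} \twoheadrightarrow \ker(\pi_L)$ with every $L_i' \nleq L$; passing to tops shows that every simple summand of $\topp(\ker(\pi_L))$ is of the form $L' \nleq L$. For the universal property, given an epimorphism $f : P_L \to M$ whose $\topp(\ker f)$ has only summands $L' \nleq L$, I would compose the projective cover $P_{\topp(\ker f)} \twoheadrightarrow \ker f$ with the inclusion $\ker f \hookrightarrow P_L$ to realise $\ker f$ as a sum of images of maps $P_{L'} \to P_L$ with $L' \nleq L$; by the defining formula for $\ker(\pi_L)$ this gives $\ker f \subseteq \ker(\pi_L)$, and the desired $g$ exists by the universal property of the quotient.

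For (2), I would first verify that $\topp(\Delta_L) = L$, which amounts to $\ker(\pi_L) \subseteq \rad(P_L)$: any $\varphi : P_{L'} \to P_L$ with $L' \nleq L$ satisfies $L' \neq L$, so $\varphi$ cannot be surjective (otherwise passing to tops would force $L \cong L'$), hence $\im \varphi$ lies in the unique maximal submodule $\rad(P_L)$ of $P_L$. To exclude a composition factor $L' \nleq L$ in $\Delta_L$, I would invoke Lemma~\ref{lemma_compfactors}(2) to obtain a submodule $N \subseteq \Delta_L$ with $\topp(N) \cong L'$, lift $P_{L'} \twoheadrightarrow N \hookrightarrow \Delta_L$ along $\pi_L$ to a map $\tilde\varphi : P_{L'} \to P_L$, and derive the contradiction $N = \pi_L(\im \tilde\varphi) = 0$, since $\im \tilde\varphi \subseteq \ker(\pi_L)$ by definition. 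The universal property is then straightforward: for $f : P_L \to M$ with all composition factors $\leq L$ and any $\varphi : P_{L'} \to P_L$ with $L' \nleq L$, the composition $f \circ \varphi$ has image whose top is a quotient of $L'$, which must vanish because $L'$ is not a composition factor of $M$; hence $\ker(\pi_L) \subseteq \ker f$ and the required factorisation exists.

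The main obstacle is purely bookkeeping: keeping the directions ``$\nleq$'' versus ``$>$'' and ``$\leq$'' versus ``$\ngtr$'' straight through the parallel arguments for $\Delta_L$ and $\fakedelta_L$, and in (4) noting that ``$L' \ngtr L$'' includes $L' = L$ so that $L$ itself survives as the top of $\fakedelta_L$, exactly as $L$ is the top of $\Delta_L$ in (2). Once the two patterns from (1) and (2) are in place, (3) and (4) reduce to the same projective-cover and projective-lifting arguments.
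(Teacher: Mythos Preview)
Your proposal is correct and follows essentially the same strategy as the paper's proof: for (1) and (3) use a projective cover of $\ker(f)$ to embed $\ker(f)$ into $\ker(\pi_L)$ (resp.\ $\ker(\fakepi_L)$), and for (2) and (4) show that $f\circ\varphi=0$ for every relevant $\varphi$ so that $\ker(\pi_L)\subseteq\ker(f)$. The only difference is that you spell out the first assertion of (2) (that all composition factors of $\Delta_L$ are $\leq L$) via Lemma~\ref{lemma_compfactors} and a projective lift, whereas the paper simply asserts this ``by definition''; your added detail is correct and arguably clearer.
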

  \begin{proof}
    \begin{enumerate}
      \item We have $L'\nleq L$ for every summand $L'$ of $\Top(\ker(\pi_L))$ by definition. So let $f:P_L\rightarrow M$ be an epimorphism such that $L'\nleq L$ for every summand $L'$ of $\Top(\ker(f))$. Then we have a projection $\pi:\bigoplus_{L'\nleq L}n_{L'}P_{L'}\rightarrow \ker(f)$ for some $n_{L'}\in \mathbb{N}_0$. Composing with the embedding yields that $$\ker(f)\subseteq \sum_{L'\nleq L, \varphi\in \Hom_A(P_{L'}, P_L)}\im(\varphi)=\ker(\pi_L).$$ Hence $\pi_L$ factors through $f$.
      \item By definition, $L'\leq  L$ for every composition factor $L'$ of $\Delta_L$. So let $f:P_L\rightarrow M$ such that $L'\nleq L$ for every composition factor $L'$ of $M$. Let $L'\nleq L$ and let $\varphi:P_{L'}\rightarrow P_L$. Then, since all composition factors of $M$  are less than or equal to $L$, $f\circ \varphi=0$. Hence $\im(\varphi)\subseteq \ker(f)$, so that $\ker(\pi_L)\subseteq \ker(f)$ and thus $f$ factors through $\pi_L$.
      \item This is analogous to 1.
      \item This is analogous to 2.
    \end{enumerate}
  \end{proof}
  \begin{lemma} \label{lemma_extdelta}
    For every $L, L'\in \Sim(A)$ such that $\Ext^1(\fakedelta_{L'}, \Delta_{L})\neq (0)$ we have $L'<L$
  \end{lemma}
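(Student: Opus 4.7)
The plan is to deduce the $\Ext^1$-vanishing statement from a $\Hom$-vanishing via the defining short exact sequence of $\fakedelta_{L'}$. Concretely, writing $K:=\ker(\fakepi_{L'})$, the sequence
\[
0 \to K \to P_{L'} \xrightarrow{\fakepi_{L'}} \fakedelta_{L'} \to 0
\]
together with projectivity of $P_{L'}$ yields, upon applying $\Hom(-,\Delta_L)$, a surjection $\Hom(K,\Delta_L) \twoheadrightarrow \Ext^1(\fakedelta_{L'},\Delta_L)$. It therefore suffices to prove the contrapositive: if $L' \not< L$, then $\Hom(K,\Delta_L)=0$.

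To exploit this, I first record that $\Top(\Delta_L)=L$. Indeed, by Definition~\ref{def_standardmodule} the submodule $\ker(\pi_L)$ is the sum of images of maps $P_{L''}\to P_L$ for $L''\nleq L$, in particular for $L''\neq L$; any such map factors through $\rad(P_L)$, since otherwise postcomposition with $P_L \twoheadrightarrow L$ would realize $L$ as a quotient of $P_{L''}$, forcing $L''=L$. Hence $\ker(\pi_L)\subseteq \rad(P_L)$, so $\Delta_L$ has simple top $L$.

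Next, by Lemma~\ref{lemma_standardmodules}(3), every simple summand of $\Top(K)$ strictly exceeds $L'$. Fix a projective cover $P=\bigoplus_i P_{L''_i} \twoheadrightarrow K$; the $L''_i$ are exactly the summands of $\Top(K)$, so $L''_i>L'$ for every $i$. The surjection induces an embedding
\[
\Hom(K,\Delta_L) \hookrightarrow \Hom(P,\Delta_L) = \bigoplus_i \Hom(P_{L''_i},\Delta_L),
\]
and $\Hom(P_{L''_i},\Delta_L)\neq 0$ forces $L''_i$ to be a summand of $\Top(\Delta_L)=L$, i.e.~$L''_i = L$. If $L'\not< L$, then $L''_i=L$ together with $L''_i>L'$ would give $L>L'$, contradicting $L'\not< L$. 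Hence every summand vanishes, $\Hom(K,\Delta_L)=0$, and therefore $\Ext^1(\fakedelta_{L'},\Delta_L)=0$, as required.

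The main technical point is the auxiliary fact $\Top(\Delta_L)=L$, which is not quite immediate from Definition~\ref{def_standardmodule} and requires the observation that every map $P_{L''}\to P_L$ with $L''\neq L$ lands in $\rad(P_L)$. Once this is in place, the proof is a routine Hom/Ext manipulation based on Lemma~\ref{lemma_standardmodules}(3).
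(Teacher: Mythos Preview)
Your overall strategy matches the paper's: use the short exact sequence $0\to K\to P_{L'}\to\fakedelta_{L'}\to 0$, apply $\Hom(-,\Delta_L)$, and exploit that $\Top(K)$ consists of simples strictly greater than $L'$ (Lemma~\ref{lemma_standardmodules}(3)). However, there is a genuine error in the key step.

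You claim that $\Hom(P_{L''_i},\Delta_L)\neq 0$ forces $L''_i$ to be a summand of $\Top(\Delta_L)$. This is false: for a projective $P_S$ with simple top $S$, the condition $\Hom(P_S,M)\neq 0$ is equivalent to $S$ being a \emph{composition factor} of $M$, not to $S\mid\Top(M)$. A nonzero map $P_{L''_i}\to\Delta_L$ need not be surjective; its image is a submodule of $\Delta_L$ with top $L''_i$, which only tells you $[\Delta_L:L''_i]\neq 0$. Consequently, your detour through the auxiliary fact $\Top(\Delta_L)=L$ is both unnecessary and insufficient for the conclusion you draw from it.

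The fix is immediate and brings your argument in line with the paper's: from $\Hom(P_{L''_i},\Delta_L)\neq 0$ conclude that $L''_i$ is a composition factor of $\Delta_L$, hence $L''_i\leq L$ by Lemma~\ref{lemma_standardmodules}(2). Combined with $L''_i>L'$ this gives $L'<L''_i\leq L$, so $L'<L$. This is exactly the paper's argument, phrased via a projective presentation $\bigoplus_{L''>L'}n_{L''}P_{L''}\to P_{L'}\to\fakedelta_{L'}\to 0$.
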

  \begin{proof}
    By definition, the module $\fakedelta_{L'}$ has a projective presentation
\[\begin{tikzcd}[ampersand replacement=\&]
	{\bigoplus_{L''>L'}n_{L''}P_{L''}} \& {P_L'} \& {\fakedelta_L'} \& {(0)}
	\arrow[from=1-1, to=1-2]
	\arrow[from=1-2, to=1-3]
	\arrow[from=1-3, to=1-4]
\end{tikzcd}\]
for some integers $n_{L''}\in \mathbb{N}_0$. Suppose $\Ext^1_A(\fakedelta_{L'}, \Delta_{L})\neq (0)$. Then 
\begin{align*}
  \Hom_A({\bigoplus_{L''>L'}n_{L''}P_{L''}}, \Delta_L)\neq (0),
\end{align*}
so that for some $L''>L'$ 
\begin{align*}
  \Hom_A({P_{L''}}, \Delta_L)\neq (0).
\end{align*}
Thus $L''$ is a composition factor of $\Delta_L$. Now since every composition factor of $\Delta_{L}$ is less than or equal to $L$, this implies $L'<L''\leq L$.
  \end{proof}
  The following definition is due to \cite{DlabRingel}; it resembles the definition of an exceptional collection, originating in the work of Beilinson \cite{Beilinson1, Beilinson2}, developed in \cite{Rudakov} and \cite{Bondal}, the only difference being that condition 3. is here only required for $\Ext^1$ instead of for $\Ext^n$ for all $n\geq 1$.
\begin{definition}\label{def_standardisableset}
  Let $\leq$ be a partial order on $\Sim(A)$. Then a standardisable set for $(A, \leq )$ is a family of modules $M=(M_L)_{L\in \Sim(A)}$ such that
  \begin{enumerate}
    \item $\Top(M_L)\cong L$.
    \item $\Hom_A(M_L, M_{L'})\neq (0)\Rightarrow L\leq  L'$.
    \item $\Ext_A^1(M_L, M_{L'})\neq (0)\Rightarrow L< L'$.
  \end{enumerate}
\end{definition}
\begin{remark}\label{remark_standardisable}
  Note that $(\Delta_L)_L$ fulfills condition 1. and 2. in the above definition. Thus by Lemma \ref{lemma_extdelta}, if $\fakedelta_L=\Delta_L$ for every $L\in \Sim(A)$, then $(\Delta_L)_L=(\fakedelta_{L})_L$ is a standardisable set.
\end{remark}
The following lemma tells us that $(\Delta_L)_L=(\fakedelta_{L})_L$ if and only if $\leq$ is adapted.
Moreover, the former is the case if and only if any refinement of our partial order will give rise to the same set of standard modules. Thus, being adapted means that our partial order is, in a sense, fine enough.
\begin{lemma}\label{lemma_adapted}
  The following statements are equivalent:
  \begin{enumerate}
    \item $\Delta_L=\fakedelta_L$ for every $L\in \Sim(A)$.
    \item $\leq$ is adapted.
    \item $(\fakedelta_L)_L$ is a standardisable set.
    \item $\Hom_A(\fakedelta_{L'}, \fakedelta_L)\neq (0) \Rightarrow L'\leq L$.
  \end{enumerate}
\end{lemma}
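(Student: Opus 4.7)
The plan is to establish the cycle $1 \Rightarrow 3 \Rightarrow 4 \Rightarrow 1$ and the equivalence $1 \Leftrightarrow 2$ as a separate pair. The first two links of the cycle are essentially free: $1 \Rightarrow 3$ is Remark \ref{remark_standardisable}, and $3 \Rightarrow 4$ is condition 2 of Definition \ref{def_standardisableset} applied to $(\fakedelta_L)_L$.

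For $4 \Rightarrow 1$, I would argue by contradiction. If some $\Delta_L$ is strictly smaller than $\fakedelta_L$, then $\fakedelta_L$ has a composition factor $L'$ with $L' \nleq L$, and Lemma \ref{lemma_standardmodules}(4) forces $L'$ to be incomparable to $L$. I would choose such $L'$ \emph{maximal} among composition factors of $\fakedelta_L$ not below $L$. A short case distinction rules out any composition factor $L''$ of $\fakedelta_L$ strictly above $L'$: such an $L''$ would be either $\leq L$, in which case $L'' > L'$ yields $L' \leq L$, or $\nleq L$, contradicting maximality. Lemma \ref{lemma_compfactors}(2) then supplies a submodule of $\fakedelta_L$ with top $L'$, hence a nonzero map $P_{L'} \to \fakedelta_L$; by Lemma \ref{lemma_standardmodules}(4) it factors through $\fakedelta_{L'}$, and condition 4 forces $L' \leq L$, a contradiction.

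For $2 \Rightarrow 1$ I would use a symmetric trick: if $\Delta_L \neq \fakedelta_L$, pick a composition factor $L'$ of $\fakedelta_L$ incomparable to $L$ as above, use Lemma \ref{lemma_compfactors}(1) to find a factor module $M'$ of $\fakedelta_L$ with simple socle $L'$ (necessarily with simple top $L$, since $\topp(\fakedelta_L) = L$ is simple), and apply adaptedness to $M'$ to obtain a composition factor strictly greater than $L$; this still lives in $\fakedelta_L$, contradicting Lemma \ref{lemma_standardmodules}(4).

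The main obstacle is $1 \Rightarrow 2$. Given $M$ with simple top $L$, simple socle $L'$ and $L, L'$ incomparable, I would first show that $M$ has a composition factor $> L$: otherwise all its composition factors are $\ngtr L$, so Lemma \ref{lemma_standardmodules}(4) lets the surjection $P_L \twoheadrightarrow M$ factor through $\fakedelta_L = \Delta_L$; this makes $M$ a quotient of $\Delta_L$, forcing every composition factor to be $\leq L$ and contradicting the presence of $L'$. Now choose $L''$ maximal among composition factors of $M$ satisfying $L'' > L$. The case $L'' \leq L'$ is excluded immediately, since combined with $L'' > L$ it would give $L < L'$; the only remaining alternative to $L'' > L'$ is that $L''$ is incomparable to $L'$. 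Here Lemma \ref{lemma_compfactors}(2) supplies a submodule $M'' \subseteq M$ with simple top $L''$ and simple socle $L'$ (inherited from $M$), and reapplying the first step of the argument with $L''$ in place of $L$ produces a composition factor of $M$ strictly greater than $L''$, contradicting maximality. The recursive reuse of the step-$1$ argument at a shifted base point is the trickiest bookkeeping in the proof.
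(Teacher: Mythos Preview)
Your proof is correct and tracks the paper's closely: your $4 \Rightarrow 1$ argument is the same as the paper's, and your cycle $1 \Rightarrow 3 \Rightarrow 4 \Rightarrow 1$ matches the paper's $2 \Rightarrow 3 \Rightarrow 4 \Rightarrow 1$ in content (the paper's appeal to Remark~\ref{remark_standardisable} for $2 \Rightarrow 3$ is really the implication $1 \Rightarrow 3$ anyway). The one substantive difference is in $1 \Rightarrow 2$: the paper stops after producing a composition factor $L'' > L$, whereas you go on, via the maximality-and-recursion trick, to show $L'' > L'$ as well, which the definition of adaptedness actually requires. That extra step is precisely the argument from the second half of the proof of Lemma~\ref{lemma_adaptedrewrite}, so the paper can be read as invoking it implicitly; your version is simply self-contained. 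Your separate $2 \Rightarrow 1$ is a clean direct argument not present in the paper, but it is not needed once the cycle and $1 \Rightarrow 2$ are in place.
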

\begin{proof}
  \begin{enumerate}
    \item[$1\Rightarrow 2$] Suppose $\Delta_L=\fakedelta_L$ for every $L\in \Sim(A)$ and let $M$ be an $A$-module with simple top $L$ and socle $L'$. Suppose no composition factor $L''$ of $M$ is bigger than $L$. Since $L=\top(M)$ there is an epimorphism $\pi_M:P_L\rightarrow M$. Now since no composition factor $L''$ of $M$ is bigger than $L$, Lemma \ref{lemma_standardmodules} yields a homomorphism $g:\fakedelta_L\rightarrow M$ such that $\pi_M=g\circ \fakepi_L$. In particular, $g$ is an epimorphism, so that, since every composition factor of $\fakedelta_L=\Delta_L$ is less than or equal to $L$, every composition factor of $M$ is less than or equal to $L$. In particular $L'$ and $L$ are comparable.
    \item[$2\Rightarrow 3$] This holds by Remark \ref{remark_standardisable}.
    \item[$3\Rightarrow 4$] This holds by definition.
    \item[$4\Rightarrow 1$] Suppose there is some $L\in \Sim(A)$ such that $\Delta_L\neq \fakedelta_L$. Then $\fakedelta_L$ has some composition factor $L'\nleq L$. Let $L'$ be a maximal such composition factor. Then there is a non-zero homomorphism $f:P_{L'}\rightarrow \fakedelta_L$. Moreover, since $L'$ is a maximal composition factor of $\fakedelta_L$, every composition factor $L''\ngtr L'$ for every composition factor $L''$ of $\fakedelta_L$. Thus Lemma \ref{lemma_standardmodules} yields a homomorphism $g: \fakedelta_{L'}\rightarrow \fakedelta_L$ such that $f=g\circ \fakepi_{L'}$. In particular, 
        $$\Hom_A(\fakedelta_{L'}, \fakedelta_L)\neq (0).$$
    \end{enumerate}
\end{proof}
\begin{example}
  Consider the algebra $A$ given by the quiver 
\[\begin{tikzcd}[ampersand replacement=\&]
	a \& b \& c
	\arrow["\beta", from=1-2, to=1-3]
	\arrow["\alpha", from=1-1, to=1-2]
\end{tikzcd}\]
    with relations $\langle \beta\alpha \rangle=J^2$ and the partial order on $\Sim(A)=\{L_a, L_b, L_c\}$ given by $L_c<L_a$ and $L_c<L_b$.
    Then the indecomposable projective modules are given by
    \begin{align*}
      P_a=\begin{pmatrix} a \\ b\end{pmatrix}\textup{, }
      P_b=\begin{pmatrix} b \\ c\end{pmatrix}
      \textup{ and } P_c=(c),
    \end{align*}
    so $\fakedelta_i=P_i$ for every $i\in \{a, b, c\}$. In particular, $\Hom_A(\fakedelta_b, \fakedelta_a)\neq (0)$, so that  $(\fakedelta_i)_i$, is not standardisable. \\
    On the other hand $\Delta_a=L_a$, $\Delta_b=P_b$ and $\Delta_c=P_c$, so that
    \begin{align*}
      \Ext^1_A(\Delta_j, \Delta_i)=(0)
    \end{align*}
    for all $n\geq 1$, $i\in \{a,b,c\}$ and $j\in \{b,c\}$.
    Moreover
\[\begin{tikzcd}[ampersand replacement=\&]
	{(0)} \& {P_c} \& {P_b} \& {P_a} \& {L_a} \& {(0)}
	\arrow[from=1-1, to=1-2]
	\arrow["{r_\beta}", from=1-2, to=1-3]
	\arrow["{r_\alpha}", from=1-3, to=1-4]
	\arrow["{\pi_a}", from=1-4, to=1-5]
	\arrow[from=1-5, to=1-6]
\end{tikzcd}\]
is a projective resolution of $L_a$, where $r_\alpha$ and $r_\beta$ denote right multiplication by $\alpha$ resp. $\beta$ and $\pi_a$ is the canonical projection. Since $\Hom_A(P_b, \Delta_a)=\Hom_A(P_b, \Delta_c)=(0)$, this implies that
\begin{align*}
  \Ext^1_A(\Delta_a, \Delta_a)=(0)=\Ext^1_A(\Delta_a, \Delta_c).
\end{align*}
  Finally, since 
  \begin{align*}
    r_\beta^*:\Hom_A(P_b, \Delta_b)\rightarrow \Hom_A(P_c, \Delta_b)
  \end{align*}
  is injective, we also obtain that $\Ext^1_A(\Delta_a, \Delta_b)=(0)$. Thus $(\Delta_i)_i$ is standardisable.
\end{example}
The following definition is an adaptation of the definition of an exceptional sheaf which can be found in \cite{Rudakov}, where, as in definition \ref{def_standardisableset}, we replace the requirement on $\Ext^n$ for $n\geq 0$ by a requirement only on $\Ext^1$. 
\begin{definition}\label{def_exceptional}
  An $A$-module $M$ is called exceptional if $\Ext_A^1(M, M)=(0)$ and $\End_A(M)\cong \field$.
\end{definition}
\begin{lemma}\label{lemma_adaptedandexceptionalrewrite}
  The following statements are equivalent:
  \begin{enumerate}
    \item $\leq$ is adapted and $\Delta_L=\fakedelta_L$ is exceptional for every $L\in \Sim(A)$.
    \item Every composition factor $L'$ of $\rad(\fakedelta_{L})$ fulfills $L'<L$.
    \item $\Hom_A(\fakedelta_{L'}, \rad(\fakedelta_{L}))\neq (0)\Rightarrow L'<L$.
  \end{enumerate}
\end{lemma}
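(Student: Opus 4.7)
The plan is to prove the cycle $1 \Rightarrow 2 \Rightarrow 3 \Rightarrow 1$, using the results of Lemma~\ref{lemma_adapted} and Lemma~\ref{lemma_standardmodules} throughout, together with the characterisation of tops of submodules/quotients given by Lemma~\ref{lemma_compfactors}.

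For $1 \Rightarrow 2$, I would argue that every composition factor of $\fakedelta_L=\Delta_L$ is already $\leq L$ by Lemma~\ref{lemma_standardmodules}, so it only remains to rule out $L$ itself appearing in $\rad(\fakedelta_L)$. Assuming it did, Lemma~\ref{lemma_compfactors}~(2) would give a submodule of $\rad(\fakedelta_L)$ with top $L$, hence a non-zero map $P_L \to \rad(\fakedelta_L) \hookrightarrow \Delta_L$. Since all composition factors of the image lie in $\Delta_L$ and are thus $\leq L$, Lemma~\ref{lemma_standardmodules}~(2) factors this through $\pi_L$, producing a non-zero endomorphism of $\Delta_L$ with image contained in $\rad(\Delta_L)$. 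This contradicts $\End_A(\Delta_L) \cong \field$, since a non-zero scalar is an automorphism.

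For $2 \Rightarrow 3$, given any non-zero $f\colon \fakedelta_{L'} \to \rad(\fakedelta_L)$, its image is a non-zero quotient of $\fakedelta_{L'}$, hence has top which is a non-zero quotient of $\Top(\fakedelta_{L'}) = L'$, therefore equal to $L'$. Hence $L'$ is a composition factor of $\rad(\fakedelta_L)$, and the assumption 2 gives $L' < L$.

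The main work is $3 \Rightarrow 1$. First I would verify the adapted condition using the criterion $\Hom_A(\fakedelta_{L'},\fakedelta_L)\neq (0) \Rightarrow L'\leq L$ from Lemma~\ref{lemma_adapted}~(4). Given a non-zero map $f\colon\fakedelta_{L'}\to \fakedelta_L$, either $\im f \subseteq \rad(\fakedelta_L)$, in which case 3 gives $L' < L$ directly, or $\im f$ projects non-trivially onto $L = \Top(\fakedelta_L)$, in which case $\Top(\im f)$ surjects onto $L$; but $\Top(\im f)$ is a direct summand of $\Top(\fakedelta_{L'}) = L'$, forcing $L' \cong L$. Either way $L' \leq L$, so $\leq$ is adapted and $\Delta_L = \fakedelta_L$. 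To see $\Delta_L$ is exceptional, note that $\End_A(\Delta_L)\cong \field$ follows by the same dichotomy applied to an endomorphism: after subtracting a scalar multiple of the identity we may assume the induced map on the top is zero, i.e.\ the endomorphism lands in $\rad(\Delta_L)$, and then 3 forces it to vanish since $L \not< L$. For $\Ext^1_A(\Delta_L,\Delta_L)=(0)$ I would consider the short exact sequence $0 \to K \to P_L \to \Delta_L \to 0$ and observe that $\Top(K)$ is a sum of simples $L''>L$ (by definition of $\fakedelta_L$ and since $\Delta_L=\fakedelta_L$), while every composition factor of $\Delta_L$ is $\leq L$. Hence any map $K \to \Delta_L$ is zero: its image would be a submodule of $\Delta_L$ with top a quotient of $\Top(K)$, which is simultaneously $>L$ and $\leq L$, impossible. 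The vanishing of $\Hom_A(K,\Delta_L)$ then gives $\Ext^1_A(\Delta_L,\Delta_L)=(0)$ from the long exact sequence.

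The only delicate step is the adapted direction of $3 \Rightarrow 1$, where one must carefully extract the correct top from an image, but this is handled uniformly by the observation that the top of a quotient is a summand of the top of the original module.
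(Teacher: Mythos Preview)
Your proof is correct and follows essentially the same cycle $1\Rightarrow 2\Rightarrow 3\Rightarrow 1$ as the paper, with the same key ideas in each step. The only minor variation is in $3\Rightarrow 1$: the paper obtains $\Ext^1_A(\Delta_L,\Delta_L)=(0)$ by invoking Lemma~\ref{lemma_adapted} (so that $(\Delta_L)_L$ is standardisable), whereas you re-derive it directly from the projective presentation, which amounts to inlining the argument of Lemma~\ref{lemma_extdelta} for the case $L'=L$.
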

\begin{proof}
  \begin{enumerate}
    \item[$1\Rightarrow 2$] Recall that if $\leq$ is adapted, then $(\Delta_L)_L=(\fakedelta_L)_L$. Let $L'$ be a composition factor of $\rad(\fakedelta_{L})=\rad(\Delta_L)$. Then $L'\leq L$. If $L=L'$ then this induces a non-zero homomorphism $P_{L}\rightarrow \rad(\Delta_{L})$ and thus, by Lemma \ref{lemma_standardmodules}, an endomorphism $\Delta_{L}\rightarrow \rad(\Delta_{L})\rightarrow \Delta_{L}$ which is neither zero nor invertible. This is a contradiction. Thus $L'<L$.
    \item[$2\Rightarrow 3$] Suppose $\Hom_A(\fakedelta_{L'}, \rad(\fakedelta_{L}))\neq (0)$. Then, since $\fakedelta_{L'}$ has simple top $L'$, $L'$ is a composition factor of $\rad(\fakedelta_{L})$ and thus $L'<L$.
    \item[$3\Rightarrow 1$] 
    If $L\neq L'$ then  $$\Hom_A(\fakedelta_{L'}, \rad(\fakedelta_{L}))= (0)\Leftrightarrow \Hom_A(\fakedelta_{L'}, \fakedelta_{L})=(0).$$ Thus $$\Hom_A(\fakedelta_{L'}, \fakedelta_{L})\neq (0)\Rightarrow L'\leq L,$$ so that $\leq$ is adapted,  $\fakedelta_L=\Delta_L$ and $(\Delta_L)_L$ is a standardisable set by Lemma \ref{lemma_adapted}.\\
    In particular, $\Ext^1(\Delta_L, \Delta_L)=(0).$\\
    Moreover, $\Hom_A(\fakedelta_L, \rad(\fakedelta_L))=(0)$, so that any endomorphism of $\fakedelta_L$ is either surjective or zero.
    Thus $\End_A(\Delta_L)=\End_A(\fakedelta_L)\cong \field.$ \qedhere
  \end{enumerate}
\end{proof}
\begin{definition}
  We denote by $\mathcal{F}(\Delta)$ the full subcategory of $\modu A$ which contains all $A$-modules admitting a filtration by the $\Delta_L$, $L\in \Sim(A)$. In other words, an $A$-module $M$ is in $\mathcal{F}(\Delta)$ if and only if there is an integer $m\geq 0$ and an ascending sequence of submodules 
  \begin{align*}
    (0)=M_0\subset M_1 \subset \dots \subset M_m=M
  \end{align*}
  such that for every $1\leq i\leq m$ there is an $L_i\in \Sim(A)$ such that $M_i/M_{i-1}\cong \Delta_{L_i}$.
\end{definition}
\begin{proposition}\label{prop_addclosed}\cite[Lemma 1.4 and Lemma 1.5]{DlabRingel}
  The subcategory $\mathcal{F}(\Delta)$ is closed under direct sums, direct summands and extensions.
\end{proposition}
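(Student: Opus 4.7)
The plan is to handle the three closure properties in order of increasing difficulty, with direct summands being the principal obstacle.

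\textbf{Direct sums.} Given $M, N \in \mathcal{F}(\Delta)$ with $\Delta$-filtrations $(M_i)_{i=0}^m$ and $(N_j)_{j=0}^n$, the concatenated chain
\begin{align*}
  0 \subset M_1 \oplus 0 \subset \dots \subset M \oplus 0 \subset M \oplus N_1 \subset \dots \subset M \oplus N
\end{align*}
is a $\Delta$-filtration of $M \oplus N$, its subquotients being precisely the $\Delta$-factors of $M$ followed by those of $N$.

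\textbf{Extensions.} Given a short exact sequence $0 \to M' \xrightarrow{\iota} M \xrightarrow{\pi} M'' \to 0$ with $M', M'' \in \mathcal{F}(\Delta)$ and respective $\Delta$-filtrations $(M'_i)_{i=0}^m$, $(M''_j)_{j=0}^n$, the concatenation
\begin{align*}
  0 \subset \iota(M'_1) \subset \dots \subset \iota(M') \subset \pi^{-1}(M''_1) \subset \dots \subset \pi^{-1}(M'') = M
\end{align*}
is a $\Delta$-filtration of $M$; the subquotients below $\iota(M')$ reproduce the $\Delta$-factors of $M'$, while the short exact sequences $0 \to \iota(M') \to \pi^{-1}(M''_j) \to M''_j \to 0$ identify those above with the $\Delta$-factors of $M''$.

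\textbf{Direct summands.} This is the principal obstacle, and I would prove it by induction on $\dim_\field M$. Suppose $M = N \oplus N' \in \mathcal{F}(\Delta)$, and pick a simple $L$ which is maximal with respect to $\leq$ among those $L'$ in the support of $M$. Consider the trace $T_L(X) := \sum_{\varphi \in \Hom_A(\Delta_L, X)} \im(\varphi)$ for a module $X$; functoriality of $T_L$ immediately gives $T_L(M) = T_L(N) \oplus T_L(N')$, so the trace respects the direct sum decomposition. Two substantive claims remain: that $T_L(M) \cong n\Delta_L$ for some $n \geq 0$ (so that $T_L(N), T_L(N') \in \mathcal{F}(\Delta)$), and that $M/T_L(M) \in \mathcal{F}(\Delta)$ has strictly smaller dimension. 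Both follow from the maximality of $L$: applying Lemma \ref{lemma_extdelta} with the roles of its two arguments swapped yields $\Ext^1_A(\Delta_L, \Delta_{L'}) = 0$ for all $L' \neq L$ in the support of $M$, so the $\Delta_L$-layers in a $\Delta$-filtration of $M$ can be commuted to the bottom past the other factors, and the special case $L = L'$ together with Lemma \ref{lemma_adaptedandexceptionalrewrite} gives $\Ext^1_A(\Delta_L, \Delta_L) = 0$, so these bottom layers assemble into a direct sum $n\Delta_L$ which coincides with $T_L(M)$. The induction hypothesis applied to $M/T_L(M) \cong N/T_L(N) \oplus N'/T_L(N')$ then gives $N/T_L(N), N'/T_L(N') \in \mathcal{F}(\Delta)$, and the already-established closure under extensions applied to $0 \to T_L(N) \to N \to N/T_L(N) \to 0$ and its analogue for $N'$ completes the proof.

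The principal technical obstacle is the rearrangement step which simultaneously identifies $T_L(M)$ with $n\Delta_L$ and keeps $M/T_L(M)$ inside $\mathcal{F}(\Delta)$. Both rely on $\Ext^1_A(\Delta_L, \Delta_{L'}) = 0$ for every $L' \neq L$ in the support, which is precisely where the maximality of $L$ combined with Lemma \ref{lemma_extdelta} is used; the implicit adaptedness required to invoke these lemmas (ensuring $\Delta = \fakedelta$ and exceptionality of each $\Delta_L$) is the setting in which the proposition is meant to be applied.
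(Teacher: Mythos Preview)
The paper does not supply its own proof here; the proposition is quoted directly from Dlab--Ringel. Your argument is essentially the one given there: closure under sums and extensions by concatenation of filtrations, and closure under summands by induction on dimension, peeling off the trace of $\Delta_L$ for $L$ maximal, using that $(\Delta_L)_L$ is standardisable so that this $\Delta_L$ is Ext-projective among the filtration factors.

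Two small remarks. First, once adaptedness is in force so that $\Delta=\fakedelta$, Lemma~\ref{lemma_extdelta} alone (with $L'=L$) already yields $\Ext^1_A(\Delta_L,\Delta_L)=0$; the appeal to Lemma~\ref{lemma_adaptedandexceptionalrewrite} and to exceptionality is unnecessary, since the only further fact you need is that $\End_A(\Delta_L)$ is local, and that follows from $\Delta_L$ having simple top. Second, your closing caveat is exactly right: the summand argument genuinely requires $(\Delta_L)_L$ to be standardisable so that a maximal $L$ has $\Ext^1_A(\Delta_L,\Delta_{L'})=0$ for all $L'$ in the support, and this is precisely the setting of Dlab--Ringel (total order, hence adapted) and the only setting in which the paper invokes the proposition (inside the proof of Theorem~\ref{thm_ghiff}, under the quasi-hereditary hypothesis of Definition~\ref{def_qh}).
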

\begin{definition} \label{def_qh}\cite{CPS, Ringel, stratified}
  An algebra $A$ together with an adapted partial order $\leq$ on $\Sim(A)$ is called 
  \begin{enumerate}
    \item left standardly stratified, if $A\in \mathcal{F}(\Delta)$
    \item quasi-hereditary, if additionally $\Delta_L$ is exceptional for all $L\in \Sim(A)$
    \item strongly quasi-hereditary, if additionally every $\Delta_L$ has projective dimension one.
    \item directed, if $\Delta_L\cong L$ for all $L\in \Sim(A)$.
  \end{enumerate}
\end{definition}
\begin{definition}\label{def_borel}\cite[p. 405]{Koenig}\cite[Definition 3.4]{BKK}
  Let $(A, \leq)$ be a quasi-hereditary algebra. Then a subalgebra $B\subseteq A$ is called an exact Borel subalgebra if
  there is a bijection $i:\Sim(B)\rightarrow \Sim(A)$ such that
  \begin{enumerate}
    \item $A$ is projective as a right $B$-module,
    \item $B$ is directed
    \item $A\otimes_B L=\Delta_{i(L)}$ for all $L\in \Sim(B)$.
  \end{enumerate}
  Moreover, it is called
  \begin{enumerate}
    \item a strong exact Borel subalgebra if there is a maximal semisimple subalgebra of $A$ which is also a semisimple subalgebra of $B$;
    \item a homological exact Borel subalgebra if the induced maps
    \begin{align*}
      A\otimes_B- :\Ext_B^*(L, L')\rightarrow \Ext_A^*(\Delta_L, \Delta_{L'})
    \end{align*}
    are isomorphisms in degree greater than or equal to two and epimorphisms in degree one for all $L, L'\in \Sim(B)$;
    \item a normal exact Borel subalgebra, if there is a splitting of the inclusion $\iota:B\rightarrow A$ whose kernel is a right ideal in $A$;
    \item and a regular exact Borel subalgebra if it is normal and the induced maps
    \begin{align*}
      A\otimes_B- :\Ext_B^*(L, L')\rightarrow \Ext_A^*(\Delta_{i(L)}, \Delta_{i(L')})
    \end{align*}
    are isomorphisms in degree greater than or equal to one for all $L, L'\in \Sim(B)$.
  \end{enumerate}
\end{definition}
\begin{lemma}\label{lemma_strongBorel}
  Suppose $B$ is an exact Borel subalgebra of $A$. Then $B$ is a strong exact Borel subalgebra of $A$ if and only if $A\rad(B)\subseteq \rad(A)$.
\end{lemma}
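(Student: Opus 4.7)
The forward direction is a routine Wedderburn--Malcev argument. If $S$ is a maximal semisimple subalgebra of $A$ contained in $B$, then $A = S \oplus \rad A$; writing $b \in B$ as $b = s + r$ with $s \in S$ forces $s \in B$ and hence $r = b - s \in B \cap \rad A$, so $B = S \oplus (B \cap \rad A)$. The summand $B \cap \rad A$ is a nilpotent two-sided ideal of $B$ and must therefore equal $\rad B$, giving $\rad B \subseteq \rad A$ and hence $A \rad B \subseteq \rad A$.

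For the converse, the assumption $A\rad B \subseteq \rad A$ first implies $\rad B = B \cap \rad A$ (the inclusion $\rad B \subseteq \rad A$ comes from $1 \in A$, and $B \cap \rad A$ is a nilpotent ideal of $B$, hence contained in $\rad B$). This induces an injection of semisimple algebras $B/\rad B \hookrightarrow A/\rad A$, and it suffices to show this is an isomorphism; for then any semisimple complement $S_B$ of $\rad B$ in $B$ satisfies $S_B \cap \rad A = S_B \cap \rad B = 0$ and maps isomorphically onto $A/\rad A$, yielding $A = S_B \oplus \rad A$, so $S_B \subseteq B$ is maximal semisimple in $A$.

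To prove the isomorphism I would compare primitive idempotents. For each simple $B$-module $L$ pick a primitive idempotent $e_L \in B$ with $Be_L \cong P_L$; combining $A\otimes_B L = \Delta_{i(L)}$ (condition 3 of the Borel definition) with $A\rad B \subseteq \rad A$, the top of $A \otimes_B L$ simplifies to
\[
\Top_A(A \otimes_B L) \;=\; Ae_L/(\rad A \cdot e_L + A\rad B \cdot e_L) \;=\; (A/\rad A)\,\bar e_L \;\cong\; i(L),
\]
so $\bar e_L$ is primitive in $A/\rad A$ of type $i(L)$, which forces $e_L$ to be primitive in $A$ (a nontrivial orthogonal refinement in $A$ would descend to one in $A/\rad A$, using that the only idempotent in $\rad A$ is zero). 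A full orthogonal family $\{e_{L,l}\}$ of primitive idempotents of $B$ summing to $1 = 1_A$ then gives a full family in $A$, and by Krull--Schmidt the multiplicity of $P^A_{i(L)}$ in $A$ equals both the number of indices $l$, namely $\dim_\field L$, and the standard value $\dim_\field i(L)$. Hence $\dim L = \dim i(L)$ for each $L$, and summing squares yields $\dim B/\rad B = \dim A/\rad A$. This final dimension count is the main obstacle: when the algebras are not basic, the weaker hypothesis $\rad B \subseteq \rad A$ together with the bijection $\Sim(B) \cong \Sim(A)$ does not suffice, and one genuinely needs both $A\otimes_B L = \Delta_{i(L)}$ and the full strength of $A\rad B \subseteq \rad A$ in order to transport primitive idempotents of $B$ to primitive idempotents of $A$ and match multiplicities.
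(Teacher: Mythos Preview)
Your argument is correct. Both directions go through: the forward direction is a clean Wedderburn--Malcev reduction giving $\rad B = B\cap\rad A$, and in the converse the crucial step---that under $A\rad B\subseteq\rad A$ each primitive idempotent $e_L$ of $B$ stays primitive in $A$ because $\Top_A(Ae_L)\cong i(L)$ is simple---is exactly right and yields the multiplicity match $\dim L=\dim i(L)$.

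The paper's proof reaches the same dimension equality by a different, more symmetric computation: it introduces the obstruction module $X:=A\rad(B)/(\rad(A)\cap A\rad(B))$ and, using $A/A\rad(B)\cong A\otimes_B L^B\cong\bigoplus_i[L^B:L_i^B]\Delta_{L_i^A}$, decomposes $L^A\cong X\oplus\bigoplus_i[L^B:L_i^B]L_i^A$. Reading off multiplicities gives $\dim L_i^A=[X:L_i^A]+\dim L_i^B$, and summing produces a formula $\dim L^A-\dim L^B=\dim X+\sum_i[X:L_i^A]\dim L_i^B$ from which both implications follow at once ($X=0$ iff $A\rad B\subseteq\rad A$). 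So the paper avoids idempotents entirely and treats both directions simultaneously via one module-theoretic identity, while your approach splits the two directions and handles the converse by transporting a complete system of primitive idempotents from $B$ to $A$. The underlying observation---that $\Top_A(A\otimes_B L^B)$ accounts for all of $L^A$ precisely when $A\rad B\subseteq\rad A$---is the same in both; your version is more concrete, the paper's more uniform.
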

\begin{proof}
  Let $L^A:=A/\rad(A)$ and $L^B:=B/\rad(B)$. Then $L^A$ and $L^B$ are up to isomorphism the unique maximal semisimple subalgebras of $A$ and $B$ respectively.\\
  In particular, a semisimple subalgebra of $A$ is a maximal semisimple subalgebra if and only if has the same vector space dimension as $L^A$.
  Since any semisimple subalgebra of $B$ is also a semisimple subalgebra of $A$, this means that $B$ contains a maximal semisimple subalgebra of $A$ if and only if $\dim_{\field}L^B=\dim_{\field}L^A$.\\
  Thus $B$ is a strong exact Borel subalgebra of $A$ if and only if $\dim_{\field}L^B=\dim_{\field}L^A$.
  Let $X:=A\rad(B)/(\rad(A)\cap A\rad(B))$.
Then, as $A$-modules, 
\begin{align*}
  L^A= A/\rad(A) &\cong ((\rad(A)+A\rad(B))/\rad(A))\oplus A/(\rad(A)+A\rad(B))\\
  &\cong A\rad(B)/(\rad(A)\cap A\rad(B))\oplus \topp(A/A\rad(B))\\
  &\cong X\oplus  \topp\left(A\otimes_B B/\rad(B)\right)\\
  &\cong X\oplus  \topp\left(\bigoplus_{i}[\topp B: L_i^B]\Delta_{L_i^A}\right)\\
  &\cong X\oplus \bigoplus_{i}[L^B: L_i^B]L_i^A.
\end{align*}
Thus, 
\begin{align*}
  \dim_{\field}L_i^A&=[L^A:L_i^A]=[X:L_i^A]+[L^B: L_i^B]=[X:L_i^A]+\dim_{\field}L_i^B,\end{align*}
 so that
\begin{align*}
  \dim_{\field} L^A&=\dim_{\field} X+\sum_i[L^B: L_i^B]\dim_{\field} L_i^A\\
  &= \dim_{\field} X+\sum_i[L^B: L_i^B]([X:L_i^A]+\dim_{\field}(L_i^B))\\
  &=\dim_{\field} X+\sum_{i}[X:L_i^A][L^B:L_i^B]+ \dim_{\field}L^B.
\end{align*}
Hence $B$ is a strong exact Borel subalgebra of $A$ if and only if $X=(0)$, i.e. if $A\rad(B)\subseteq B$.
\end{proof}
\section{Quasi-Hereditary Algebras with a Group Action}\label{s3}
Throughout, let, as before, $A$ be a finite-dimensional $\field$-algebra and $G$ be a group acting on $A$ via automorphisms such that the order of $G$ does not divide the characteristic of $\field$. 
\begin{definition}\label{def_Gequivorder}
  A partial order $\leq_A$ on $\Sim(A)$ is called $G$-equivariant if
  \begin{align*}
    L<_AL' \Leftrightarrow gL<_AhL' \textup{ for all } g,h\in G.
  \end{align*}
  On the other hand, a partial order $\leq_{A*G}$ on $\Sim(A*G)$ is called $G$-stable if
  \begin{align*}
    S<_{A*G}S' \Leftrightarrow S\otimes V<_{A*G} S' \otimes W
  \end{align*}
  for all $\field G$-modules $V, W$.
  \end{definition}
  
\begin{definition}\label{def_inducedorder}
  \begin{enumerate}
    \item Let $\leq$ be  a $G$-equivariant partial order on $\Sim(A)$. Then we define a partial order $\leq_{G}$ on $\Sim(A*G)$ via
    \begin{align*}
      S<_GS':\Leftrightarrow _{A|}S<_{|A}S'.
    \end{align*}
    \item Let $\leq'$ be  a $G$-stable partial order on $\Sim(A*G)$. Then we define a partial order $\leq'_{|G}$ on $\Sim(A)$ via
    \begin{align*}
      L<'_{|G}L':\Leftrightarrow \field G\otimes L <'\field G\otimes L'.
    \end{align*}
  \end{enumerate}
\end{definition}
Hence we define a strict partial order $(<_{A})_G$ as the pullback of a strict partial order $<_A$ along the map
\begin{align*}
\add{\Sim(A*G)}\rightarrow \add(\Sim(A)), M\mapsto {}_{|A}M,
\end{align*}
and similarly 
a strict partial order $(<_{A*G})_{|G}$ as the pullback of a strict partial order $<_{A*G}$ along the map
\begin{align*}
\add{\Sim(A)}\rightarrow \add(\Sim(A*G)), M\mapsto I_GM.
\end{align*}
Note that since the above maps are not necessarily injective, the pullbacks of $\leq_A$ resp. $\leq_{A*G}$ along these maps are not necessarily partial orders, since asymmetry does in general not hold.\\
However, since the pullback of a strict partial order is always a strict partial order, the above yield well-defined partial orders $\leq_G$ and $\leq'_{|G}$  even for a not necessarily $G$-equivariant partial order $\leq$ and a not necessarily $G$-stable partial order $\leq'$.\\
However, if e.g. $\leq$ is not $G$-equivariant, then there may be some simple $A*G$ modules $S\neq S'$ such that $L<_AL'$, but $L''\nless L'''$ for some simple summands of $L, L''$ of $S$ and $L', L'''$ of $S'$, so $S$ and $S'$ would be incomparable with respect to $\leq_A$, even though all summands of ${}_{|A}S$ and ${}_{|A}S'$ may be comparable. Hence even a total order $\leq$ might result in an empty order $\leq_G$, and so there is little hope to conclude adaptedness of $\leq_G$ from adaptedness of $\leq$.\\
Similar considerations hold for a non $G$-stable partial order $\leq'$ and its induced partial order $<'_{|G}$.
\begin{proposition}\label{prop_bijectionorders}
  \begin{enumerate}
    \item Let $\leq$ be a $G$-equivariant partial order on $\Sim(A)$. Then $\leq_{G}$ is the unique $G$-stable partial order such that
    \begin{align*}
      L< L'\Leftrightarrow \field G\otimes  L<_{G}\field G\otimes L'.
    \end{align*}
    \item Let $\leq'$ be a $G$-stable partial order on $\Sim(A*G)$. Then $\leq'_{|G}$ is the unique $G$-equivariant partial order on $\Sim(A)$ such that 
    \begin{align*}
      S<'S'\Leftrightarrow _{A|}S(<'_{|G})_{A|}S'.
    \end{align*}
  \end{enumerate}
\end{proposition}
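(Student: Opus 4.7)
The two parts are dual to each other, so my plan is to describe a joint strategy and focus on part~1. The main ingredients are already at my disposal: Corollary~\ref{corollary_dividessimples} together with the adjunction between $I_G$ and $R_G$ provide the dictionary between simples of $A$ and simples of $A*G$, while Proposition~\ref{prop_IR+RI} and the explicit description in Proposition~\ref{prop_simplesofA*G} control how this dictionary interacts with the $G$-action. The central technical fact I will use repeatedly, read off from Proposition~\ref{prop_simplesofA*G}, is that for any $L\in\Sim(A)$ and any simple $S\mid I_GL$, the restriction $R_GS$ equals (as an $A$-module) a positive multiple of $\bigoplus_{z\in Z_L}zL$; in particular $L$ itself is a summand of $R_GS$.

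For part~1, I first verify that $\leq_G$ is $G$-stable. From Definition~\ref{def_MtimesV}, for any $A*G$-module $S$ and any $\field G$-module $V$, one has $R_G(S\otimes V)\cong R_G(S)^{\dim V}$ as $A$-modules, so the simple summands of $R_G(S\otimes V)$ and $R_GS$ coincide. This immediately gives $S\otimes V<_G S'\otimes W \iff S<_GS'$. For the biconditional, the forward direction follows by taking any $S\mid I_GL$ and $S'\mid I_GL'$, observing that all simple summands of $R_GS$ are of the form $gL$ and those of $R_GS'$ of the form $g'L'$, and applying $G$-equivariance of $\leq$. The backward direction uses the central fact above: since $L\mid R_GS$ and $L'\mid R_GS'$ for suitable choices of $S,S'$, the condition $R_GS<_AR_GS'$ directly delivers $L<L'$.

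For uniqueness in part~1, suppose $\leq''$ is another $G$-stable partial order satisfying the same biconditional. For any simples $S,S'\in\Sim(A*G)$, pick $L,L'\in\Sim(A)$ with $S\mid I_GL$ and $S'\mid I_GL'$ via Corollary~\ref{corollary_dividessimples}. Then $G$-stability of $\leq''$ applied with $V=W=\field G$ gives $S<''S'\iff S\otimes\field G<''S'\otimes\field G$, and by Proposition~\ref{prop_IR+RI}(1) the right-hand side is the comparison $I_GR_GS<''I_GR_GS'$. But $I_GR_GS$ is a direct sum of copies of $I_GL''$ for $L''\mid R_GS$, and similarly for $S'$. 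Applying the biconditional for $\leq''$ reduces the comparison to $L''<L'''$ for all such summands, which is precisely the defining condition of $S<_GS'$. Hence $\leq''=\leq_G$.

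Part~2 proceeds by swapping the roles of $I_G$ and $R_G$. The one new small step is to check that $\leq'_{|G}$ is $G$-equivariant, which follows from the isomorphism $\field G\otimes gL\cong\field G\otimes L$ of $A*G$-modules given by $h\otimes m\mapsto hg\otimes m$ (a direct check of the $A$- and $G$-actions). The biconditional and uniqueness then mirror part~1, using $G$-stability of $\leq'$ where part~1 used $G$-equivariance of $\leq$, and replacing the step ``$L\mid R_GS$'' by its adjoint form ``$S\mid I_GL$''. The only genuine obstacle, if one can call it that, is the careful bookkeeping around the equivalence ``$S\mid I_GL\iff L\mid R_GS$'' and its interaction with tensoring by $\field G$; all the real mathematical content is already packaged in Section~\ref{s1}.
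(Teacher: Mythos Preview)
Your proposal is correct and follows essentially the same route as the paper: both arguments verify the claimed properties directly from the definitions, and for uniqueness both tensor with $\field G$ to pass from $S$ to $S\otimes\field G\cong I_GR_GS$, then use the dictionary between summands of $R_GS$ and the $G$-orbit of $L$ to transport the comparison through the biconditional. One small remark: your ``central technical fact'' that the simple summands of $R_GS$ form a single $G$-orbit does not require the full force of Proposition~\ref{prop_simplesofA*G}; it follows immediately from $S\mid I_GL$ and $R_GI_GL\cong\bigoplus_{g}gL$ (Proposition~\ref{prop_IR+RI}(2)), which is exactly what the paper uses. Apart from this cosmetic over-citation, your argument and the paper's coincide.
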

\begin{proof}
  \begin{enumerate}
    \item Clearly, $\leq_G$ is $G$-stable, and 
    \begin{align*}
      L<L'\Leftrightarrow \field G\otimes  L<_{G}\field G\otimes L'.
    \end{align*}
    Now suppose that $\leq'$ is another $G$-stable partial order on $\Sim(A*G)$ such that 
    \begin{align*}
      L<L'\Leftrightarrow \field G\otimes  L<'\field G\otimes L'.
    \end{align*}
    Let $S, S'\in \Sim(A*G)$.
    Then there are $L, L'\in \Sim(A)$ such that $L|_{A|}S$, $L'|_{A|}S'$.
    Thus $\field G\otimes L| \field G\otimes {}_{A|}S\cong S\otimes \field G$ and $\field G\otimes L'| \field G\otimes {}_{A|}S'\cong S'\otimes \field G$.\\
    Moreover, by Corollary \ref{corollary_dividessimples}, there are $L'', L'''\in \Sim(A)$ such that $S|\field G\otimes L'', S'|\field G\otimes L'''$.
    In particular, $L|{}_{A|}\field G\otimes L''$ and $L'|{}_{A|}\field G\otimes L'''$,
    so that $L\cong gL''$  and $L'\cong hL'''$ for some $g, h\in G$. Hence $\field G\otimes L\cong \field G\otimes L''$ and $\field G\otimes L'\cong \field G\otimes L'''$.
    Thus \begin{align*}
      S<'S'\Leftrightarrow S\otimes \field G<'S'\otimes \field G
      &\Rightarrow \field G\otimes L<'\field G\otimes L'\\
      &\Leftrightarrow L<L'
      \Leftrightarrow \field G\otimes L<_G\field G\otimes L'
      \Rightarrow S<_{G}S',
    \end{align*}
    and analogously $S<_GS'\Rightarrow S<' S'.$
\item Clearly, $<'_{|G}$ is $G$-equivariant and 
\begin{align*}
  {}_{A|}S<'_{|G}{}_{A|}S'&\Leftrightarrow \field G\otimes _{A|}S<'\field G\otimes _{A|}S'\\
  &\Leftrightarrow S\otimes \field G<'S'\otimes \field G\Leftrightarrow S<S'.
\end{align*}
Moreover, if $\leq$ is another $G$-equivariant partial order on $\Sim(A)$ such that 
\begin{align*}
  S<'S'\Leftrightarrow _{A|}S<_{A|}S'.
\end{align*}
then 
\begin{align*}
  L<L'\Leftrightarrow \bigoplus_{g\in G}gL<\bigoplus_{g\in G}gL'
  \Leftrightarrow _{A|}\field G\otimes L<_{A|}\field G\otimes L'
  \Leftrightarrow \field G\otimes L<'\field G\otimes L'
  \Leftrightarrow L<'_G L'
\end{align*}
for all $L, L'\in \Sim(A)$.
  \end{enumerate}
\end{proof}
\begin{corollary}\label{corollary_bijectionorders}
  Let $\leq$ be a $G$-equivariant partial order on $\Sim(A)$. Then $\leq$ coincides with $(\leq_G)_{|G}$.\\
  Let $\leq'$ be a $G$-stable partial order on $\Sim(A*G)$. Then $\leq'$ coincides with $(<'_{|G})_{G}$.
  Thus the assignments
  \begin{align*}
    \{G\textup{-equivariant partial} & \textup{ orders on }\Sim(A)\}\\
    &\rightarrow \{G\textup{-stable partial orders on }\Sim(A*G)\},\\ 
    \leq &\mapsto \leq_G
  \end{align*}
  and
  \begin{align*}
    \{G\textup{-stable partial orders on }&\Sim(A*G)\}\\ &\rightarrow \{G\textup{-equivariant partial orders on }\Sim(A)\},\\ \leq' &\mapsto \leq'_{|G}
  \end{align*}
  are mutually inverse bijections.
\end{corollary}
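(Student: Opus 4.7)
The strategy is to read off both equalities directly from Proposition \ref{prop_bijectionorders}, which already provides the essential characterizations of $\leq_G$ and $\leq'_{|G}$ in terms of the original partial orders. No further module-theoretic machinery is needed; once the two compositions are shown to be the identity on their respective domains, the bijection statement is purely formal.

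For the first identity, I would begin by observing that by Proposition \ref{prop_bijectionorders}(1) the partial order $\leq_G$ is $G$-stable, so Proposition \ref{prop_bijectionorders}(2) applies with $\leq' := \leq_G$ and yields that $(\leq_G)_{|G}$ is a well-defined $G$-equivariant partial order on $\Sim(A)$. Unfolding the definition of $_{|G}$, we have $L\,(\leq_G)_{|G}\,L' \Leftrightarrow \field G \otimes L \leq_G \field G \otimes L'$, and by the characterization of $\leq_G$ in Proposition \ref{prop_bijectionorders}(1) this is equivalent to $L \leq L'$. Hence $\leq$ and $(\leq_G)_{|G}$ coincide.

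The second identity is symmetric. By Proposition \ref{prop_bijectionorders}(2), $\leq'_{|G}$ is $G$-equivariant, so Proposition \ref{prop_bijectionorders}(1) applies with $\leq := \leq'_{|G}$ and yields a well-defined $G$-stable partial order $(\leq'_{|G})_G$ on $\Sim(A*G)$. Unfolding the definition of $_G$, we have $S\,(\leq'_{|G})_G\,S' \Leftrightarrow {}_{A|}S \leq'_{|G} {}_{A|}S'$, which by the characterization in Proposition \ref{prop_bijectionorders}(2) is equivalent to $S \leq' S'$.

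Having shown that both assignments compose to the identity in both directions, I would conclude that they are mutually inverse bijections between $G$-equivariant partial orders on $\Sim(A)$ and $G$-stable partial orders on $\Sim(A*G)$. There is no substantial obstacle here: the proof is essentially a diagram chase through the definitions, with Proposition \ref{prop_bijectionorders} supplying all of the content. The only point that merits a moment of care is verifying that the uniqueness clauses in Proposition \ref{prop_bijectionorders} are applied to the correct intermediate orders, which is straightforward once one tracks the $G$-stable/$G$-equivariant labels.
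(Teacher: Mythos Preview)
Your proposal is correct and follows precisely the route the paper intends: the corollary has no separate proof in the paper because it is meant to be read off directly from the two characterizations in Proposition~\ref{prop_bijectionorders}, which is exactly what you do. One small notational quibble: the definitions and the biconditionals in Proposition~\ref{prop_bijectionorders} are stated for the strict relations $<$, $<_G$, $<'_{|G}$, so your unfolding should be written with strict inequalities (this suffices since two partial orders agree iff their strict parts do); also, your closing remark about applying the uniqueness clauses is unnecessary, as your argument already works purely by unfolding the definitions and using the biconditionals.
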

From now on, let $\leq_A$ be a $G$-equivariant partial order on $\Sim(A)$ and $\leq_{A*G}$ the corresponding induced order on $\Sim(A*G)$, or the other way around.
\begin{lemma}\label{lemma_gdelta=deltag}
  For every $g\in G$, $L\in \Sim(A)$ we have $g\Delta_{L}\cong \Delta_{g(L)}$ and $g\fakedelta_{L}\cong \fakedelta_{g(L)}$
\end{lemma}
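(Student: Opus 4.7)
The plan is to exploit the fact that $M \mapsto gM$ is an exact autoequivalence of $\modu A$, so it commutes with every construction appearing in the definition of $\Delta_L$ and $\fakedelta_L$; the $G$-equivariance of $\leq_A$ is then exactly what one needs to reindex the defining sum.

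First I would observe that, by Definition~\ref{def_gM}, each $g$ acts as an autoequivalence of $\modu A$. In particular it is exact, preserves indecomposable projectives, and satisfies $g\topp(M) = \topp(gM)$. Combined with the fact that $g$ sends the simple $L$ to $gL$, this yields $gP_L \cong P_{gL}$. Moreover, for any $L, L' \in \Sim(A)$, the map
\begin{align*}
g_{*} : \Hom_A(P_{L'}, P_L) \to \Hom_A(P_{gL'}, P_{gL}), \qquad \varphi \mapsto g(\varphi)
\end{align*}
is a $\field$-linear bijection (with inverse induced by $g^{-1}$), and $g$ sends $\im(\varphi)$ to $\im(g(\varphi))$ while taking quotients to quotients.

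Next I would apply the autoequivalence $g$ to the defining presentation of $\Delta_L$, obtaining
\begin{align*}
g\Delta_L \;\cong\; P_{gL} \Big/ \sum_{\substack{L' \nleq_A L \\ \varphi \in \Hom_A(P_{L'}, P_L)}} \im(g(\varphi)).
\end{align*}
Reindexing by $L'' := gL'$ and $\psi := g(\varphi) \in \Hom_A(P_{L''}, P_{gL})$, the bijection above shows that $\psi$ ranges over all of $\Hom_A(P_{L''}, P_{gL})$ as $\varphi$ varies. By $G$-equivariance of $\leq_A$, the condition $L' \nleq_A L$ is equivalent to $L'' \nleq_A gL$, so the sum on the right becomes the defining sum of $\Delta_{gL}$. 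The argument for $\fakedelta_L$ is word-for-word the same, with $L' \nleq_A L$ replaced by $L' >_A L$, a condition which is again preserved under the $G$-action.

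The one point that needs care, but is not a genuine obstacle, is the identification $gP_L \cong P_{gL}$; it follows immediately from the fact that an autoequivalence of $\modu A$ must permute isomorphism classes of indecomposable projectives compatibly with its action on simple tops. Everything else is bookkeeping: passing an exact equivalence through a colimit-type quotient, and using the hypothesis on $\leq_A$ to match the resulting index set with the one defining the standard (respectively pseudostandard) module at $gL$.
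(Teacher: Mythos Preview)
Your proof is correct and follows exactly the approach the paper takes: the paper's proof is the single sentence ``This is a direct consequence of the fact that $g$ induces an order preserving automorphism of $\modu A$,'' and your argument simply unpacks what that sentence means. You have supplied the details (the identification $gP_L\cong P_{gL}$, the reindexing of the defining sum via $G$-equivariance of $\leq_A$) that the paper leaves implicit, but there is no difference in strategy.
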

\begin{proof}
  This is a direct consequence of the fact that $g$ induces an order preserving automorphism of $\modu A$.
\end{proof}
\begin{proposition}\label{prop_IDelta}
  For every simple $A$-module $L\in \Sim(A)$.
  \begin{align*}
    \field G\otimes \fakedelta_{L}\cong\bigoplus_{S\in \Sim(A*G)}[\field G\otimes L:S]\fakedelta_{S}
  \end{align*}
  where $[\field G\otimes L:S]$ denotes the multiplicity of the simple summand $S$ in $\field G\otimes L$.
\end{proposition}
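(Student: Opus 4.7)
The plan is to realize both $X := I_G(\fakedelta_L) = \field G \otimes \fakedelta_L$ and $Y := \bigoplus_S n_S \fakedelta_S$ (with $n_S := [\field G \otimes L : S]$) as quotients of the common projective $A*G$-module $I_G(P_L)$, and then show that the two kernels coincide. Combining Lemma~\ref{lemma_radical} with the explicit formula for the $A*G$-action on $I_G(M)$, a short direct calculation gives $\rad(I_G M) = I_G(\rad M)$ for every $A$-module $M$, so $I_G$ commutes with $\Top$. Since $I_G$ is moreover exact and preserves projectives by Proposition~\ref{prop_IR+RI}, $I_G(P_L)$ is the projective cover of $I_G(L) = \field G \otimes L$, and therefore decomposes as $I_G(P_L) \cong \bigoplus_S n_S P_S$. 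Applying $I_G$ to $\fakepi_L$ produces a surjection $\pi_X : I_G(P_L) \twoheadrightarrow X$ with kernel $I_G(\ker \fakepi_L)$, while the direct sum of the $\fakepi_S$'s yields $\pi_Y : I_G(P_L) \twoheadrightarrow Y$ with kernel $\bigoplus_S n_S \ker \fakepi_S$.

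For the inclusion $\ker \pi_X \subseteq \ker \pi_Y$, I would argue with generators: by definition $\ker \fakepi_L$ is generated by images of morphisms $\varphi : P_{L'} \to P_L$ with $L' >_A L$, so $I_G(\ker \fakepi_L)$ is generated by the images of $I_G(\varphi) : I_G(P_{L'}) \to I_G(P_L)$. Decomposing $I_G(P_{L'}) \cong \bigoplus_{S'} [\field G \otimes L' : S'] P_{S'}$, the $G$-equivariance of $\leq_A$ combined with the definition of $\leq_{A*G}$ yields $S' >_{A*G} S$ for every simple constituent $S'$ of $\field G \otimes L'$ and every $S \mid \field G \otimes L$; hence each component $P_{S'} \to P_S$ of $I_G(\varphi)$ has image in $\ker \fakepi_S$ by the definition of $\fakepi_S$, establishing the inclusion.

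For the reverse inclusion $\ker \pi_Y \subseteq \ker \pi_X$, I would invoke Lemma~\ref{lemma_standardmodules}(4) one summand at a time. Exactness of $I_G$ implies that the composition factors of $X$ are the simple constituents of $\field G \otimes L''$ for composition factors $L''$ of $\fakedelta_L$, which by Lemma~\ref{lemma_standardmodules}(4) satisfy $L'' \not>_A L$; a short case distinction using $G$-equivariance then yields $S'' \not>_{A*G} S$ for each such constituent $S''$ and every $S \mid \field G \otimes L$. Consequently, for each summand $P_S$ of $I_G(P_L)$, the composite map $P_S \hookrightarrow I_G(P_L) \xrightarrow{\pi_X} X$ satisfies the hypotheses of Lemma~\ref{lemma_standardmodules}(4) and therefore factors through $\fakepi_S : P_S \to \fakedelta_S$; summing these factorizations yields $\pi_X = \alpha \circ \pi_Y$ for some $\alpha : Y \to X$, which gives the reverse inclusion and hence the desired isomorphism. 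The main technical hurdle, invoked in both directions, is the translation of order relations on $\Sim(A)$ into order relations on $\Sim(A*G)$ via $G$-equivariance; without this compatibility, the kernels would not visibly match and neither factorization would go through.
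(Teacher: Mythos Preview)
Your proof is correct and follows the same overall strategy as the paper: identify $I_G(P_L)\cong\bigoplus_S n_S P_S$ as a common projective cover, and then use the universal property of the pseudostandard modules (Lemma~\ref{lemma_standardmodules}) together with the order translation coming from $G$-equivariance of $\leq_A$. The paper carries this out with a slightly different finishing move: it first builds a map $f:\bigoplus_S\fakedelta_S\to I_G(\fakedelta_L)$ via Lemma~\ref{lemma_standardmodules}(4), exactly as in your second inclusion, but then, rather than comparing kernels directly, it restricts along $R_G$ and constructs an $A$-linear candidate inverse $f'$ using the $A$-level instance of Lemma~\ref{lemma_standardmodules} (applied to $R_G I_G(\fakedelta_L)\cong\bigoplus_g\fakedelta_{gL}$), and checks that $R_G(f)\circ f'$ and $f'\circ R_G(f)$ are identities. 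Your kernel-comparison argument, in particular the generator analysis for $\ker\pi_X\subseteq\ker\pi_Y$, stays entirely on the $A*G$-level and is somewhat more direct; the paper's detour through $R_G$ avoids decomposing $I_G(\varphi)$ into matrix components but pays for it with a more roundabout bijectivity check.
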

\begin{proof}
  Since $\field G\otimes P_{L}$ is projective with top $$\Top(\field G\otimes P_{L})\cong \field G\otimes \Top(P_{L})=\field G\otimes L$$ by Lemma \ref{lemma_radical} and Proposition \ref{prop_IR+RI}, there is an isomorphism
  \begin{align*}
    \varphi_L': \bigoplus_{S\in \Sim(A*G)}[\field G\otimes L:S]P_{S}\rightarrow \field G\otimes P_{L}.
  \end{align*}
For the sake of notation, fix a set $\mathcal{S}_L$ of simple $A*G$-modules such that for any $S\in \Sim(A*G)$ there are exactly $[\field G\otimes L:S]$ modules in $\mathcal{S}_L$ which are isomorphic to $S$ and consider instead the isomorphism
\begin{align*}
    \varphi_L: \bigoplus_{S\in \mathcal{S}_L}P_{S}\rightarrow \field G\otimes P_{L}.
  \end{align*}
  For every $S\in \mathcal{S}_L$ consider the map
  \begin{align*}
    g_S:=(\field G\otimes \fakepi_L)\circ \varphi_L\circ \iota_{P_{S}}:P_{S}\rightarrow \field G\otimes \fakedelta_{L}
  \end{align*}
  where $\iota_{P_{S}}:P_{S}\rightarrow \bigoplus_{S'\in \mathcal{S}_L}P_{S'}$ is the canonical embedding.\\
  Then since $\fakedelta_{L}$ has a filtration by $L'\ngtr_A L$ and $\field G\otimes -$ is exact, $\field G\otimes \fakedelta_{L}$ has a filtration by $\field G\otimes L'$ such that $L'\ngtr_A L$. Since for every $L'\ngtr_A L$ any two simple summands $S|\field G\otimes L$ and $S'|\field G\otimes L'$ fulfill $S'\ngtr_A S$, this means that no composition factor $S'$ of $\field G\otimes \fakedelta_{L}$ is greater than any summand $S$ of $\field G\otimes L$.\\
  Hence Lemma \ref{lemma_standardmodules} implies that for any $S\in \mathcal{S}_L$ there is a homomorphism $f_S:\fakedelta_{S}\rightarrow \field G\otimes \fakedelta_{L}$ such that $g_S=f_S\circ \fakepi_S$.
  Thus we obtain a homomorphism
  \begin{align*}
    f:\bigoplus_{S\in \mathcal{S}_L}\fakedelta_{S}\rightarrow \field G\otimes \fakedelta_{L}, (x_S)_S\mapsto \sum_S f_S(x_S)
  \end{align*}
  such that 
  \begin{align*}
    f\circ (\fakepi_S)_S((x_S)_S)&=\sum_S f_S\circ \fakepi_S(x_S)
    =\sum_S g_S(x_S)\\ 
    &=\sum_S(\field G\otimes \fakepi_L)\circ \varphi_L\circ \iota_{P_S}(x_S)
    =(\field G\otimes \fakepi_L)\circ \varphi_L((x_S)_S).
  \end{align*}
  In other words, the diagram 
\[\begin{tikzcd}[ampersand replacement=\&]
	{\bigoplus_S P(S)} \& {\bigoplus_S \fakedelta(S)} \\
	{\field G\otimes P(L)} \& {\field G\otimes \fakedelta(L)}
	\arrow["f", from=1-2, to=2-2]
	\arrow["{(\fakepi_S)_S}", from=1-1, to=1-2]
	\arrow["{\field G\otimes \fakepi_L}"', from=2-1, to=2-2]
	\arrow["{\varphi_L}"', from=1-1, to=2-1]
\end{tikzcd}\]
  commutes.
  On the other hand, note that  by Proposition \ref{prop_IR+RI} and Lemma \ref{lemma_gdelta=deltag} we have commutative diagram 
  \begin{center}
\[\begin{tikzcd}[ampersand replacement=\&]
	{R_G(\field G\otimes P_L)} \& {\bigoplus_{g\in G}gP_{L}} \& {\bigoplus_{g\in G}P_{gL}} \\
	{R_G(\field G\otimes \fakedelta_{L})} \& {\bigoplus_{g\in G}g\fakedelta_{L}} \& {\bigoplus_{g\in G}\fakedelta_{gL}}
	\arrow["{R_G(\field G\otimes \fakepi_L)}", from=1-1, to=2-1]
	\arrow["{\bigoplus_{g\in G}g(\fakepi_{L})}", from=1-2, to=2-2]
	\arrow["{\bigoplus_{g\in G}\fakepi_{gL}}", from=1-3, to=2-3]
	\arrow[from=2-2, to=2-3]
	\arrow[from=2-1, to=2-2]
	\arrow[from=1-1, to=1-2]
	\arrow[from=1-2, to=1-3]
\end{tikzcd}\]
  \end{center}
  where the horizontal arrows are isomorphisms. Since $R_G(\fakedelta_S)$ has a filtration by $R_G(S')$ where $S'\in \Sim(A*G)$ such that $S'\ngtr_{A*G} S$ and any simple summand $L'$ of $R_G(S')$ fulfills $S'|\field G\otimes L'$, we have for any composition factor $L'$ of $R_G(\fakedelta_S)$ that $\field G\otimes  L'\ngtr S$. Thus for $S|\field G\otimes L$, $R_G(\fakedelta_S)$ has no composition factor $L'$ such that $\field G \otimes L'$ is greater than $\field G\otimes L$ and thus no composition factor $L'$ which is greater than $L$.\\ 
  Hence, analogously to the construction of $f$, we can construct an $A$-linear map $f':R_G(\field G\otimes \fakedelta_{L})\rightarrow R_G(\bigoplus_S \fakedelta_S)$ such that
  the diagram 
\[\begin{tikzcd}[ampersand replacement=\&]
	{R_G(\field G\otimes P(L))} \&\& {R_G(\field G\otimes \fakedelta(L))} \\
	{R_G(\bigoplus_SP(S))} \&\& {R_G(\bigoplus_S\fakedelta(S))}
	\arrow["{R_G(\field G\otimes \fakepi_L)}", from=1-1, to=1-3]
	\arrow["{R_G((\fakepi_S)_S}", from=2-1, to=2-3]
	\arrow["{R_G(\varphi_L)^{-1}}", from=1-1, to=2-1]
	\arrow["{f'}"', from=1-3, to=2-3]
\end{tikzcd}\]
  commutes.\\
  We obtain a diagram
\[\begin{tikzcd}[ampersand replacement=\&]
	{R_G(\field G\otimes P(L))} \&\& {R_G(\field G\otimes \fakedelta(L))} \\
	{R_G(\bigoplus_SP(S))} \&\& {R_G(\bigoplus_S\fakedelta(S))}
	\arrow["{R_G(\field G\otimes \fakepi_L)}", from=1-1, to=1-3]
	\arrow["{R_G((\fakepi_S)_S)}", from=2-1, to=2-3]
	\arrow["{R_G(\varphi_L)^{-1}}", shift left=1, from=1-1, to=2-1]
	\arrow["{f'}", shift left=1, from=1-3, to=2-3]
	\arrow["{R_G(f)}", shift left=1, from=2-3, to=1-3]
	\arrow["{R_G(\varphi_L)}", shift left=1, from=2-1, to=1-1]
\end{tikzcd}\]
  where both squares commute, i.e. 
  \begin{align*}
    f'\circ R_G(\field G\otimes \fakepi_L)&=R_G((\fakepi_S)_S)\circ R_G(\varphi_L)^{-1}\\ 
   \textup{and } R_G(f)\circ R_G((\fakepi_S)_S)&= R_G(\field G\otimes \fakepi_L)\circ R_G(\varphi_L).
  \end{align*}
  In particular,
  \begin{align*}
    R_G(f)\circ f'\circ R_G(\field G\otimes \fakepi_L)&=R_G(f)\circ R_G((\fakepi_S)_S)\circ R_G(\varphi_L)^{-1}\\
    &= R_G(\field G\otimes \fakepi_L)\circ R_G(\varphi_L)\circ R_G(\varphi_L)^{-1}\\
    &=R_G(\field G\otimes \fakepi_L)
  \end{align*} and
  \begin{align*}
    f'\circ R_G(f)\circ R_G((\fakepi_S)_S)&=f'\circ R_G(\field G\otimes \fakepi_L)\circ R_G(\varphi_L)\\
    &=R_G((\fakepi_S)_S)\circ R_G(\varphi_L)^{-1}\circ  R_G(\varphi_L)\\
    &=R_G((\fakepi_S)_S).
  \end{align*}
  Since $R_G(\field G\otimes \fakepi_L)$ and $R_G((\fakepi_S)_S)$ are epimorphisms, this implies that $f'=R_G(f)^{-1}$. In particular, $f$ is bijective and hence an isomorphism.\\
  Thus 
  \begin{align*}
    \field G\otimes \fakedelta_L\cong \bigoplus_{S\in \mathcal{S}_L}\fakedelta_S\cong \bigoplus_{S\in \Sim(A*G)}[\field G\otimes L:S]\fakedelta_S.
  \end{align*}\qedhere
\end{proof}
\begin{corollary}\label{corollary_Deltadivides}
  $\fakedelta^{A*G}|\field G\otimes \fakedelta^A$  and  $\fakedelta^A|R_G(\fakedelta^{A*G})$.
\end{corollary}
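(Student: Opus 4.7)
The plan is to deduce both divisibility relations directly from Proposition~\ref{prop_IDelta}, combined with the ``every simple is covered'' statement of Corollary~\ref{corollary_dividessimples}. No new construction is needed; the whole argument reduces to a multiplicity count using Krull--Remak--Schmidt.

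First I would prove $\fakedelta^{A*G}\mid \field G\otimes \fakedelta^A$. Summing the isomorphism from Proposition~\ref{prop_IDelta} over $L\in \Sim(A)$ yields
\[
\field G\otimes \fakedelta^A\cong \bigoplus_{S\in\Sim(A*G)}\Bigl(\sum_{L\in\Sim(A)}[\field G\otimes L:S]\Bigr)\fakedelta_S.
\]
Since $\fakedelta^{A*G}=\bigoplus_{S}\fakedelta_S$ with each summand of multiplicity one, the claim reduces to checking that the coefficient of $\fakedelta_S$ on the right is at least $1$ for every $S\in\Sim(A*G)$. This is exactly the first half of Corollary~\ref{corollary_dividessimples}: each simple $A*G$-module is a summand of $\field G\otimes L$ for at least one simple $A$-module $L$.

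Next I would prove $\fakedelta^A\mid R_G(\fakedelta^{A*G})$. Applying $R_G$ to Proposition~\ref{prop_IDelta}, and rewriting its left-hand side via Proposition~\ref{prop_IR+RI}(2) and Lemma~\ref{lemma_gdelta=deltag}, gives for each $L\in\Sim(A)$
\[
\bigoplus_{g\in G}\fakedelta_{gL}\cong R_G(\field G\otimes \fakedelta_L)\cong \bigoplus_{S\in\Sim(A*G)}[\field G\otimes L:S]\,R_G(\fakedelta_S).
\]
The left-hand side contains $\fakedelta_L$ as a summand (choose $g=e$), and $\fakedelta_L$ is indecomposable because it has simple top $L$. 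Hence Krull--Remak--Schmidt applied to the right-hand side forces $\fakedelta_L$ to be a direct summand of $R_G(\fakedelta_S)$ for some $S$ with $[\field G\otimes L:S]>0$. Thus $\fakedelta_L\mid R_G(\fakedelta^{A*G})$ for every $L\in\Sim(A)$, and assembling these gives $\fakedelta^A\mid R_G(\fakedelta^{A*G})$.

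The only substantive observation is the indecomposability of each $\fakedelta_L$, used to invoke Krull--Remak--Schmidt in the second step; everything else is direct bookkeeping with the multiplicities $[\field G\otimes L:S]$, so I do not anticipate any genuine obstacle.
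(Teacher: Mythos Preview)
Your argument is correct. The first half is identical to the paper's. For the second half the paper takes a slightly different route: it reuses the first claim, observing that $R_G(\fakedelta^{A*G})$ is a summand of $R_G(\field G\otimes\fakedelta^A)\cong\bigoplus_{g\in G}\fakedelta^A$, hence is itself a direct sum of pseudostandard modules, and then identifies which $\fakedelta_L$ occur by computing $\Top(R_G(\fakedelta^{A*G}))\cong R_G L^{A*G}$ and invoking the second half of Corollary~\ref{corollary_dividessimples}. Your version instead applies $R_G$ to the isomorphism of Proposition~\ref{prop_IDelta} for each $L$ separately and appeals to Krull--Remak--Schmidt using the indecomposability of $\fakedelta_L$. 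Both arguments use the same ingredients; the paper's is marginally shorter because it recycles the first part, while yours has the small advantage of making the two halves logically independent.
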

\begin{proof}
  The first claim follows directly from the previous proposition,  and Corollary \ref{corollary_dividessimples}.
  The second claim then follows from the fact that $R_G(\fakedelta^{A*G})$ is a direct summand of $$R_G(\field G\otimes \fakedelta^A)\cong \bigoplus_{g\in G}g\fakedelta^A\cong  \bigoplus_{g\in G}\fakedelta^A,$$ so that $R_G(\fakedelta^{A*G})$ is isomorphic to a direct sum of standard modules, combined with the fact that $$\Top(R_G(\fakedelta^{A*G}))\cong R_G(\Top(\fakedelta^{A*G}))\cong R_GL^{A*G}$$ and $L^A|R_GL^{A*G}$ by Corollary \ref{corollary_dividessimples}.
\end{proof}
\begin{example}
  Let $Q$ be the quiver 
  \[\begin{tikzcd}[ampersand replacement=\&]
    1 \& 2
    \arrow["\beta", curve={height=-6pt}, from=1-2, to=1-1]
    \arrow["\alpha", curve={height=-6pt}, from=1-1, to=1-2]
  \end{tikzcd}\]
  and $A:=\field Q/J^2$, where $J$ denotes the arrow ideal. Consider the partial order $\leq$ on $\Sim(A)=\{L_1, L_2\}$ given by an antichain, i.e. all distinct elements are incomparable. Then the standard modules of $A$ are simple. Moreover, the group $G=\{e, g\}\cong \mathbb{Z}/2\mathbb{Z}$ acts on $A$ via the $\field$-linear map defined via $g(e_1)=e_2$, $g(e_2)=e_1$, $g(\alpha)=\beta$ and $g(\beta)=\alpha$,
  and the $G$ action preserves the partial order. Now by \cite[2.3]{ReitenRiedtmann}, $A*G$ is Morita equivalent to $\field [x]/(x^2)$, hence $\Delta_{\field}=\field [x]/(x^2)$ is not simple.
   In particular, neither Proposition \ref{prop_IDelta} nor Corollary \ref{corollary_Deltadivides} hold for $(\Delta_L)_L$ instead of $(\fakedelta_L)_L$.
\end{example}
Using Corollary \ref{corollary_Deltadivides} and Proposition \ref{prop_simplesofA*G}, we also obtain a concrete description of $(\fakedelta_S^{A*G})_S$. We denote, as before,  by $H_L$ the stabilizer of the isomorphism class of $L$ and choose an $H_L$-equivariant representative $L$ of this class. Moreover, we let $P_L^{A*H_L}$ be a projective cover of $L$ as an $A*H_L$-module and $\fakedelta_L^{A*H_L}$ be the corresponding pseudostandard module. Then by Corollary \ref{corollary_Deltadivides}, the restriction ${}_{A|}\fakedelta_L^{A*H_L}$ is isomorphic to a direct sum of standard modules with top $$\topp({}_{A|}\fakedelta_L^{A*H_L})\cong {}_{A|}\topp(\fakedelta_L^{A*H_L})\cong {}_{A|}L,$$ so that ${}_{A|}\fakedelta_L^{A*H_L}\cong \fakedelta_L^A$.
Thus we obtain the following corollary:
\begin{corollary}\label{corollary_deltaGmodule}
  $\fakedelta_L^A$ has the structure of an $A*H_L$-module.
\end{corollary}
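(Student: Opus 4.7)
The plan is to construct the $A*H_L$-module structure on $\fakedelta_L^A$ by realising it as the restriction to $A$ of a suitable pseudostandard module for the algebra $A*H_L$, and then invoking Corollary~\ref{corollary_Deltadivides} with $G$ replaced by the subgroup $H_L$.

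First, since the isomorphism class of $L$ is by definition fixed by every element of $H_L$, the lemma just before Proposition~\ref{prop_simplesofA*G} lets me choose an $H_L$-equivariant representative $L$, so that $L$ is genuinely an $A*H_L$-module. Since $\leq_A$ is assumed $G$-equivariant, its restriction is in particular $H_L$-equivariant, so Proposition~\ref{prop_bijectionorders} supplies an $H_L$-stable partial order $\leq_{A*H_L}$ on $\Sim(A*H_L)$. I can then form the projective cover $P_L^{A*H_L}$ of $L$ in $\modu A*H_L$ and the corresponding pseudostandard module $\fakedelta_L^{A*H_L}$ in the sense of Definition~\ref{def_standardmodule}.

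Next, I apply Corollary~\ref{corollary_Deltadivides} with $G$ replaced by $H_L$, which is legitimate because the hypotheses of Section~\ref{s3} transfer verbatim to the subgroup. The corollary says that $R_{H_L}(\fakedelta^{A*H_L})$ is a direct summand of
\begin{align*}
  R_{H_L}\bigl(\field H_L \otimes \fakedelta^A\bigr) \;\cong\; \bigoplus_{h\in H_L} h\,\fakedelta^A \;\cong\; \bigoplus_{h\in H_L} \fakedelta_{hL}^A,
\end{align*}
where the final isomorphism uses Lemma~\ref{lemma_gdelta=deltag}. Restricting to the summand $\fakedelta_L^{A*H_L}$ and using that $hL\cong L$ for every $h\in H_L$, I conclude that $R_{H_L}(\fakedelta_L^{A*H_L})$ is a direct sum of finitely many copies of $\fakedelta_L^A$.

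Finally, to pin down the multiplicity, note that restriction commutes with taking tops (both functors are right exact and preserve the radical in the appropriate sense by Lemma~\ref{lemma_radical}), so
\begin{align*}
  \Top\bigl(R_{H_L}(\fakedelta_L^{A*H_L})\bigr) \;\cong\; R_{H_L}\bigl(\Top(\fakedelta_L^{A*H_L})\bigr) \;\cong\; R_{H_L}(L) \;\cong\; L,
\end{align*}
which is simple as an $A$-module. Hence exactly one copy appears, $R_{H_L}(\fakedelta_L^{A*H_L})\cong \fakedelta_L^A$, and transporting the $A*H_L$-action along this isomorphism endows $\fakedelta_L^A$ with the required structure. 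The only real obstacle is checking that Corollary~\ref{corollary_Deltadivides} applies to $H_L$ with the induced data, and that restriction genuinely commutes with taking tops here; once these two bookkeeping points are settled, the conclusion is immediate.
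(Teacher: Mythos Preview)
Your proof is correct and follows exactly the paper's argument: form $\fakedelta_L^{A*H_L}$ for an $H_L$-equivariant representative $L$, apply Corollary~\ref{corollary_Deltadivides} with $H_L$ in place of $G$ to see that its restriction to $A$ is a direct sum of pseudostandard $A$-modules, and identify the single summand via the top. The display $\bigoplus_{h\in H_L}h\,\fakedelta^A\cong\bigoplus_{h\in H_L}\fakedelta_{hL}^A$ is a minor slip (either keep the full $\fakedelta^A$ on the right or first pass to the $L$-indexed summand, as you in effect do in the next line), but it does not affect the argument.
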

\begin{lemma}\label{lemma_deltaforbasicA*G} The pseudostandard modules $\fakedelta_S^{A*G}$ for $A*G$ are of the form $\field G\otimes_{\field H_L} (\fakedelta_L\otimes V)$ where $V$ is an irreducible representation of $H_L$. Morevoer, two such modules $ \field G\otimes_{\field H_L}(\fakedelta_L\otimes V)$ and $\field G\otimes_{\field H_{L'}}( \fakedelta_{L'}\otimes W)$ of this form are isomorphic if and only if there is $g\in G$ such that $gL\cong L'$ and $gV\cong W$.
\end{lemma}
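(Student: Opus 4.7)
The plan is to combine Proposition~\ref{prop_IDelta}, which describes $\field G\otimes \fakedelta_L^A$ as a direct sum of pseudostandards for $A*G$, with an independent computation of the same induced module that groups its summands by the irreducible representations $V$ of the stabilizer $H_L$, and then to match the two decompositions via Krull-Remak-Schmidt.

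More concretely, given an $H_L$-equivariant representative $L\in \Sim(A)$ and an irreducible $\field H_L$-module $V$, I would set $M_{L,V}:=\field G\otimes_{\field H_L}(\fakedelta_L^A\otimes V)$, where $\fakedelta_L^A$ carries the $A*H_L$-structure supplied by Corollary~\ref{corollary_deltaGmodule}. The first step is to verify that $M_{L,V}$ is indecomposable with simple top $\field G\otimes_{\field H_L}(L\otimes V)$. Using Lemma~\ref{lemma_radical} one computes $\rad(A*H_L)\cdot(\fakedelta_L^A\otimes V)=\rad(\fakedelta_L^A)\otimes V$, so the $A*H_L$-top of $\fakedelta_L^A\otimes V$ is $L\otimes V$; applying the exact induction functor $I_{G/H_L}$ and Lemma~\ref{lemma_radical} again identifies $\topp(M_{L,V})$ with $\field G\otimes_{\field H_L}(L\otimes V)$, which is simple by Proposition~\ref{prop_simplesofA*G}. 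Thus $M_{L,V}$ is indecomposable.

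Next I would decompose both sides of Proposition~\ref{prop_IDelta}. On the simple level, writing $\field H_L\cong \bigoplus_V [\field H_L:V]\,V$ as $\field H_L$-modules gives
\begin{align*}
\field G\otimes L \;\cong\; \field G\otimes_{\field H_L}(L\otimes \field H_L)\;\cong\; \bigoplus_V [\field H_L:V]\,\field G\otimes_{\field H_L}(L\otimes V),
\end{align*}
so $[\field G\otimes L:S_{L,V}]=[\field H_L:V]$ where $S_{L,V}:=\field G\otimes_{\field H_L}(L\otimes V)$. The same manoeuvre applied to $\fakedelta_L^A$ via Corollary~\ref{corollary_G/H}.1 and Proposition~\ref{prop_IR+RI}.1 yields
\begin{align*}
\field G\otimes \fakedelta_L^A \;\cong\; I_{G/H_L}\bigl(I_{H_L}(\fakedelta_L^A)\bigr)\;\cong\; I_{G/H_L}(\fakedelta_L^A\otimes \field H_L)\;\cong\; \bigoplus_V [\field H_L:V]\,M_{L,V}.
\end{align*}
Comparing with Proposition~\ref{prop_IDelta}, which gives $\field G\otimes \fakedelta_L^A\cong\bigoplus_V [\field H_L:V]\,\fakedelta_{S_{L,V}}^{A*G}$, and using that both $M_{L,V}$ and $\fakedelta_{S_{L,V}}^{A*G}$ are indecomposable with simple top $S_{L,V}$, Krull-Remak-Schmidt delivers $M_{L,V}\cong \fakedelta_{S_{L,V}}^{A*G}$. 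Since every $S\in \Sim(A*G)$ is of the form $S_{L,V}$ by Proposition~\ref{prop_simplesofA*G}, this exhausts the pseudostandard modules.

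For the isomorphism criterion, note that $M_{L,V}\cong M_{L',W}$ forces $\topp(M_{L,V})\cong \topp(M_{L',W})$, i.e.\ $S_{L,V}\cong S_{L',W}$, which by Proposition~\ref{prop_simplesofA*G} is equivalent to the existence of $g\in G$ with $gL\cong L'$ and $gV\cong W$; conversely such a $g$ induces an isomorphism $M_{L,V}\to M_{L',W}$ on the nose via $h\otimes x\otimes v\mapsto hg\otimes x\otimes v$, using $g\fakedelta_L^A\cong \fakedelta_{gL}^A$ from Lemma~\ref{lemma_gdelta=deltag}. The delicate point is ensuring the indecomposability claim for $M_{L,V}$, as the Krull-Remak-Schmidt matching rests entirely on knowing the tops of the summands on both sides; once that is in hand the rest of the proof is essentially bookkeeping with the functorial identities collected in Section~\ref{s1}.
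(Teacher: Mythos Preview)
Your proposal is correct and follows essentially the same route as the paper: decompose $\field G\otimes\fakedelta_L^A$ in two ways (via Proposition~\ref{prop_IDelta} and via $\field G\otimes_{\field H_L}(\fakedelta_L\otimes\field H_L)$), observe that each $\field G\otimes_{\field H_L}(\fakedelta_L\otimes V)$ has simple top $\field G\otimes_{\field H_L}(L\otimes V)$ and is therefore indecomposable, and match summands; the isomorphism criterion is then reduced to Proposition~\ref{prop_simplesofA*G} exactly as you do. Your write-up is slightly more explicit about the top computation and the Krull--Remak--Schmidt step, but the argument is the same.
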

\begin{proof}
  By Proposition \ref{prop_IDelta} and Proposition \ref{prop_IR+RI}
  \begin{align*}
    \bigoplus_{S\in \Sim(A*G)}[\field G\otimes L:S]\fakedelta_{S}&\cong \field G\otimes \fakedelta_{L}
    \cong \field G\otimes_{\field H_i} \field H_i\otimes \fakedelta_{L}\\ 
    &\cong \field G\otimes_{\field H_i} (\fakedelta_{L}\otimes \field H_i)
    \cong \bigoplus_{V\in \Sim(\field H_i)}[\field H_i:V]\field G\otimes_{\field H_i}(\fakedelta_L\otimes V).
  \end{align*}
  Moreover, $\field G\otimes_{\field H_L} (\fakedelta_L\otimes V)$ has simple top $\field G\otimes_{\field H_L} (L\otimes V)$ and is in particular indecomposable.\\ 
  Thus it is isomorphic to $\fakedelta_{\field G\otimes_{\field H_L} (L\otimes V)}$.\\
  Moreover, $\field G\otimes_{\field H_L}(\fakedelta_L\otimes V)\cong \fakedelta_{\field G\otimes_{\field H_L} (L\otimes V)}$ and $\field G\otimes_{\field H_{L'}}( \fakedelta_{L'}\otimes W)\cong \fakedelta_{\field G\otimes_{\field H_{L'}} (L'\otimes W)}$ are isomorphic if and only if $\field G\otimes_{\field H_L} (L\otimes V)$ and $\field G\otimes_{\field H_{L'}} (L'\otimes W)$ are isomorphic, which, by Proposition \ref{prop_simplesofA*G}, is the case if and only if there is a $g\in G$ such that $gL\cong L'$ and $gV\cong W$.
\end{proof}
Now we use the description of the pseudostandard modules to compare properties of $(A, \leq_A)$ to $(A*G, \leq_{A*G})$.
\begin{lemma}\label{lemma_adaptedandexceptional}
  The following statements are equivalent:
  \begin{enumerate}
    \item $\leq_A$ is adapted and $(\Delta^A_L)$ is exceptional.
    \item $\leq_{A*G}$ is adapted and $(\Delta_S^{A*G})$ is exceptional.
  \end{enumerate}
\end{lemma}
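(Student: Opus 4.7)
The plan is to reduce both sides of the equivalence to condition (2) of Lemma \ref{lemma_adaptedandexceptionalrewrite}, namely that every composition factor of $\rad(\fakedelta_L)$ is strictly smaller than $L$. So it suffices to show:
\begin{align*}
\textup{every c.f. }L'\textup{ of }\rad\fakedelta^A_L\textup{ satisfies } L'<_AL
\Longleftrightarrow
\textup{every c.f. }S'\textup{ of }\rad\fakedelta^{A*G}_S\textup{ satisfies }S'<_{A*G}S.
\end{align*}

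Two preparatory observations drive the rest. First, the restriction functor $R_G$ commutes with taking the radical of a module: for any $A*G$-module $M$ one has $R_G\rad(M)=\rad(A*G)M=\rad(A)M=\rad(R_GM)$ by Lemma \ref{lemma_radical}. Secondly, by Corollary \ref{corollary_Deltadivides} and Lemma \ref{lemma_gdelta=deltag}, $R_G\fakedelta^{A*G}_S$ is a direct sum of pseudostandard modules $\fakedelta^A_{L_i}$; and by Corollary \ref{corollary_dividessimples} together with Proposition \ref{prop_simplesofA*G}, the simple $A$-summands appearing in $R_GS$ form a single $G$-orbit (and similarly for $R_GS'$).

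For the forward implication, let $S'$ be a composition factor of $\rad\fakedelta^{A*G}_S$ and pick any simple summand $L'\di R_GS'$. Then $L'$ is a composition factor of $\rad R_G\fakedelta^{A*G}_S$, which decomposes as a sum of $\rad\fakedelta^A_{L_i}$ with the $L_i$ being the summands of $R_GS$. The hypothesis yields $L'<_AL_i$ for some such $L_i$. Using $G$-equivariance of $\leq_A$ and the fact that the simple summands of $R_GS$ and of $R_GS'$ each constitute a single $G$-orbit, this promotes to $\widetilde{L'}<_A\widetilde{L}$ for every summand $\widetilde{L'}\di R_GS'$ and every summand $\widetilde{L}\di R_GS$, which is precisely the definition of $S'<_{A*G}S$.

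For the reverse implication, given $L\in\Sim(A)$, replace $L$ by an $H_L$-equivariant representative of its isomorphism class, pick any irreducible $\field H_L$-module $V$, and set $S:=\field G\otimes_{\field H_L}(L\otimes V)$. By Lemma \ref{lemma_deltaforbasicA*G}, $R_G\fakedelta^{A*G}_S\cong\bigoplus_{z\in Z_L}(\dim V)\cdot\fakedelta^A_{zL}$, so $\fakedelta^A_L$ is a direct summand of $R_G\fakedelta^{A*G}_S$. Any composition factor $L'$ of $\rad\fakedelta^A_L$ therefore appears in $\rad R_G\fakedelta^{A*G}_S=R_G\rad\fakedelta^{A*G}_S$, so $L'\di R_GS'$ for some composition factor $S'$ of $\rad\fakedelta^{A*G}_S$. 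Applying the hypothesis gives $S'<_{A*G}S$, and unfolding the definition of the induced order (since $L\di R_GS$ and $L'\di R_GS'$) yields $L'<_AL$.

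The main subtlety, and the point where each hypothesis in Section \ref{s3} earns its keep, is the upgrade from an existential statement produced by restriction (``$L'<_AL_i$ for some $L_i$'') to the universal statement required by the induced order (``for all summands of $R_GS$ and $R_GS'$''). This is exactly what $G$-equivariance of $\leq_A$ and the orbit structure of $R_GS$ supply; the rest is bookkeeping.
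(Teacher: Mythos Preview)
Your argument is correct and follows essentially the same strategy as the paper: reduce both statements to the characterization in Lemma~\ref{lemma_adaptedandexceptionalrewrite}, then transfer between $A$ and $A*G$ using that $\fakedelta^{A*G}_S$ and $\fakedelta^A_L$ are related via $I_G$ and $R_G$ (Corollary~\ref{corollary_Deltadivides}), with $G$-equivariance of $\leq_A$ upgrading the existential conclusion to the universal one. The only cosmetic difference is that the paper works with condition~(3) of Lemma~\ref{lemma_adaptedandexceptionalrewrite} (the $\Hom$ criterion) and embeds $\Hom_{A*G}(\fakedelta_S,\rad\fakedelta_{S'})$ into $\bigoplus_{g,g'}\Hom_A(\fakedelta_{gL},\rad\fakedelta_{g'L'})$, whereas you use condition~(2) (the composition-factor criterion) and track composition factors through $R_G$; this lets the paper avoid the explicit description of simples from Proposition~\ref{prop_simplesofA*G} and Lemma~\ref{lemma_deltaforbasicA*G} in the reverse direction (it would suffice for you, too, to pick any $S$ with $L\mid R_GS$ and invoke Corollary~\ref{corollary_Deltadivides} directly).
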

\begin{proof}
     By Lemma \ref{lemma_adaptedandexceptionalrewrite} and Proposition \ref{prop_bijectionorders}, $\leq_A$ is adapted and $(\Delta_L^A)_L$ is exceptional if and only if 
      \begin{align*}
        \Hom_A(\fakedelta_L, \rad(\fakedelta_{L'}))\neq (0)\Rightarrow L<_AL'
      \end{align*}
      and 
      $\leq_{A*G}$ is adapted and $(\Delta_S^{A*G})_S$ is exceptional if and only if 
      \begin{align*}
        &\Hom_{A*G}(\fakedelta_S, \rad(\fakedelta_{S'}))\neq (0)\\ 
        \Rightarrow &L<_AL'\textup{ for some (equivalently all) }L, L' \textup{ such that }S|\field G\otimes L\textup{ and } S'|\field G\otimes L'
      \end{align*}
      Now if $\leq_A$ is adapted and $(\Delta_L^A)_L$ exceptional, then for all $S, S'\in \Sim(A*G)$ and $L, L'\in \Sim(A)$ such that $S|\field G\otimes L, S'|\field G\otimes L'$ we have
      \begin{align*}
        \Hom_{A*G}(\fakedelta_S, \rad(\fakedelta_S'))
        \subseteq  &\Hom_{A*G}(\field G\otimes \fakedelta_L, \rad(\field G\otimes \fakedelta_{L'}))\\
        \subseteq & \Hom_{A}(\field G\otimes \fakedelta_L, \rad(\field G\otimes\fakedelta_{L'}))
        \cong \bigoplus_{g, g'\in G}\Hom_A(\fakedelta_{g(L)}, \rad(\fakedelta_{g'(L')})).
      \end{align*}
      Thus if $\Hom_{A*G}(\fakedelta_S, \rad(\fakedelta_{S'}))\neq (0)$, there exist $g, g'\in G$ such that 
      \begin{align*}
        \Hom_A(\fakedelta_{g(L)}, \rad(\fakedelta_{g'(L')}))\neq (0).
      \end{align*}
       By assumption, this implies $gL<_A g'L$, and hence, since $\leq_A$ is $G$-equivariant, $L<_A L'$. Thus $\field G\otimes L<_{A*G} \field G\otimes L'$, so that $S<_{A*G}S'$.\\
      On the other hand, if $\leq_{A*G}$ is adapted and $(\Delta_S^{A*G})$ exceptional, then
      \begin{align*}
        &\Hom_A(\fakedelta_L, \rad(\fakedelta_{L'}))\neq (0)\\
        \Rightarrow &\Hom_{A*G}(\field G\otimes \fakedelta_L, \rad(\field G\otimes \fakedelta_{L'}) )\neq (0)\\
        \Rightarrow &\bigoplus_{S|\field G\otimes L, S'|\field G\otimes L'}\Hom_{A*G}(\fakedelta_S, \rad(\fakedelta_{S'})) \neq (0)\\
        \Rightarrow &\exists S|\field G\otimes L, S'|\field G\otimes L': \Hom_{A*G}(\fakedelta_S, \rad(\fakedelta_{S'}))\neq (0)\\
        \Rightarrow &\exists S|\field G\otimes L, S'|\field G\otimes L':S<_{A*G}S'\\
        \Leftrightarrow &L<_AL'.
      \end{align*}
  \end{proof}
\begin{example}\label{example_notadapted}
   It is not true that $\leq_{A*G}$ is adapted if and only if $\leq_A$ is adapted. Consider for example $A:=k[x]/(x^2)$ and $G=\{1, g\}\cong \mathbb{Z}/2\mathbb{Z}$ with $g(x)=-x$. Then $A$ has a unique simple $\field$ and the unique order is clearly $G$-equivariant and adapted. However, $A*G$ is given by the quiver 
   \begin{center}
\begin{tikzcd}
  L_1\cong \field(1/\sqrt{2}(1+g)) \arrow[rr, "\alpha=r_{-gx}", bend left] &  & L_2\cong \field(1/\sqrt{2}(1-g)) \arrow[ll, "\beta=r_{gx}", bend left]
  \end{tikzcd}
   \end{center}
   with $\alpha\beta=0=\beta\alpha$ and $\leq_{A*G}$ being an antichain. Hence $\Delta_1'=P_1$ has socle $L_2$ and top $L_1$ and $L_2$ is incomparable to $L_1$.
\end{example}
\begin{theorem}\label{thm_ghiff} 
  The following statements hold:
  \begin{enumerate}
    \item $(A, \leq_A)$ is \puhh if and only if $(A*G, \leq_{A*G})$ is \puhh.
    \item $(A, \leq_A)$ is strongly \puhh if and only if $(A*G, \leq_{A*G})$ is strongly quasi-hereditary.
    \item $(A, \leq_A)$ is directed if and only if $(A*G, \leq_{A*G})$ is directed.
  \end{enumerate}
\end{theorem}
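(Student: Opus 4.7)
The plan is to exploit the exact functors $I_G$ and $R_G$ from Definition \ref{def_I+R}, together with the two explicit identities $I_G(\fakedelta^A_L) \cong \bigoplus_{S} [\field G \otimes L : S]\,\fakedelta^{A*G}_S$ from Proposition \ref{prop_IDelta} and the divisibility $\fakedelta^A \mid R_G(\fakedelta^{A*G})$ from Corollary \ref{corollary_Deltadivides}. Both functors are exact and preserve projectives (Proposition \ref{prop_IR+RI}), and satisfy $I_G(A) \cong A*G$ and $R_G(A*G) \cong |G| \cdot A$ as modules. Each of the three statements will then be proved as a transfer of structure along these functors. The adaptedness-plus-exceptionality condition is already handled in one shot by Lemma \ref{lemma_adaptedandexceptional}, so the remaining content concerns the filtration condition $A \in \mathcal{F}(\Delta)$, the projective dimension bound, and the simplicity $\Delta_L \cong L$, respectively.

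For (1), once Lemma \ref{lemma_adaptedandexceptional} is invoked, Lemma \ref{lemma_adapted} identifies $\Delta = \fakedelta$ on both sides, so Proposition \ref{prop_IDelta} may be applied to standard modules. The forward implication applies $I_G$ to a $\Delta^A$-filtration of $A$, producing a filtration of $A*G = I_G(A)$ whose quotients $I_G(\Delta^A_L)$ are direct sums of $\Delta^{A*G}_S$; closure of $\mathcal{F}(\Delta^{A*G})$ under direct sums and extensions (Proposition \ref{prop_addclosed}) refines this into a genuine standard filtration. The converse applies $R_G$ to a $\Delta^{A*G}$-filtration of $A*G$; by Corollary \ref{corollary_Deltadivides} each quotient $R_G(\Delta^{A*G}_S)$ is a direct sum of modules $\Delta^A_L$, giving $|G| \cdot A = R_G(A*G) \in \mathcal{F}(\Delta^A)$, and $A \in \mathcal{F}(\Delta^A)$ follows by closure under summands (Proposition \ref{prop_addclosed}).

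For (2), given (1), it suffices to transfer the bound that each $\Delta_L$ has projective dimension at most one. Since $I_G$ is exact and preserves projectives, a length-one projective resolution of $\Delta^A_L$ induces one for $\field G \otimes \Delta^A_L \cong \bigoplus_S [\field G \otimes L : S]\,\Delta^{A*G}_S$, and every summand inherits the bound; Corollary \ref{corollary_dividessimples} ensures every simple $A*G$-module is covered this way. The converse is symmetric via $R_G$ and Corollary \ref{corollary_Deltadivides}. For (3), directedness already includes adaptedness, so $\fakedelta^A_L = \Delta^A_L \cong L$, and Proposition \ref{prop_IDelta} yields $\field G \otimes L \cong \bigoplus_S [\field G \otimes L : S]\,\fakedelta^{A*G}_S$. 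Since the left-hand side is semisimple by Proposition \ref{prop_IR+RI}(7), each $\fakedelta^{A*G}_S$ in play is semisimple with simple top $S$, hence equal to $S$; then $\Delta^{A*G}_S$, a factor of $\fakedelta^{A*G}_S = S$ with simple top $S$, equals $S$ as well, and the coincidence $\Delta^{A*G} = \fakedelta^{A*G}$ recovers adaptedness of $\leq_{A*G}$ via Lemma \ref{lemma_adapted}. The converse is the mirror argument with Corollary \ref{corollary_Deltadivides}: each $\fakedelta^A_L$ becomes a summand of the semisimple module $R_G(\fakedelta^{A*G})$, which forces $\fakedelta^A_L = \Delta^A_L = L$.

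Most of the substantive work is already encapsulated in Proposition \ref{prop_IDelta} and Corollary \ref{corollary_Deltadivides}; the remaining subtlety is bookkeeping, namely the fact that applying $I_G$ or $R_G$ to a $\Delta$-filtration yields quotients that are only direct sums of standards rather than individual standards, so refinement into a genuine standard filtration rests on Proposition \ref{prop_addclosed}. A related delicate point in (3) is that we must additionally derive adaptedness of the induced order, not merely simplicity of the standards; the criterion $\Delta = \fakedelta$ from Lemma \ref{lemma_adapted} provides the clean route.
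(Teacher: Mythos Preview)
Your proposal is correct and follows essentially the same route as the paper: Lemma~\ref{lemma_adaptedandexceptional} handles adaptedness plus exceptionality, Proposition~\ref{prop_IDelta} and Corollary~\ref{corollary_Deltadivides} transfer the filtration, projective-dimension, and semisimplicity conditions along $I_G$ and $R_G$, and Proposition~\ref{prop_addclosed} takes care of the summand/refinement bookkeeping. The one place you deviate is part~(3): the paper first invokes part~(1) to get quasi-heredity on both sides and then checks semisimplicity of $\Delta$ via the same divisibility, whereas you argue directly that $\fakedelta^{A*G}_S$ (resp.\ $\fakedelta^A_L$) is forced to be simple and then extract adaptedness from $\Delta=\fakedelta$ via Lemma~\ref{lemma_adapted}; both are valid, and your version has the mild advantage of not routing through the full quasi-hereditary equivalence. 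One small point of precision: in the backward direction of~(1) you cite Corollary~\ref{corollary_Deltadivides} for ``each $R_G(\Delta^{A*G}_S)$ is a direct sum of $\Delta^A_L$'s,'' but the corollary as stated only gives $\fakedelta^A \mid R_G(\fakedelta^{A*G})$; the paper spells out the missing step, namely that $R_G(\fakedelta^{A*G}_S)$ is a summand of $R_G(\field G\otimes\fakedelta^A_L)\cong\bigoplus_{g}\fakedelta^A_{gL}$ and hence, by Krull--Schmidt, itself a direct sum of standards.
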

\begin{proof}
  \begin{enumerate}
    \item Suppose $(A, \leq_A)$ is \puhh. Then $\leq_A$ is adapted and $(\Delta_L^A)$ is exceptional. So by Lemma \ref{lemma_adaptedandexceptional} $\leq_{A*G}$ is adapted and $(\Delta_S^{A*G})$ is exceptional. Moreover, $A$ has a filtration by the $\Delta_L=\fakedelta_L$. Hence $A*G\cong \field G\otimes A$ has a filtration by $\field G\otimes \fakedelta_L$. Since these decompose into a direct sum of $\fakedelta_S$, $S|\field G\otimes L$, by Proposition \ref{prop_IDelta} and Lemma \ref{lemma_adapted}, this implies that $A*G$ has a filtration by standard modules $\Delta_S=\fakedelta_S$. Hence $A*G\in F(\Delta^{A*G})$.\\
     On the other hand, suppose $A*G$ is \puhh. Then $\leq_{A*G}$ is adapted and $(\Delta_S^{A*G})$ is exceptional, so that $\leq_A$ is adapted and $(\Delta_L^A)$ is exceptional. Moreover, $A*G$ has a filtration as an $A*G$-module by standard modules. Hence as an $A$-module $A*G\cong \bigoplus_{g\in G}gA\cong |G|A$ has a filtration by the $R_G(\Delta_S)$, and by Proposition \ref{prop_IR+RI}, Lemma \ref{lemma_adapted} and Lemma  \ref{lemma_gdelta=deltag},
     \begin{align*}
      R_G(\Delta_S)=R_G(\fakedelta_S)|R_G(\field G\otimes \fakedelta_{L})\cong \bigoplus_{g\in G}\fakedelta_{g(L)}=\bigoplus_{g\in G}\Delta_{g(L)}
     \end{align*} for $S|\field G\otimes L'$,  so that, since the standard modules of $A$ are indecomposable, $R_G(\Delta_S)$ is a direct sum of standard modules and hence $|G|A\in \mathcal{F}(\Delta)$. Since by Proposition \ref{prop_addclosed}, $\mathcal{F}(\Delta)$ is closed under direct summands, $A\in \mathcal{F}(\Delta)$.
     \item By 1., $(A, \leq_A)$ is \puhh if and only if $(A*G, \leq_{A*G})$ is \puhh. Moreover, if the projective dimension of $\Delta^A$ is less than or equal to one, so is the projective dimension of $\Delta^{A*G}=\fakedelta^{A*G}|\field G\otimes \fakedelta^A=\field G\otimes \Delta^A$, and if the projective dimension of $\Delta^{A*G}$ is less than or equal to one, so is the projective dimension of $\Delta^A=\fakedelta^A|R_G(\fakedelta^{A*G})=R_G(\Delta^{A*G})$.
     \item By 1., $(A, \leq_A)$ is \puhh if and only if $(A*G, \leq_{A*G})$ is \puhh. Moreover, if $\Delta^A$ is semisimple, so is $\Delta^{A*G}=\fakedelta^{A*G}|\field G\otimes \fakedelta^A=\field G\otimes \Delta^A$, and if $\Delta_{A*G}$ is semisimple, so is $\Delta^A=\fakedelta^A|R_G(\fakedelta^{A*G})=R_G(\Delta^{A*G})$.
  \end{enumerate}
\end{proof}
\begin{remark}
  Example \ref{example_notadapted} shows that it is not in general true that if $(A, \leq_A)$ is standardly stratified so is $(A*G, \leq_{A*G})$.
\end{remark}
\begin{theorem}\label{thm_Borel}
  Let $(A, \leq_A)$ be quasi-hereditary and let $B$ be a subalgebra of $A$ such that $g(B)=B$ for all $g\in G$. Then there is a partial order $\leq_B$ on $\Sim(B)$ such that $(B, \leq_B)$ is an exact Borel subalgebra of $(A, \leq_A)$ if and only if there is a partial order $\leq_{B*G}$ on $\Sim(B*G)$ such that $(B*G, \leq_{B*G})$ is an exact Borel subalgebra of $(A*G, \leq_{A*G})$.
\end{theorem}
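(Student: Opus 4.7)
I sketch the forward direction; the reverse is symmetric. Suppose $(B,\leq_B)$ is an exact Borel subalgebra of $(A,\leq_A)$, with associated bijection $i\colon\Sim(B)\to\Sim(A)$. The first step is to verify that $i$ is $G$-equivariant: the natural map $a\otimes x\mapsto g^{-1}(a)\otimes x$ gives an $A$-linear isomorphism $A\otimes_B gL\cong g(A\otimes_B L)$, which combined with Lemma~\ref{lemma_gdelta=deltag} yields
\begin{align*}
\Delta^A_{i(gL)}=A\otimes_B gL\cong g\Delta^A_{i(L)}\cong\Delta^A_{g\,i(L)},
\end{align*}
forcing $i(gL)=g\,i(L)$ for every $g\in G$ and $L\in\Sim(B)$. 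In particular the stabilizers $H^B_L$ and $H^A_{i(L)}$ agree, and I write $H_L$ for the common value. Using the standard identity $A\otimes_B P^B_L=P^A_{i(L)}$ for exact Borel subalgebras, the composition factors of $\rad(P^B_L)$ are mapped by $i$ to the tops of the $\Delta$-filtration of $\rad(P^A_{i(L)})$, which lie strictly above $i(L)$ in $\leq_A$. It follows that $(B,i^{-1}(\leq_A))$ is still an exact Borel subalgebra of $(A,\leq_A)$, so we may assume $\leq_B=i^{-1}(\leq_A)$ is $G$-equivariant from the outset.

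\textbf{Construction of $\leq_{B*G}$ and of $i'$.} Set $\leq_{B*G}:=(\leq_B)_G$, the $G$-stable order produced by Corollary~\ref{corollary_bijectionorders}. Using Proposition~\ref{prop_simplesofA*G} together with the $G$-equivariance of $i$, the assignment
\begin{align*}
i'\bigl(\field G\otimes_{\field H_L}(L\otimes V)\bigr):=\field G\otimes_{\field H_L}(i(L)\otimes V)
\end{align*}
is a well-defined bijection $i'\colon\Sim(B*G)\to\Sim(A*G)$. Axiom~(1) of Definition~\ref{def_borel} follows from the right $B*G$-module isomorphism $a\otimes_B(b\otimes g)\mapsto ab\otimes g$, which identifies $A\otimes_B(B*G)$ with $A*G$ and hence makes $A*G$ projective over $B*G$ since $A$ is projective over $B$. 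Axiom~(2), that $(B*G,\leq_{B*G})$ is directed, is Theorem~\ref{thm_ghiff}(3) applied to the directed algebra $(B,\leq_B)$.

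\textbf{The standard module axiom.} Axiom~(3) requires $A*G\otimes_{B*G}S\cong\Delta^{A*G}_{i'(S)}$ for every $S\in\Sim(B*G)$. Writing $S=\field G\otimes_{\field H_L}(L\otimes V)=B*G\otimes_{B*H_L}(L\otimes V)$ and using associativity,
\begin{align*}
A*G\otimes_{B*G}S\cong A*G\otimes_{A*H_L}\bigl(A*H_L\otimes_{B*H_L}(L\otimes V)\bigr),
\end{align*}
so the heart of the matter is the subclaim
\begin{align*}
A*H_L\otimes_{B*H_L}(L\otimes V)\cong\Delta^A_{i(L)}\otimes V\qquad\text{as }A*H_L\text{-modules,}
\end{align*}
witnessed by the map $(a\otimes h)\otimes(x\otimes v)\mapsto(a\otimes\tr_h^L(x))\otimes hv$. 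Well-definedness over $B*H_L$ and $H_L$-equivariance both reduce to the cocycle identity $\tr_h^L\circ h(\tr_{h'}^L)=\tr_{hh'}^L$ of Definition~\ref{def_modgact}; the underlying $A$-module structure is given by the canonical identification $A*H_L\cong A\otimes_B(B*H_L)$ of right $B*H_L$-modules together with $A\otimes_B(L\otimes V)\cong(A\otimes_B L)\otimes V=\Delta^A_{i(L)}\otimes V$. Applying $A*G\otimes_{A*H_L}-=\field G\otimes_{\field H_L}-$ and invoking Lemma~\ref{lemma_deltaforbasicA*G} then identifies the result with $\Delta^{A*G}_{i'(S)}$.

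\textbf{Reverse direction and main obstacle.} The converse is dual: $A*G\cong|G|A$ as right $B$-modules combined with freeness of $B*G$ over $B$ gives projectivity of $A$ over $B$; after arranging $\leq_{B*G}$ to be $G$-stable (analogously to the replacement step above), we set $\leq_B:=(\leq_{B*G})_{|G}$, apply Theorem~\ref{thm_ghiff}(3) to recover directedness of $(B,\leq_B)$, and obtain $A\otimes_B L=\Delta^A_{i(L)}$ for the bijection $i$ extracted from $i'$ via Proposition~\ref{prop_simplesofA*G} by reading the main calculation backwards. The principal obstacle in both directions is the $A*H_L$-module isomorphism in the standard-module step: the map must simultaneously intertwine the $A$-action (using $A\otimes_B L=\Delta^A_{i(L)}$ and $A*H_L\cong A\otimes_B(B*H_L)$), descend across the tensor product over $B*H_L$, and be equivariant for the $H_L$-action (requiring the cocycle identity for the $\tr_h^L$). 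Once this subclaim is in hand, everything else follows from the earlier functorial statements about skew group algebras and pseudostandard modules.
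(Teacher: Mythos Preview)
Your forward direction is correct and follows a genuinely different route from the paper. The paper argues globally: it establishes the natural isomorphism $I_G\circ(A\otimes_B-)\cong((A*G)\otimes_{B*G}-)\circ I_G$, then uses the $G$-equivariant isomorphism $\End_B(R_G(\field G\otimes L^B))\to\End_A(R_G(\field G\otimes L^A))$ to produce the bijection $\Sim(B*G)\to\Sim(A*G)$ directly, and finally invokes Proposition~\ref{prop_IDelta} to see that each $(A*G)\otimes_{B*G}L_j^{B*G}$ is a standard module because it is an indecomposable summand of a direct sum of standard modules. Your approach instead works locally through the stabilisers $H_L$, using the explicit parametrisation of simples from Proposition~\ref{prop_simplesofA*G} and the description of pseudostandard modules from Lemma~\ref{lemma_deltaforbasicA*G}. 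This is more concrete and makes the bijection $i'$ visible, at the cost of having to keep track of the $H_L$-equivariant structures; you should note that the $H_L$-structure on $\Delta^A_{i(L)}$ you use (induced from the chosen structure on $L$ via $A\otimes_B-$) is the one that makes Lemma~\ref{lemma_deltaforbasicA*G} apply, so that the $i'$ you write down really is the map $S\mapsto\topp((A*G)\otimes_{B*G}S)$.

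Your reverse direction, however, has a genuine gap. Saying one can ``read the main calculation backwards'' hides the crucial point: you must show that for each $L\in\Sim(B)$ the $A$-module $A\otimes_B L$ has \emph{simple} top, so that $i(L):=\topp(A\otimes_B L)$ is well-defined and yields a bijection $\Sim(B)\to\Sim(A)$. From your forward computation one only obtains $\field G\otimes_{\field H_L}((A\otimes_B L)\otimes V)\cong\Delta^{A*G}_{i'(S)}$, and restricting to $A$ shows that $A\otimes_B L$ is a direct sum of standard $A$-modules; it does not force that sum to have a single term. This is precisely where the paper invests real work: it constructs the commutative diagram relating $\End_B(R_GL^{B*G})*G$, $\End_A(R_G\Delta^{A*G})*G$ and $\End_A(R_GL^{A*G})*G$ via the isomorphisms $\theta_A,\theta_A',\theta_B$ of \cite[Lemma~8]{MartinezVilla}, and uses that the rightmost column is an isomorphism (because $B*G$ is exact Borel in $A*G$) to deduce that $\End_B(R_GL^{B*G})\to\End_A(\topp(A\otimes_B R_GL^{B*G}))$, $f\mapsto\topp(\id_A\otimes f)$, is an isomorphism. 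Only then does $\End_A(\topp(A\otimes_B L_i^B))\cong\field$ follow, giving simplicity of the top. Your sketch needs either this argument or a substitute for it.
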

\begin{proof}
  First of all, note that $B*G$ is a well-defined subalgebra of $A*G$, since $g(B)\subseteq B$ for all $g\in G$, and that $A*G$ is \puhh by Theorem \ref{thm_ghiff}.\\
  Moreover, we have isomorphisms
  \begin{align*}
    A\otimes_B gX\rightarrow g(A\otimes_B X), a\otimes gx\rightarrow g\cdot(g^{-1}(a))\otimes x
  \end{align*}
  with inverses
  \begin{align*}
    g(A\otimes_B X)\rightarrow A\otimes_B gX, g(a\otimes x)\rightarrow g(a)\otimes gx
  \end{align*}
  which give rise to a natural isomorphism $A\otimes_B g-\rightarrow g\circ (A\otimes_B -)$.
  This induces natural isomorphisms $I_G\circ (A\otimes_B -)\cong ((A*G)\otimes_{B*G}-)\circ I_G$ and $(A\otimes_B -)\circ R_G\cong  R_G\circ ((A*G)\otimes_{B*G}-)$ given by the isomorphisms
  \begin{align*}
    \field G\otimes (A\otimes_B X)\rightarrow (A*G)\otimes_{B*G}(\field G\otimes X), g\otimes (a \otimes x)\mapsto (g(a)\otimes g)\otimes (e\otimes x)\\
    (A\otimes_B R_GY\rightarrow R_G((A*G)\otimes_{B*G}Y), a\otimes y\mapsto (a\otimes e)\otimes y.
  \end{align*}
  Now assume that $(B, \leq_B)$ is an exact Borel subalgebra of $A$.\\
  Dually to the induction functor $I_G(M):=\field G\otimes M$ from left $B$-modules to left $B*G$-modules defined in \ref{def_I+R}, we can consider the induction functor $I'_G(M):=M\otimes \field G$ from right $B$-modules to right $B*G$-modules, and analogously to Proposition \ref{prop_IR+RI} obtain that this preserves and reflects projectives. Now $A*G\cong A\otimes \field G=I_G'(A)$, so that since $A$ is projective as a right $B$-module, $A*G$ is projective as a right $B*G$-module.\\
  Let $(L_i^B)_{1\leq i\leq n}$ be a set of representatives of the isomorphism classes of simple $B$-modules. Then by assumption
  $L_i^A:=\Top(A\otimes_B L_i^B)$ gives rise to a set $(L_i^A)_{1\leq i\leq n}$ of representatives of the isomorphism classes of simple $A$-modules such that $A\otimes_B L_i^B\cong \Delta_{L_i^A}$ and $L_i^B\leq L_j^B$ if and only if $L_i^A\leq L_j^A$. 
In particular, since $\leq_A$ is $G$-invariant, so is $\leq_B$. Thus, it induces via Proposition \ref{prop_bijectionorders} a partial order $\leq_{B*G}$ on $\Sim(B*G)$, and by Theorem \ref{thm_ghiff}, $(B*G, \leq_{B*G})$ is directed.\\
  Set $L^A:=\bigoplus_{i}L_i^A$ and $L^B:=\bigoplus_i L_i^B$.
  Then, since by Lemma \ref{lemma_radical}
  \begin{align*}
  R_G(\field G\otimes L^A)&=R_G(\field G\otimes \Top(A\otimes_B L^B))\\
  &\cong \Top(R_G((A*G)\otimes_{B*G}(\field G\otimes L^B))),
  \end{align*} 
  we obtain an isomorphism
  \begin{align*}
    \phi:\End_B(R_G(\field G\otimes L^B))&\rightarrow \End_A(R_G(\field G\otimes L^A)),\\
     f&\mapsto \Top(\id_A\otimes f).
  \end{align*}
  This is $G$-equivariant, where the $G$-action is given by $g\cdot f(x)=gf(g^{-1}x)$.\\
  Hence this restricts to an isomorphism 
  \begin{align*}
    \End_{B*G}(\field G\otimes L^B)=\End_B(R_G(\field G\otimes L^B))^G &\rightarrow \End_{A*G}(\field G\otimes L^A)=\End_A(R_G(\field G\otimes L^A))^G,\\
    f&\mapsto \Top(\id_A\otimes f).
  \end{align*}
  This implies that any indecomposable summand $L_j^{B*G}$ of $\field G\otimes L^B$ gives rise to an indecomposable summand $L_j^{A*G}:=\Top((A*G)\otimes_{B*G}L^{B*G})$ of 
  $\Top((A*G)\otimes_{B*G}(\field G\otimes L^B)$ and that two such summands $L_j^{A*G}$ and  $L_k^{A*G}$ are isomorphic if and only if $L_j^{B*G}$ and $L_k^{B*G}$ are isomorphic.\\
  Since by Corollary \ref{corollary_dividessimples} every simple $A*G$-module is isomorphic to a summand of $\field G\otimes  L^A$ and every simple $B*G$-module is isomorphic to a summand of $\field G\otimes L^B$, this gives rise to a bijection
  \begin{align*}
    \Sim(B*G)\rightarrow \Sim(A*G)\\
    L_j^{B*G}\mapsto L_j^{A*G}=\Top((A*G)\otimes_{B*G}L_j^{B*G}).
  \end{align*}
 Moreover, by Proposition \ref{prop_IDelta}, 
  \begin{align*}
   A*G\otimes_{B*G}\field G \otimes L^B&\cong \field G\otimes A\otimes_B L^B\\
   &\cong \field G\otimes \bigoplus_{L_i^A\in \Sim(A)}\Delta_{L_i^A}\\
   &\cong \bigoplus_{L_i^A\in \Sim(A)}\bigoplus_{L_j^{A*G}\in \Sim(A*G)}[\field G\otimes L_i^A:L_j^{A*G}]\Delta_{L_j^{A*G}}.
  \end{align*}
  Since for every simple $B*G$-module $L_j^{B*G}$ we have that $A*G\otimes_{B*G}L_j^{B*G}$ is a summand of $A*G\otimes_{B*G}\field G \otimes L^B$, the module $A*G\otimes_{B*G}L_j^{B*G}$ is isomorphic to a direct sum of standard modules. Since it has indecomposable top $L_j^{A*G}$, this implies that $A*G\otimes_{B*G}L_j^{B*G}\cong \Delta_{L_j^A*G}$.\\
  Moreover,
  \begin{align*}
    L_i^{B*G}<_{B*G}L_j^{B*G}\Leftrightarrow R_G L_i^{B*G}<_{B}R_GL_j^{B*G}\Leftrightarrow \Top(A\otimes_B R_GL_i^{B*G})<_A \Top(A\otimes_B R_GL_j^{B*G})\\
    \Leftrightarrow R_G(\Top((A*G)\otimes_{B*G}L_i^{B*G}))<_A R_G(\Top((A*G)\otimes_{B*G}L_j^{B*G}))\\
    \Leftrightarrow \Top((A*G)\otimes_{B*G}L_i^{B*G})<_{A*G}\Top((A*G)\otimes_{B*G}L_j^{B*G})\Leftrightarrow L_i^{A*G}<L_j^{A*G}.
  \end{align*} 
  Thus $B*G$ is an exact Borel subalgebra of $A*G$.\\

  On the other hand, suppose that $(B*G, \leq_{B*G})$ is an exact Borel subalgebra of $(A*G, \leq_{A*G})$.\\
  Dually to the restriction functor $R_G(M)$ from left $B*G$-modules to left $B$-modules defined in \ref{def_I+R}, we can consider the restriction functor $R'_G(M)$ from right $B*G$-modules to right $B$-modules, and analogously to Proposition \ref{prop_IR+RI} obtain that this preserves and reflects projectives. Now $A$ is a direct summand of $R'_G(A*G)$ as right $B$-modules, so that since $A*G$ is projective as a right $B*G$-module, $A$ is projective as a right $B$-module.\\
  Let $(L_j^{B*G})_{1\leq j\leq m}$ be a set of representatives of the isomorphism classes of simple $B*G$-modules.
  By assumption  $L_j^{A*G}:=\Top((A*G)\otimes_{B*G}L_j^{B*G})$ gives rise to a set of representatives $(L_j^{A*G})_{1\leq j\leq m}$ of the isomorphism classes of simple $A*G$-modules such that 
  $L_{i}^{B*G}\leq_{B*G} L_j^{B*G}$ if and only if $L_{i}^{A*G}\leq_{A*G} L_j^{A*G}$ and $(A*G)\otimes_{B*G}L_j^{B*G}\cong \Delta_{L^{A*G}_j}$.\\
  In particular, since $\leq_{A*G}$ is $G$-stable, so is $\leq_{B*G}$, so that by Proposition \ref{prop_bijectionorders}, there is an induced partial order $\leq_B$ on $\Sim(B)$, and by Theorem \ref{thm_ghiff}, $(B, \leq_{B})$ is directed.\\
  Set $L^{A*G}:=\bigoplus_{j}L_j^{A*G}$ and $L^{B*G}:=\bigoplus_i L_j^{B*G}$.
  The group $G$ acts on $\End_{B}(R_GL^{B*G})$ via $g\cdot f:=gf(g^{-1}-)$. By \cite[Lemma 8]{MartinezVilla}, this induces an isomorphism
  \begin{align*}
    \theta_B:\End_{B}(R_GL^{B*G})*G&\rightarrow \End_{B*G}(L^{B*G}\otimes \field G), \\f\otimes g&\mapsto (x\otimes h\mapsto (h\cdot f)(x)\otimes hg).
  \end{align*}
  Analogously, we obtain an isomorphism 
  \begin{align*}
    \theta_A:\End_{A}(R_GL^{A*G})*G\rightarrow \End_{A*G}(L^{A*G}\otimes \field G)
  \end{align*}
  and an isomorphism
  \begin{align*}
    \theta_A':\End_{A}(R_G\Delta^{A*G})*G\rightarrow \End_{A*G}(\Delta^{A*G}\otimes \field G).
  \end{align*}
  Moreover, if
  \begin{align*}
    \alpha:R_G(\Delta^{A*G})=R_G(A*G\otimes_{B*G}L^{B*G})\rightarrow A\otimes_B R_G(L^{B*G}), (a\otimes g)\otimes x\mapsto a\otimes gx,
  \end{align*}
  denotes the canonical isomorphism, we obtain a commutative diagram
  \begin{center}
\[\begin{tikzcd}[ampersand replacement=\&]
	{\End_B(R_GL^{B*G})*G} \&\& {\End_{B*G}(L^{B*G}\otimes \field G)} \\
	{\End_A(A\otimes_B R_GL^{B*G})*G} \& {\End_{A}(R_G\Delta^{A*G})*G} \& {\End_{A*G}(\Delta^{A*G}\otimes \field G)} \\
	{\End_A(\Top(A\otimes_BR_GL^{B*G}))\otimes \field G} \& {\End_A(R_GL^{A*G})*G} \& {\End_{A*G}(L^{A*G}\otimes \field G) }
	\arrow["{\theta_A'}", from=3-2, to=3-3]
	\arrow["{\theta_A}", from=2-2, to=2-3]
	\arrow["{\theta_B}", from=1-1, to=1-3]
	\arrow["{\id_{A*G}\otimes-}", from=1-3, to=2-3]
	\arrow["{\Top^A\otimes \id_{\field G}}", from=2-2, to=3-2]
	\arrow["{\id_{A}\otimes-}", from=1-1, to=2-1]
	\arrow["{\rho_{\alpha}^{-1}\otimes \id_{\field G}}", from=2-1, to=2-2]
	\arrow["{\Top^{A*G}}", from=2-3, to=3-3]
	\arrow["{\Top^A\otimes \id_{\field G}}", from=2-1, to=3-1]
	\arrow["{\rho_{\Top^A(\alpha)}^{-1}}", from=3-1, to=3-2]
\end{tikzcd}\]
   \end{center}
  where the horizontal maps are isomorphisms and $\Top^R$ denotes the functor
  \begin{align*}
    \Top^R: \modu R&\rightarrow \modu L^R, X \mapsto \Top(X),\\
     (f:X\rightarrow Y)&\mapsto (\Top(f):\Top(X)\rightarrow \Top(Y), x+ \rad(R)X\mapsto f(x)+\rad(R)Y),
\end{align*} 
for $R\in \{A, A*G\}$.\\
Additionally, by assumption, the composition in the rightmost column is an isomorphism.
Thus so is the composition in the leftmost column, which is given by 
\begin{align*}
  \End_B(R_GL^{B*G})*G\rightarrow \End_A(\Top(A\otimes_BR_GL^{B*G}))\otimes \field G, f\otimes g\mapsto \Top(\id_A\otimes f)\otimes g
\end{align*}
This restricts to an isomorphism
\begin{align*}
  \End_B(R_GL^{B*G})\rightarrow \End_A(\Top(A\otimes_BR_GL^{B*G})) f\mapsto \Top(\id_A\otimes f).
\end{align*}
Thus we obtain a bijection between isomorphism classes of indecomposable summands of $R_GL^{B*G}$ and $\Top(A\otimes_BR_GL^{B*G})$ given by $S\mapsto \Top(A\otimes S)$. Since by Corollary \ref{corollary_dividessimples} $L^B|R_{G}L^{B*G}$ and $L^A|R_{G}L^{A*G}$, this gives rise to a bijection 
\begin{align*}
  \Sim(B)\rightarrow \Sim(A), L_i^B \mapsto L_i^A:=\Top(A\otimes_B S).
\end{align*}
  Additionally, since $A\otimes_B R_G L^{B*G}\cong R_G((A*G)\otimes_{B*G} L^{B*G})\cong R_G\Delta^{A*G}$ is isomorphic to a direct sum of standard modules by Proposition \ref{prop_IDelta} and $A\otimes_B L_i^B$ is a summand of $A\otimes_B R_G L^{B*G}$ with simple top $L_i^A$, we obtain that $A\otimes_B L_i^B\cong \Delta_i^A$.
  Moreover, we have that 
  \begin{align*}
    L_i^B<_B L_j^B\Leftrightarrow &I_G L_i^B<_{B*G} I_GL_j^B \\
    \Leftrightarrow &\Top((A*G)\otimes_{B*G}I_G L_i^B)<_{A*G} \Top((A*G)\otimes_{B*G}L_j^B)\\
    \Leftrightarrow &I_G (\Top(A\otimes_B L_i^B))<_{A*G} I_G(\Top(A\otimes_B L_j^B)) \\
    \Leftrightarrow \Top(A\otimes_B L_i^B)<_A \Top(A\otimes_B L_j^B)\Leftrightarrow &L_i^A<_A L_j^A.
  \end{align*}
  Hence $B$ is an exact Borel subalgebra of $A$.
\end{proof}
\begin{proposition}
  Let $(A, \leq_A)$ be quasi-hereditary and let $B$ be a subalgebra of $A$ such that $g(B)=B$ for all $g\in G$. Let $(B*G, \leq_{B*G})$ be the corresponding exact Borel subalgebra of $(A*G, \leq_{A*G})$
  Then the following statements hold:
  \begin{enumerate}
    \item $B$ is a strong exact Borel subalgebra if and only if $B*G$ is a strong exact Borel subalgebra.
    \item $B$ is a normal exact Borel subalgebra if and only if $B*G$ is a normal exact Borel subalgebra.
    \item $B$ is a homological exact Borel subalgebra if and only if $B*G$ is a homological exact Borel subalgebra.
    \item $B$ is a regular exact Borel subalgebra if and only if $B*G$ is a regular exact Borel subalgebra.
  \end{enumerate}
   /homological/normal/regular if and only if $B$ is strong/homological/normal/regular.
\end{proposition}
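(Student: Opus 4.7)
The plan is to establish each of the four equivalences by transferring the relevant data between the pairs $(A,B)$ and $(A*G,B*G)$, using the results of Sections~\ref{s1}--\ref{s2} together with the constructions in the proof of Theorem~\ref{thm_Borel}.

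For (1), I would apply Lemma~\ref{lemma_strongBorel}, which reduces the strong property to the inclusion $A\rad(B)\subseteq\rad(A)$, and analogously for $B*G\subseteq A*G$. By Lemma~\ref{lemma_radical} we have $\rad(A*G)=\rad(A)\otimes\field G$ and $\rad(B*G)=\rad(B)\otimes\field G$, and since $g(B)=B$ implies $g(\rad(B))=\rad(B)$ for every $g\in G$, a direct computation with the twisted multiplication of $A*G$ yields $(A*G)\rad(B*G)=A\rad(B)\otimes\field G$. Hence the required inclusion transfers in both directions.

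For (2), in the forward direction I start with a $B$-bilinear splitting $\pi\colon A\to B$ of $\iota$ with $\ker\pi$ a right ideal of $A$, and arrange $\pi$ to be $G$-equivariant by averaging inside the (nonempty, $G$-stable) affine space of right-ideal complements of $B$ in $A$; the hypothesis $|G|^{-1}\in\field$ is exactly what allows taking a $G$-fixed point. Then $\pi\otimes\id_{\field G}\colon A*G\to B*G$ is a $B*G$-bilinear splitting, and its kernel $\ker\pi\otimes\field G$ is a right ideal of $A*G$ by the computation $(c\otimes g)(a\otimes h)=cg(a)\otimes gh\in\ker\pi\otimes\field G$. The converse is analogous: restrict a splitting $\pi'\colon A*G\to B*G$ to $A\otimes e\subseteq A*G$, project onto $B\otimes e\cong B$, and average on the $A$-side, where $G$ acts by conjugation via the elements $1\otimes g$. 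I expect this to be the main technical obstacle, since naive averaging of $\pi$ does not in general preserve the identity $\pi(xa)=\pi(x)\pi(a)$ characterizing right-ideal kernels; one must work at the level of complements (exploiting that the set of right-ideal complements is $G$-stable and nonempty) to conclude that a $G$-equivariant normal splitting exists.

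For (3) and (4), the essential identity is
\begin{align*}
\Ext^n_{R*G}(I_GM,I_GN)\cong\bigoplus_{g\in G}\Ext^n_R(M,gN)\quad\text{for }R\in\{A,B\},
\end{align*}
following from exactness and projective-preservation of $I_G$ (Proposition~\ref{prop_IR+RI}) together with Frobenius reciprocity $\Hom_{R*G}(I_GM,-)\cong\Hom_R(M,R_G(-))$ and the decomposition $R_GI_GN\cong\bigoplus_g gN$. By Proposition~\ref{prop_simplesofA*G}, every simple $B*G$-module $S$ is a summand of $I_GL$ for some $L\in\Sim(B)$, and by Proposition~\ref{prop_IDelta} the corresponding standard $\Delta^{A*G}_S$ is a summand of $I_G\Delta_L^A$. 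Combining this with the natural iso $I_G\circ(A\otimes_B-)\cong((A*G)\otimes_{B*G}-)\circ I_G$ from the proof of Theorem~\ref{thm_Borel} and the identification $A\otimes_B gL\cong g\Delta_L^A\cong\Delta_{gL}^A$ (Lemma~\ref{lemma_gdelta=deltag}), the comparison map
\begin{align*}
(A*G)\otimes_{B*G}-\colon\Ext^*_{B*G}(S,S')\to\Ext^*_{A*G}(\Delta_S,\Delta_{S'})
\end{align*}
appears as a direct summand of a direct sum of maps $\Ext^*_B(L,gL')\to\Ext^*_A(\Delta_L^A,\Delta_{gL'}^A)$. The conditions of isomorphism in degrees $\geq 2$ and epimorphism in degree $1$ (homological), respectively isomorphism in all degrees $\geq 1$ (regular), therefore pass through this decomposition in both directions, and combining with (2) yields (4).
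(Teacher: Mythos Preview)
Your treatment of parts (1), (3) and (4) is essentially correct. For (1) you follow the paper's proof verbatim. For (3) and (4) you take a somewhat different route than the paper: rather than invoking the isomorphism $\End_B(R_GP)^G\cong\End_{B*G}(P)$ and exactness of the fixed-point functor, you use Frobenius reciprocity to decompose $\Ext^n_{B*G}(I_GL,I_GL')$ as $\bigoplus_{g}\Ext^n_B(L,gL')$ and observe that the induction map splits accordingly. This is a legitimate alternative, and arguably more elementary; the paper's approach has the advantage that it treats both directions symmetrically via the identifications $\End_{B*G}(-)\cong\End_B(R_G(-))^G$ and $\End_{B}(R_G(-))*G\cong\End_{B*G}(-\otimes\field G)$, which already appeared in the proof of Theorem~\ref{thm_Borel}.

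Part (2), however, contains a real difficulty and a misconception. First, in the forward direction no averaging is needed: if $\pi\colon A\to B$ is any right $B$-linear splitting whose kernel is a right ideal of $A$, then $\pi\otimes\id_{\field G}$ is automatically right $B*G$-linear (because $g(b)\in B$), and $\ker(\pi)\otimes\field G$ is automatically a right ideal of $A*G$ (because $\ker\pi$ is a right ideal of $A$). No $G$-equivariance of $\pi$ is required. Second, your proposed fix via ``the affine space of right-ideal complements'' is not justified: the set of right $B$-linear splittings of $\iota$ is affine, but the subset whose kernel is a right ideal is not obviously closed under convex combinations, so the averaging argument breaks down. (Incidentally, the relevant identity is $\pi(xa)=\pi(\pi(x)a)$, not $\pi(xa)=\pi(x)\pi(a)$.) For the converse direction the paper avoids this problem entirely by a different device: one takes $G$-fixed points for the action by left multiplication, noting that $(A*G)^G\cong A$ via $a\mapsto\frac{1}{|G|}\sum_g g(a)\otimes g$ and that $(-)^G$ is exact; then $(\pi')^G$ transports directly to a normal splitting of $\iota$. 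You should replace your averaging strategy in (2) by this fixed-point argument.
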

\begin{proof}
  \begin{itemize}
    \item[1.] Suppose $B$ is a strong exact Borel subalgebra.
  Then by Lemma \ref{lemma_strongBorel}, $A\rad(B)\subseteq \rad(A)$. Hence $A*G\rad(B*G)\subseteq \rad(A*G)$ by Lemma \ref{lemma_radical}, so that again by Lemma \ref{lemma_strongBorel} $B*G$ is a strong Borel subalgebra of $A*G$. 
  On the other hand, suppose that $B*G$ is a strong Borel subalgebra of $A*G$. Then $A*G\rad(B*G)\subseteq \rad(A*G)$, so that again by Lemma \ref{lemma_radical}
  \begin{align*}
    A\rad(B)\subseteq (A*G\rad(B*G))\cap A\subseteq \rad(A*G)\cap A=\rad(A).
  \end{align*}

\item[2.]   Suppose that $B$ is normal. Then the inclusion $\iota:B\rightarrow A$ has a splitting $\pi:A\rightarrow B$ as right $B$-modules whose kernel is a right ideal of $A$. Since tensoring over $\field$ is exact, the inclusion $\iota\otimes\id_{\field G}$ has the splitting $\pi\otimes \id_{\field G}$ which is a right $B*G$-module homomorphism whose kernel is a right ideal of $A*G$.\\
  On the other hand, suppose that $B*G$ is normal. Then $\iota\otimes\id_{\field G}$ has a splitting $\pi':A*G\rightarrow B*G$ of right $B*G$-modules whose kernel is a right ideal in $A*G$. Since the fixed point functor $-^G$ for the $G$-action given by left multiplication is exact by \cite[Lemma 3]{MartinezVilla}, $(\pi')^G$ is a splitting of the embedding $(\iota\otimes \id_{\field G})^G$ as right $(B*G)^G$-modules such that its kernel is a right ideal in $(A*G)^G$. Now since the upwards arrows in the commutative diagram
  \[\begin{tikzcd}[ampersand replacement=\&]
    {(B*G)^G} \&\& {(A*G)^G} \\
    B \&\& A
    \arrow["{(\iota\otimes \id_{\field G})^G}", from=1-1, to=1-3]
    \arrow["{b\mapsto \frac{1}{|G|}\sum_{g\in G}g(b)\otimes g}", from=2-1, to=1-1]
    \arrow["\iota", from=2-1, to=2-3]
    \arrow["{a\mapsto \frac{1}{|G|}\sum_{g\in G} g(a)\otimes g}"', from=2-3, to=1-3]
  \end{tikzcd}\]
  are isomorphisms of algebras, $(\pi')^G$ induces a splitting of $\iota$ as right $B$-modules such that its kernel is a right ideal in $A$.\\
\item[3.+4.] Assume that $B$ is homological, resp. regular. In the latter case, we have already seen that $B*G$ is normal.
   Let $P^B$ be a projective resolution of $L^B$. Then $A\otimes_B P^B$ is a projective resolution of $\Delta^A$, $\field G\otimes P^B$ is a projective resolution of $\field G\otimes L^B$, $\field G\otimes (A\otimes_B P^B)$ is a projective resolution of $\field G\otimes (A\otimes_B L^B)$, and similarly for the restriction of the induction. Now $G$ acts on $\End_B(R_G(\field G\otimes P^B))$ and on  $\End_A(R_G(\field G\otimes (A\otimes_B P^B)))$, and since 
    $R_G(\field G\otimes L^B)$ is semisimple, the map
    \begin{align*}
      \End_B(R_G(\field G\otimes P^B))\rightarrow \End_A(A\otimes_B R_G(\field G\otimes P^B)), f\mapsto \id_A\otimes f
    \end{align*}
    is an epimorphism in homology of degree one and an isomorphism in homology of degree strictly greater than one, resp. an isomorphism in homology of degree strictly greater than zero.\\
    Composing with the canonical isomorphism $A\otimes_B R_G(\field G\otimes P^B)\cong R_G((A*G)\otimes_{B*G}(\field G\otimes P^B))$ yields an isomorphism
    \begin{align*}
      \End_B(R_G(\field G\otimes P^B))\rightarrow \End_A(R_G(A*G)\otimes_{B*G}(\field G\otimes P^B)), f\mapsto \id_{A*G}\otimes f.
    \end{align*}
    Moreover, it is clearly $G$-equivariant, so it induces a homomorphism
    \begin{align*}
      &\End_{B*G}(\field G\otimes P^B)=\End_B(R_G(\field G\otimes P^B))^G\\ \rightarrow  &\End_{A*G}((A*G)\otimes_{B*G}(\field G\otimes P^B))=\End_A(R_G(A*G)\otimes_{B*G}(\field G\otimes P^B))^G,\\ f&\mapsto \id_{A*G}\otimes f.
    \end{align*}
    Since the fixed point functor $-^G$ is exact, this is an epimorphism in homology of degree one and an isomorphism in homology of degree strictly greater than one, resp. an isomorphism in homology of degree strictly greater than zero.\\ Thus we obtain an epimorphism in degree one and an isomorphism in degree strictly greater than one, resp. an isomorphism in degree strictly greater than zero
    \begin{align*}
      \Ext_{B*G}^{>0}(\field G\otimes L^B, \field G\otimes L^B)&\rightarrow  \Ext_{A*G}^{>0}((A*G)\otimes_{B*G}L^B, (A*G)\otimes_{B*G}L^B),\\ [f]&\mapsto [\id_{A*G}\otimes f].
    \end{align*}
    The result now follows from Corollary \ref{corollary_dividessimples}.\\
    
    On the other hand, suppose $B*G$ is homological resp. regular. In the latter case, we have already seen that $B$ is normal.
    Let $P^{B*G}$ be a projective resolution of $L^{B*G}$. Then $(A*G)\otimes_{B*G} P^{B*G}$ is a projective resolution of $\Delta_{A*G}$,  $R_GP^{B*G}$ is a projective resolution of $R_G(L^{B*G})$, and $R_G((A*G)\otimes_{B*G} P^{B*G})\cong A\otimes_B R_G(P^{B*G})$ is a projective resolution of $R_G(A*G)\otimes_{B*G} L^{B*G})\cong A\otimes_B R_G(L^{B*G})$ and similarly for the induction of the restriction.
    Since $L^{B*G}\otimes \field G$ is semisimple, we have that by assumption
    \begin{align*}
      \Ext^*_{B*G}(L^{B*G}\otimes \field G, L^{B*G}\otimes \field G)\rightarrow \Ext^*_{A*G}(\Delta^{A*G}\otimes \field G, \Delta^{A*G}\otimes \field G)
    \end{align*}
    is an epimorphism in degree one and an isomorphism in degree strictly greater than one, resp. an isomorphism in degree strictly greater than zero.\\
    Moreover, $G$ acts on $\End_{B}(R_G(P^{B*G}))$ and on $\End_A(R_G((A*G)\otimes_{B*G}P^{B*G}))$ via $g\cdot f=gf(g^{-1}-)$ and by \cite[Lemma 8]{MartinezVilla}, this induces isomorphisms of complexes
    \begin{align*}
      \theta_B: \End_B(R_G(P^{B*G}))*G&\rightarrow \End_{B*G}(P^{B*G}\otimes \field G),\\ f\otimes g&\mapsto (x\otimes h\mapsto (h\cdot f)(x)\otimes hg),
    \end{align*}
    and 
    \begin{align*}
      \theta_A: \End_A(R_G((A*G)\otimes_{B*G}P^{B*G}))*G&\rightarrow \End_{A*G}((A*G)\otimes_{B*G} P^{A*G}\otimes \field G),\\ f\otimes g&\mapsto (x\otimes h\mapsto (h\cdot f)(x)\otimes hg).
    \end{align*}
    Moreover, conjugating with the canonical isomorphism
    \begin{align*}
      \alpha: R_G(A*G\otimes_{B*G}P^{B*G})\rightarrow A\otimes_B R_G(P^{B*G}), (a\otimes g)\otimes p\mapsto a\otimes gp,
    \end{align*}
    we obtain a commutative diagram of complexes
    \begin{center}
  \[\begin{tikzcd}[ampersand replacement=\&]
    {\End_B(R_G(P^{B*G}))*G} \&\& {\End_A(A\otimes_B R_G(P^{B*G}))*G} \\
    \&\& {\End_A(R_G((A*G)\otimes_{B*G}P^{B*G}))*G} \\
    {\End_{B*G}(P^{B*G}\otimes \field G)} \&\& {\End_{A*G}((A*G)\otimes_{B*G} P^{A*G}\otimes \field G)}
    \arrow["{\theta_A}", from=2-3, to=3-3]
    \arrow["{\theta_B}"', from=1-1, to=3-1]
    \arrow["{f\mapsto \id_{A*G}\otimes f}"', from=3-1, to=3-3]
    \arrow["{\rho_{\alpha}^{-1}}", from=1-3, to=2-3]
    \arrow["{f\otimes g\mapsto \id_A\otimes f\otimes g}", from=1-1, to=1-3]
  \end{tikzcd}\]
    \end{center}
    where the vertical maps are isomorphisms and $\rho_{\alpha}$ denotes conjugation by $\alpha$.
    Taking homology, we obtain that the vector space homomorphism
    \begin{align*}
      \Ext^*_B(R_G (L^{B*G}), R_G(L^{B*G}))*G&\rightarrow \Ext^*_A(A\otimes_B R_G (L^{B*G}), A\otimes_B R_G(L^{B*G}))*G,\\
       f\otimes g&\mapsto \id_A\otimes f\otimes g,
    \end{align*}
    is an epimorphism in degree one and an isomorphism in degree strictly greater than one, resp. an isomorphism in degree strictly greater than zero.\\
    Moreover, since $A\otimes_B L_i^B=\Delta_{L_i^A}$ for every $L_i^B\in \Sim(B)$ and $$R_G(L^{B*G})\cong \bigoplus_{L_i^B\in \Sim(B)}[R_G(L^{B*G}):L_i^B]L_i^B$$
    by Proposition \ref{prop_IR+RI}, this induces for every $L_i^B, L_j^B\in \Sim(B)$ a vector space homomorphism
    \begin{align*}
      [R_G(L^{B*G}):L_i^B][R_G(L^{B*G}):L_j^B]&\Ext^*_B(L_i^B, L_j^B)\\\rightarrow &[R_G(L^{B*G}):L_i^B][R_G(L^{B*G}):L_j^B]\Ext^*_A(\Delta_{L_i^A}, \Delta_{L_j^A}),\\ f\mapsto &\id_A\otimes f,
    \end{align*}
    which is an epimorphism in degree one and an isomorphisms in degree strictly greater than one, resp. an isomorphism in degree strictly greater than zero.
    The result now follows again from Corollary \ref{corollary_dividessimples}.\qedhere
  \end{itemize}
\end{proof}
\section{Auslander algebras of Nakayama algebras}\label{s4}
In this section, we will give, at some length, an example of the above. However, before that, we will need two more general statements.\\
\begin{lemma}\label{lemma_commutativegroup}
  Let $A$ be a finite-dimensional algebra and suppose $G$ is a commutative group acting on $A$ via automorphisms. Let $[X]$ be an isomorphism class of indecomposable $A$-modules and let $H_{[X]}$ be the stabilizer of $[X]$ in $G$. Then there is a representative $Y\in [X]$ such that $Y$ has an $H_{[X]}$-action.
\end{lemma}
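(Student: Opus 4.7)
The plan is to proceed by obstruction theory: for any choice of isomorphisms $\phi_h\colon hX\to X$, $h\in H:=H_{[X]}$, the failure of these to satisfy the cocycle identity of Definition~\ref{def_modgact} is measured by a 2-cocycle with values in $\End_A(X)^\times$, and the goal is to show that this cocycle class is trivial so that the $\phi_h$'s can be rescaled into a genuine $H$-action.

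First, I would exploit that $X$ is indecomposable to conclude that $R:=\End_A(X)$ is a local finite-dimensional $\field$-algebra with $R/\rad(R)\cong \field$ (using that $\field$ is algebraically closed). For any $h\in H$ fix an $A$-linear isomorphism $\phi_h\colon hX\to X$ with $\phi_e=\id_X$, and set
\[
c(g,h):=\phi_g\circ g(\phi_h)\circ \phi_{gh}^{-1}\in R^\times.
\]
A direct computation using Definition~\ref{def_gM} verifies that $c$ is a 2-cocycle in $Z^2(H,R^\times)$ for the action $g\cdot u:=\phi_g\circ g(u)\circ\phi_g^{-1}$, and that replacing each $\phi_h$ by $u_h\phi_h$ with $u_h\in R^\times$ alters $c$ by the corresponding coboundary. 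Hence $X$ admits an $H$-action in the sense of Definition~\ref{def_modgact} if and only if $[c]=0$ in $H^2(H,R^\times)$.

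Next, I would reduce modulo the radical. The short exact sequence $1\to 1+\rad(R)\to R^\times\to \field^\times\to 1$ together with the filtration of $1+\rad(R)$ by the $1+\rad(R)^i$ has successive quotients isomorphic to the $\field$-vector spaces $\rad(R)^i/\rad(R)^{i+1}$. Since $|H|$ is invertible in $\field$, Maschke-type vanishing gives $H^i(H,\rad(R)^j/\rad(R)^{j+1})=0$ for $i>0$; a d\'evissage then yields $H^i(H,1+\rad(R))=0$ for $i>0$, so the long exact sequence in cohomology makes $H^2(H,R^\times)\hookrightarrow H^2(H,\field^\times)$ injective. The action of $H$ on $\field^\times$ is trivial because any $\field$-algebra automorphism of $R$ fixes the residue field. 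So it suffices to show that the image $\bar c\in H^2(H,\field^\times)$ of $c$ is a coboundary.

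The main obstacle is the final step, which is where commutativity of $H$ enters. Since $H$ is abelian and $\field$ is algebraically closed, $[\bar c]$ is detected by its alternating commutator pairing $(g,h)\mapsto \bar c(g,h)\bar c(h,g)^{-1}$, which unwinds to the residue of the set-theoretic commutator $[\phi_g,\phi_h]=\phi_g\phi_h\phi_g^{-1}\phi_h^{-1}\in R^\times$. My plan is to trivialize this pairing by rescaling the $\phi_h$'s along a system of generators of $H$: for a cyclic factor $\langle g\rangle$ of order $n$, the element $\phi_g^n\in R^\times$ has some residue $\lambda\in\field^\times$, and algebraic closure of $\field$ supplies an $n$-th root $\mu$ with $\mu^{-n}\phi_g^n\in 1+\rad(R)$; applying Step~2 within $1+\rad(R)$ one further adjusts to force $\phi_g^n=\id_X$. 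Combining such rescalings over a set of generators of the finite abelian group $H$ and exploiting that generators commute in $H$ to kill the cross-commutators modulo $\rad(R)$ should produce a trivial commutator pairing. This last compatibility across generators is, I expect, the hardest point of the argument, since the Schur multiplier $H^2(H,\field^\times)$ is generally nonzero for non-cyclic abelian $H$; the proof must use the fact that the cocycle arising from the geometric action of $H$ on $\modu A$ has a very specific form that admits a coherent trivialization, rather than an arbitrary projective-representation-type cocycle.
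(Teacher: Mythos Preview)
Your reduction is correct: the obstruction to equipping $X$ with an $H$-action is a class $[c]\in H^2(H,R^\times)$, and the d\'evissage through $1+\rad(R)$ legitimately reduces this to $[\bar c]\in H^2(H,\field^\times)$ with trivial $H$-action. You are also right that this last group is nonzero for non-cyclic abelian $H$ and that this is where the difficulty lies. The gap is that your proposed final step cannot be completed: there is nothing about the geometric origin of $\bar c$ that forces it to be a coboundary, and in fact the lemma as stated is false. Take $\field$ of characteristic $\neq 2$, $A=\Mat_2(\field)$, $G=(\mathbb Z/2)^2=\langle g_1,g_2\rangle$ acting by conjugation by $\sigma_1=\operatorname{diag}(1,-1)$ and $\sigma_2=\bigl(\begin{smallmatrix}0&1\\1&0\end{smallmatrix}\bigr)$; since $\sigma_1\sigma_2=-\sigma_2\sigma_1$ and the sign is central, this is a well-defined action of the \emph{abelian} group $G$. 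The unique indecomposable $X=\field^2$ has $H_{[X]}=G$, but setting $u_i=\sigma_i\otimes g_i\in A*G$ one checks that $u_1,u_2$ centralize $A$, square to $1$, and satisfy $u_1u_2=-u_2u_1$, whence $A*G\cong\Mat_2(\field)\otimes\Mat_2(\field)\cong\Mat_4(\field)$. Its unique simple module restricts to $2X$, so no $Y\cong X$ carries a $G$-action. In your language, $\bar c(g_1,g_2)\bar c(g_2,g_1)^{-1}=-1$, so $[\bar c]$ is the nontrivial element of $H^2((\mathbb Z/2)^2,\field^\times)$; the ``cross-commutators'' you hope to kill are exactly this alternating pairing, and commutativity of $H$ alone gives no leverage on it.

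For comparison, the paper's argument is different in form --- it transports the $A*H$-structure from $\field H\otimes X$ to $nX$, asserts a group homomorphism $\varphi\colon H\to\Mat_n(\End_A(X))^\times$, reduces modulo $\rad$, and appeals to a common eigenvector --- but it has the same underlying gap. The $H$-action on $nX$ is by $A$-\emph{twisted}-linear maps; untwisting via chosen isomorphisms $hX\cong X$ yields only a \emph{projective} representation of $H$ in $\Mat_n(\End_A(X))^\times$, with cocycle precisely $c$. A common eigenvector for its reduction to $\Gl_n(\field)$ would amount to a one-dimensional $\bar c$-projective representation of $H$, which exists iff $\bar c$ is a coboundary. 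So the paper's passage from ``$A*H$-module structure on $nX$'' to ``group homomorphism into $\Mat_n(\End_A(X))^\times$'' hides exactly the obstruction you isolated.
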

\begin{proof}
 Let $n:=|H_{[X]}|$.
 Let $X$ be any representative of $[X]$ and consider the $A*H_{[X]}$-module $\field H_{[X]}\otimes X$. As an $A$-module, this is isomorphic to the direct sum $nX$, so that via this isomorphism $nX$ obtains likewise the structure of an $A*H_{[X]}$-module. Thus we obtain a group homomorphism $\varphi: H_{[X]}\rightarrow \Mat_{n}(\End_A(X))^*$, where $\Mat_{n}(\End_A(X))^*$ denote the invertible elements of $\Mat_{n}(\End_A(X))^*$. Since $\End_A(X)$ is local with residue field $\field$, this induces a group homomorphism
  $\varphi':H_{[X]}\rightarrow \Gl_{n}(\field)\cong \Gl_n(\Aut(X))$. Since $G$ and thus $H_{[X]}$ is commutative, the matrices in the image have a common eigenvector. This corresponds to a summand $Y|nX$, $Y\cong X$ which is stable under the action by $H_{[X]}$ on $nX$ defined by $\varphi'$, and thus $Y$ has an $H_{[X]}$-action. 
\end{proof}
The following proposition is related to \cite[Theorem 1.3 (c) iii]{ReitenRiedtmann}.
\begin{proposition}\label{prop_auslanderalgebra}
  Let $G$ be a group acting via automorphisms on an algebra $D$ of finite representation type. Then there is an induced $G$-action on an algebra $A'$ Morita equivalent to the Auslander algebra $A$ of $D$ such that $A'*G$ is Morita equivalent to the Auslander algebra of $D*G$.\\
  Moreover, if $G$ is commutative, there is even an induced action on the Auslander algebra $A$ of $D$ such that $A*G$ is Morita equivalent to the Auslander algebra of $D*G$.
\end{proposition}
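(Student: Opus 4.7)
The plan is to realize both Auslander algebras as endomorphism algebras of explicit modules built from the same data, and then to transport the skew-product construction through the adjunction $(I_G, R_G)$ using the isomorphism of Mart\'{i}nez-Villa already invoked earlier in the paper.

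Pick a set $\{X_o\}_{o\in\mathcal{O}}$ of representatives of the $G$-orbits on isomorphism classes of indecomposable $D$-modules, and set $H_o := H_{[X_o]}$. Form the $D*G$-module $M := \bigoplus_o I_G X_o = \bigoplus_o \field G\otimes X_o$. By Proposition~\ref{prop_IR+RI}, $R_G M\cong \bigoplus_o\bigoplus_{g\in G} gX_o$, which contains every indecomposable $D$-module as a summand, so $A' := \End_D(R_G M)^{\op}$ is Morita equivalent to the Auslander algebra $A$ of $D$. The $D*G$-structure on $M$ equips $R_G M$ with a $G$-action in the sense of Definition~\ref{def_modgact}, and this induces a $G$-action by automorphisms on $A'$. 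Applying \cite[Lemma~8]{MartinezVilla} with $B = D$ and $Y = M$, exactly as in the proof of the preceding proposition, produces an algebra isomorphism $\End_D(R_G M)*G\xrightarrow{\sim}\End_{D*G}(M\otimes \field G)$; passing to opposites, and using that for a group acting by automorphisms one has $(R*G)^{\op}\cong R^{\op}*G$, we obtain $A'*G\cong \End_{D*G}(M\otimes \field G)^{\op}$.

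Because $|G|$ is invertible in $\field$, the trivial $\field G$-module is a summand of $\field G$, whence $M$ is a summand of $M\otimes \field G\cong I_G R_G M$. The indecomposable analogue of Corollary~\ref{corollary_dividessimples}, proved by the same argument but with Krull-Remak-Schmidt applied to $R_G Y$, shows that every indecomposable $D*G$-module is a summand of $I_G X$ for some indecomposable $D$-module $X$; writing $X\cong gX_o$ for suitable $o$ and $g\in G$, and noting that $I_G(gX_o)\cong I_G X_o$, we conclude that every indecomposable $D*G$-module already appears as a summand of $M$. Thus $M\otimes \field G$ is an additive generator of $\modu(D*G)$, so $\End_{D*G}(M\otimes \field G)^{\op}$ is Morita equivalent to the Auslander algebra of $D*G$. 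Combined with the previous paragraph, this proves the first claim.

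When $G$ is commutative, Lemma~\ref{lemma_commutativegroup} permits us to choose each $X_o$ as an $H_o$-equivariant module, and we replace $M$ by $M' := \bigoplus_o \field G\otimes_{\field H_o}X_o$; decomposing $\field G$ as a free right $\field H_o$-module over the cosets yields $R_G M'\cong \bigoplus_o\bigoplus_{z\in G/H_o}zX_o$, which now contains each indecomposable $D$-module with multiplicity exactly one, so $\End_D(R_G M')^{\op}$ is literally isomorphic to $A$ (not merely Morita equivalent). The rest of the argument goes through verbatim, endowing $A$ itself with a $G$-action and producing the desired Morita equivalence between $A*G$ and the Auslander algebra of $D*G$. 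The main obstacle is the passage from the fact that $G$ acts only up to isomorphism on the set of isomorphism classes of indecomposable $D$-modules to a genuine strict $G$-action on an endomorphism algebra; this is precisely why the general case only yields a Morita equivalent $A'$ rather than $A$ itself, whereas Lemma~\ref{lemma_commutativegroup} produces coherent equivariant representatives in the commutative case and thus yields the sharper statement.
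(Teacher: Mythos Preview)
Your proof is correct and follows essentially the same route as the paper: build a $D*G$-module whose restriction is an additive generator of $\modu D$, apply \cite[Lemma~8]{MartinezVilla} to identify the skew group algebra of its $D$-endomorphism ring with a $D*G$-endomorphism ring, and then check that the latter module is an additive generator of $\modu(D*G)$; in the commutative case both you and the paper invoke Lemma~\ref{lemma_commutativegroup} to replace $I_G X_o$ by $\field G\otimes_{\field H_o}X_o$ and thereby hit the Auslander algebra on the nose. The only cosmetic difference is that the paper inducts the sum over \emph{all} indecomposables rather than over orbit representatives, which just changes multiplicities and leaves the Morita class unchanged.
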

\begin{proof}
  Let $\{X\}$ be a set of representatives of the isomorphism classes of indecomposable $D$-modules, and let $M:=\bigoplus_X X$ and $N:=I_GM=\field G\otimes M$. 
  Note that every indecomposable $A*G$-module is a summand of $I_GM=N$ by Proposition \ref{prop_IR+RI}.
  By definition, $A':=\End_D(R_GN)$ is Morita equivalent to the Auslander algebra $A$ of $D$. Moreover,  as $N$ is a $D*G$-module, $G$ acts on $A'=\End_D(R_GN)^{\op}$ via conjugation. Now by \cite[Lemma 8]{MartinezVilla}, 
  \begin{align*}
    A'*G=\End_D(R_G N)^{\op}*G\cong (\End_D(R_G N)*G)^{\op} \cong \End_{D*G}(N\otimes \field G)^{\op}.
  \end{align*}
  As every indecomposable $D*G$-module is a summand of $N$, the latter is Morita equivalent to the Auslander algebra of $D*G$.\\
  Now if $G$ is commutative, choose instead a set of representatives $\{Y\}$ for the orbits of the isomorphism classes of indecomposable $D$-modules under the $G$-action which are equipped with a $H_{[Y]}$-action, according to Lemma \ref{lemma_commutativegroup}.\\
  Then the module $N':=\bigoplus_{Y\in \{Y\}}\field G\otimes_{\field H_{[Y]}} Y$ obtains the structure of a $D*G$-module and $A:=\End_D(N')$ is isomorphic to the Auslander algebra of $D$. As before, we can use \cite[Lemma 8]{MartinezVilla} to see that 
  \begin{align*}
    A*G\cong \End_D(R_GN')^{\op}*G\cong (\End_D(R_G N')*G)^{\op} \cong \End_{D*G}(I_G(R_G N'))^{\op},
  \end{align*}
  which is Morita equivalent to the Auslander algebra of $D*G$, as every indecomposable $D*G$-module is a summand of $I_GR_GN$ by Proposition \ref{prop_IR+RI}.
\end{proof}
The example we consider arises as follows. Let $D:=\field[x]/(x^N)$ and $G=\langle g| g^n\rangle\cong \mathbb{Z}/n\mathbb{Z}$ be a cyclic group with $n$ elements and generator $g\in G$.\\ 
 Consider the $G$-action on $D$ given by $gx=\xi x$ where $\xi$ is a primitive $n$-th root of unity. Then by \cite[p.241-244]{ReitenRiedtmann} $D*G$ is a self-injective Nakayama algebra with quiver $Q$
 \begin{center}
  \begin{tikzcd}
                    & e_0 \arrow[r,"\alpha_0"] & e_1 \arrow[rd,"\alpha_1"]                &                                \\
  e_{n-1} \arrow[ru,"\alpha_{n-1}"]    &               &                               & e_2 \arrow[d,"\alpha_2"]                  \\
  e_{n-2} \arrow[u,"\alpha_{n-2}"] &               &                               & {} \arrow[ld, no head, dotted] \\
                    & {} \arrow[lu,"\alpha_{n-2}"] & {} \arrow[l, no head, dotted] &                               
  \end{tikzcd}
  \end{center}
  and relations given by all paths of length $N$, i.e. $D*G=kQ/J^N$.
  The indecomposable modules of $D$ are given by $M_j=D/(x^{N-j})$ for $0\leq j\leq N-1$, and the irreducible maps between them are given by the canonical projections and embeddings
  \begin{align*}
      \pi_{j}:M_{j}\rightarrow M_{j+1}, d+(x^{N-j})\mapsto d+(x^{N-j-1})\textup{ for }0\leq j\leq N-2\\
      \iota_{j}:M_{j} \rightarrow M_{j-1}, d+(x^{N-j})\mapsto dx+(x^{N-j+1})\textup{ for }1\leq j\leq N-1\\
  \end{align*}
  with relations 
  \begin{align*}
    \pi_{j-1}\circ \iota_{j}&=\iota_{j+1}\circ \pi_{j} \textup{ for }1< j \leq N-1\\
    \pi_{N-2}\circ \iota_{N-1}&=0.
\end{align*} 
So the Auslander algebra $A$ of $D$ has quiver $Q'$
\begin{center}
\[\begin{tikzcd}[ampersand replacement=\&]
	{M_0} \& {M_1} \& \dots \& {M_{N-2}} \& {M_{N-1}}
	\arrow["{\pi_0}", curve={height=-6pt}, from=1-1, to=1-2]
	\arrow["{\iota_1}", curve={height=-6pt}, from=1-2, to=1-1]
	\arrow[curve={height=-6pt}, dotted, from=1-3, to=1-2]
	\arrow[curve={height=-6pt}, dotted, from=1-2, to=1-3]
	\arrow["{\pi_{N-2}}", curve={height=-6pt}, from=1-4, to=1-5]
	\arrow["{\iota_{N-1}}", curve={height=-6pt}, from=1-5, to=1-4]
	\arrow[curve={height=-6pt}, dotted, from=1-3, to=1-4]
	\arrow[curve={height=-6pt}, dotted, from=1-4, to=1-3]
\end{tikzcd}\]
\end{center}
with commutator relations at every middle point $M_1, \dots M_{N-2}$ and a zero relation at $M_{N-1}$.\\
Moreover, $gM_j=M_j$ for all $0\leq j\leq N_1$, so in Proposition \ref{prop_auslanderalgebra} we may chose $N':=\sum_{i=0}^{N-1}M_i$ and obtain
that $A=\End_A(N')^{\op}$ is the Auslander algebra of $D$, and $G$ acts on it via $g(\id_{M_i})=\id_{M_i}$, $g(\pi_i)=\pi_i$ and $g(\iota_i)=\xi\iota_i$.\\
Note that since $G$ acts trivially on the primitive idempotents, $A*G$ is basic, so that it is isomorphic to the Auslander algebra of $D*G$.\\
By \cite[p.241-244]{ReitenRiedtmann}, the algebra $A*G$ has Gabriel quiver $Q'$ given by
\begin{center}
\[\begin{tikzcd}[ampersand replacement=\&]
	{\color{gray}\bullet} \& \bullet \& \dots \& \bullet \& {\color{gray}\bullet} \\
	{\color{gray}\bullet} \& \bullet \& \dots \& \bullet \& {\color{gray}\bullet} \\
	\color{gray}\dots \& \dots \& \dots \& \dots \& {\color{gray}\dots} \\
	{\color{gray}\bullet} \& \bullet \& \dots \& \bullet \& {\color{gray}\bullet}
	\arrow[from=1-4, to=2-4]
	\arrow[from=2-4, to=3-4]
	\arrow[from=3-4, to=4-4]
	\arrow[from=1-3, to=2-3]
	\arrow[from=2-3, to=3-3]
	\arrow[from=2-2, to=3-2]
	\arrow[from=3-2, to=4-2]
	\arrow[from=3-3, to=4-3]
	\arrow[draw={rgb,255:gray,214;green,92;blue,92}, from=1-1, to=2-1]
	\arrow[draw={rgb,255:gray,214;green,92;blue,92}, from=3-1, to=4-1]
	\arrow[draw={rgb,255:gray,214;green,92;blue,92}, from=2-1, to=3-1]
	\arrow[dotted, from=4-2, to=3-1]
	\arrow[dotted, from=3-2, to=2-1]
	\arrow[from=2-2, to=1-1]
	\arrow[from=1-2, to=2-2]
	\arrow[dotted, from=4-3, to=3-2]
	\arrow[dotted, from=3-3, to=2-2]
	\arrow[dotted, from=2-3, to=1-2]
	\arrow[dotted, from=2-4, to=1-3]
	\arrow[dotted, from=3-4, to=2-3]
	\arrow[dotted, from=4-4, to=3-3]
	\arrow[dotted, from=4-5, to=3-4]
	\arrow[dotted, from=3-5, to=2-4]
	\arrow[from=2-5, to=1-4]
	\arrow[draw={rgb,255:gray,214;green,92;blue,92}, from=1-5, to=2-5]
	\arrow[draw={rgb,255:gray,214;green,92;blue,92}, from=2-5, to=3-5]
	\arrow[draw={rgb,255:gray,214;green,92;blue,92}, from=3-5, to=4-5]
\end{tikzcd}\]
\end{center}
where the last column is identified with the first column, so that $Q'$ becomes a cylinder, with relations given by commutator relations in every parallelogram
\[\begin{tikzcd}[ampersand replacement=\&]
	\bullet \\
	\bullet \& \bullet \\
	\& \bullet
	\arrow[from=2-2, to=3-2]
	\arrow[from=3-2, to=2-1]
	\arrow[from=2-2, to=1-1]
	\arrow[from=1-1, to=2-1]
\end{tikzcd}\]
and a zero relation given between neighbouring points in the last row
\[\begin{tikzcd}[ampersand replacement=\&]
	\bullet \\
	\bullet \& \bullet
	\arrow[from=2-2, to=1-1]
	\arrow[from=1-1, to=2-1]
\end{tikzcd}\]
Now consider $A$. The projective $P_i$ at $M_i$ is given by 
\begin{center}
\[\begin{tikzcd}[ampersand replacement=\&]
	\&\&\& {M_i} \\
	\&\& {M_{i-1}} \&\& {M_{i+1}} \\
	\& \dots \&\& {M_i} \&\& \dots \\
	{M_0} \&\& \dots \&\& \dots \&\& {M_{N-1}} \\
	\& {M_1} \&\& \dots \&\& {M_{N-2}} \\
	{M_0} \&\& \dots \&\& \dots \\
	\& \dots \&\& \dots \\
	\dots \&\& \dots \\
	\& {M_1} \\
	{M_0}
	\arrow[from=1-4, to=2-3]
	\arrow[from=2-5, to=3-4]
	\arrow[from=1-4, to=2-5]
	\arrow[dotted, from=2-5, to=3-6]
	\arrow[dotted, from=3-6, to=4-7]
	\arrow[from=4-7, to=5-6]
	\arrow[from=2-3, to=3-4]
	\arrow[dotted, from=2-3, to=3-2]
	\arrow[dotted, from=3-4, to=4-3]
	\arrow[dotted, from=3-2, to=4-3]
	\arrow[from=4-1, to=5-2]
	\arrow[from=5-2, to=6-1]
	\arrow[dotted, from=3-2, to=4-1]
	\arrow[dotted, from=4-3, to=5-2]
	\arrow[dotted, from=4-3, to=5-4]
	\arrow[dotted, from=5-6, to=6-5]
	\arrow[dotted, from=5-2, to=6-3]
	\arrow[dotted, from=5-4, to=6-5]
	\arrow[dotted, from=6-3, to=7-4]
	\arrow[dotted, from=6-1, to=7-2]
	\arrow[dotted, from=8-1, to=9-2]
	\arrow[from=9-2, to=10-1]
	\arrow[from=8-3, to=9-2]
	\arrow[dotted, from=7-2, to=8-3]
	\arrow[dotted, from=6-5, to=7-4]
	\arrow[dotted, from=7-4, to=8-3]
	\arrow[dotted, from=7-2, to=8-1]
	\arrow[dotted, from=3-4, to=4-5]
	\arrow[dotted, from=4-5, to=5-6]
	\arrow[dotted, from=6-3, to=7-2]
	\arrow[dotted, from=5-4, to=6-3]
	\arrow[dotted, from=4-5, to=5-4]
	\arrow[dotted, from=3-6, to=4-5]
\end{tikzcd}\]
\end{center}
If we define the partial order $M_i\leq M_j$ if and only if $i\leq j$, then we obtain that $A$ is quasi-hereditary with standard modules given by 
\begin{center}
\[\begin{tikzcd}[ampersand replacement=\&]
	\&\&\&\& {M_i} \\
	\&\&\& {M_{i-1}} \\
	\&\& \dots \\
	\& {M_1} \\
	{M_0}
	\arrow["{\pi_i}", from=1-5, to=2-4]
	\arrow[dotted, from=2-4, to=3-3]
	\arrow[dotted, from=3-3, to=4-2]
	\arrow["{\pi_2}", from=4-2, to=5-1]
\end{tikzcd}\]
\end{center}
where $P_i$ has standard filtration 
\begin{center}
\[\begin{tikzcd}[ampersand replacement=\&]
	{\Delta_i} \\
	\& {\Delta_{i+1}} \\
	\&\& \dots \\
	\&\&\& {\Delta_N}
	\arrow[from=1-1, to=2-2]
	\arrow[dotted, from=2-2, to=3-3]
	\arrow[dotted, from=3-3, to=4-4]
\end{tikzcd}\]
\end{center}
 Moreover, note that the subalgebra $B$ of $A$ given by the quiver 
\[\begin{tikzcd}[ampersand replacement=\&]
	{M_{0}} \& {M_1} \& \dots \& {M_{N-1}}
	\arrow["{\pi_0}"', from=1-1, to=1-2]
	\arrow[dotted, from=1-3, to=1-4]
	\arrow[dotted, from=1-2, to=1-3]
\end{tikzcd}\]
is directed. Additionally $A$ has a vector space basis consisting of
\begin{align*}
	\iota_{i-k}\circ\dots\circ \iota_{i}\circ \id_{M_i}\circ \pi_{i-1}\circ \dots \circ \pi_{i-j}\textup{ for }0\leq i \leq N-1; 0\leq j, k \leq i.
\end{align*}
Thus, as a right $B$-module
 $$A_{|B}=\bigoplus_{i=0}^{N-1}\bigoplus_{k=0}^{i}\iota_{i-k}\circ\dots\circ \iota_i\circ \id_{M_i} B\cong \bigoplus_{i=0}^{N-1}\bigoplus_{j=i}^{N-1}e_{i}B$$ is projective, and 
 \begin{align*}
	A\otimes_B L_{M_i}\cong A\id_{M_i}/(A\rad(B)\id_{M_i})\cong P_{M_i}/A\pi_i=\Delta_{i}.
 \end{align*}
 Thus $B$ is an exact Borel subalgebra of $A$.\\
 Now, since $G$ acts trivially on the simples of $A$, $\leq_A$ is automatically $G$-equi\-va\-riant. Hence we obtain by Proposition \ref{prop_bijectionorders} an induced partial order $\leq_{A*G}$ on $A*G$, such that by Theorem \ref{thm_ghiff} $A*G$ is \puhh. Moreover, by Theorem \ref{thm_Borel} it has an exact Borel subalgebra given by $B*G$. Using our explicit description of $B$, and the fact that $G$ acts trivially on $B$, we obtain that $B*G$ is the subalgebra of $A*G$ given by the subquiver 
 \begin{center}
\[\begin{tikzcd}[ampersand replacement=\&]
	{\color{gray}\bullet} \& \bullet \& \dots \& \bullet \& {\color{gray}\bullet} \\
	{\color{gray}\bullet} \& \bullet \& \dots \& \bullet \& {\color{gray}\bullet} \\
	\color{gray}\dots \& \dots \& \dots \& \dots \& {\color{gray}\dots} \\
	{\color{gray}\bullet} \& \bullet \& \dots \& \bullet \& {\color{gray}\bullet}
	\arrow[from=1-4, to=2-4]
	\arrow[from=2-4, to=3-4]
	\arrow[from=3-4, to=4-4]
	\arrow[dotted, from=1-3, to=2-3]
	\arrow[dotted, from=2-3, to=3-3]
	\arrow[from=2-2, to=3-2]
	\arrow[from=3-2, to=4-2]
	\arrow[dotted, from=3-3, to=4-3]
	\arrow[draw={rgb,255:gray,214;green,92;blue,92}, from=1-1, to=2-1]
	\arrow[draw={rgb,255:gray,214;green,92;blue,92}, from=3-1, to=4-1]
	\arrow[draw={rgb,255:gray,214;green,92;blue,92}, from=2-1, to=3-1]
	\arrow[from=1-2, to=2-2]
	\arrow[draw={rgb,255:gray,214;green,92;blue,92}, from=1-5, to=2-5]
	\arrow[draw={rgb,255:gray,214;green,92;blue,92}, from=2-5, to=3-5]
	\arrow[draw={rgb,255:gray,214;green,92;blue,92}, from=3-5, to=4-5]
\end{tikzcd}\]
 \end{center}
of $Q'$. Moreover, if $\iota_B: B\rightarrow A$ is the canonical embedding,  $I\subset A$ is the ideal generated by $\iota_1, \dots, \iota_{N-1}$ and $\pi_I:A\rightarrow A/I$ is the canonical projection, then $\pi_I\circ \iota_B$ is an isomorphism, so that $\iota_B$ admits a splitting with kernel $I$.
Hence $B$ is normal in $A$, so that $B*G$ is normal in $A*G$ by Theorem \ref{thm_Borel}.\\
However, note that for $N\geq 3$ the extension of $\Delta_1$ and $\Delta_2$ given by 
\begin{center}
\[\begin{tikzcd}[ampersand replacement=\&]
	\& {M_1} \\
	{M_0} \&\& {M_2} \\
	\& {M_1} \\
	{M_0}
	\arrow[from=1-2, to=2-3]
	\arrow[from=1-2, to=2-1]
	\arrow[from=2-3, to=3-2]
	\arrow[from=3-2, to=4-1]
	\arrow[from=2-1, to=3-2]
\end{tikzcd}\]
\end{center}
has a submodule 
\begin{center}
\[\begin{tikzcd}[ampersand replacement=\&]
	{M_0} \&\& {M_2} \\
	\& {M_1} \\
	{M_0}
	\arrow[from=1-3, to=2-2]
	\arrow[from=2-2, to=3-1]
	\arrow[from=1-1, to=2-2]
\end{tikzcd}\]
\end{center}
which is an extension of $\Delta_0$ and $\Delta_2$. Hence $\Ext^1_A(\Delta_0, \Delta_2)\neq (0) =\Ext^1_B(S_0, S_2)$, so that $B$ is not regular. 
For the case $N=2$, $B$ is a regular exact Borel subalgebra as seen in \cite[Example A1]{KKO}.
\bibliography{cat2bibliography.bib}
\end{document}